\documentclass{article}

\usepackage{amsmath,amssymb,amsfonts,amsthm,extarrows}
\usepackage{mathtools}
\usepackage{enumitem}
\usepackage{stmaryrd}
\usepackage{tikz-cd} 
\usepackage{bbm}
\usepackage[english]{babel}
\usepackage[nottoc]{tocbibind}

\usepackage{blindtext}
\title{Stability of elliptic Fargues--Scholze $L$-packets}
\author{Chenji Fu}
\date{}

\usepackage{fancyhdr}

\usepackage{color}

\usepackage{nameref}

\usepackage{graphicx}
\graphicspath{ {./images/} }

\usepackage{soul}

\usepackage[colorinlistoftodos,textsize=footnotesize]{todonotes}
\setlength{\marginparwidth}{2.5cm}

\usepackage[colorlinks]{hyperref}

\newtheorem{theorem}{Theorem}[subsection]
\newtheorem{remark}[theorem]{Remark}
\newtheorem{lemma}[theorem]{Lemma}

\newtheorem{proposition}[theorem]{Proposition}
\newtheorem{definition}[theorem]{Definition}
\newtheorem{corollary}[theorem]{Corollary}

\newtheorem{conjecture}[theorem]{Conjecture}
\newtheorem{assumption}[theorem]{Assumption}

\newcommand{\Hom}{\operatorname{Hom}}
\newcommand{\Rep}{\operatorname{Rep}}
\newcommand{\End}{\operatorname{End}}

\newcommand{\Irr}{\operatorname{Irr}}

\newcommand{\Perf}{\operatorname{Perf}}
\newcommand{\Spec}{\operatorname{Spec}}

\newcommand{\Gal}{\operatorname{Gal}}
\newcommand{\Ql}{{\overline{\mathbb{Q}}_{\ell}}}

\newcommand{\Cent}{\operatorname{Cent}}
\newcommand{\ZG}{{Z(\widehat{G})^{\Gamma}}}
\newcommand{\Oo}{{\mathcal{O}(S_{\varphi})_0}}
\newcommand{\elliptic}{\operatorname{ell}}
\newcommand{\Rel}{\operatorname{Rel}}
\newcommand{\character}{\operatorname{Char}}
\newcommand{\basic}{\operatorname{basic}}
\newcommand{\Bun}{\operatorname{Bun}}
\newcommand{\OS}{{\mathcal{O}(S_{\varphi})}}
\newcommand{\Mant}{\operatorname{Mant}}

\newcommand{\inv}{\operatorname{inv}}
\newcommand{\sr}{\operatorname{sr}}

\newcommand{\LPhi}{{\Lambda_{\widehat{\Phi}}}}
\newcommand{\tr}{\operatorname{tr}}

\newcommand{\OSZ}{{\mathcal{O}\left(S_{\varphi}/Z(\widehat{G})^{\Gamma}\right)}}
\newcommand{\rs}{\operatorname{rs}}
\newcommand{\pr}{\operatorname{pr}}
\newcommand{\ULA}{\operatorname{ULA}}
\newcommand{\id}{\operatorname{id}}
\newcommand{\FS}{\operatorname{FS}}

\newcommand{\ZoneG}{{Z^1(W_F, \widehat{G})/\widehat{G}}}

\newcommand{\Tuniv}{\mathbb{T}_{\operatorname{univ}}}

\begin{document}

	\maketitle

	\begin{abstract}
		Let $F$ be a non-archimedean local field. Let $\overline{F}$ be an algebraic closure of $F$. Let $G$ be a connected reductive group over $F$. Let $\varphi$ be an elliptic $L$-parameter. 
		For every irreducible representation $\pi$ of $G(F)$ with Fargues--Scholze $L$-parameter $\varphi$, 
		we prove that there exists a finite set of irreducible representations $\{\pi_i\}_{i \in I}$ containing $\pi$, such that $\pi_i$ has Fargues--Scholze $L$-parameter $\varphi$ for all $i \in I$ and a certain non-zero $\mathbb{Z}$-linear combination $\Theta_{\pi_0}$ of the Harish-Chandra characters of $\{\pi_i\}_{i \in I}$ is stable under $G(\overline{F})$ conjugation, as a function on the elliptic regular semisimple elements of $G(F)$. Moreover, if $F$ has characteristic zero, $\Theta_{\pi_0}$ is a non-zero stable distribution on $G(F)$.
	\end{abstract}

	\tableofcontents

	\section{Introduction}
	The local Langlands correspondence conjecturally partitions the irreducible representations of a $p$-adic group into the so-called $L$-packets. Such a partition is conjecturally characterized by the stability condition, which has been proven in many cases using the theory of endoscopy. The aim of this paper is to approach the stability from a new point of view using \cite{fargues2021geometrization}, which geometrizes the representations as sheaves on $\Bun_G$. Given an irreducible representation of $G(F)$, we define a sheaf  on $\Bun_G$ by averaging over the automorphisms of the corresponding $L$-parameter, which we observe is a Hecke eigensheaf.
	Combining this with a formula of \cite{hansen2022kottwitz}, we reduce the problem of stability to showing equi-distribution properties of the weight multiplicities of highest weight representations of an algebraic group. Our proof of equi-distribution properties might be of independent interest (Subsection \ref{Section_equi_dist}).
	
	To explain our results in more details, let $F$ be a non-archimedean local field and $G$ be a connected reductive $F$-group with Langlands dual group $\widehat{G}$ over $\Ql$, which comes canonically with a torus and a Borel $\widehat{T} \subseteq \widehat{B} \subseteq \widehat{G}$. Let $\Tuniv$ be the universal Cartan of $G$ (see \cite[Section VI. 11]{fargues2021geometrization}). The cocharacter lattice of $\Tuniv$ is canonically isomorphic to the character lattice of $\widehat{T}$:
	$$X_*(\Tuniv) \cong X^*(\widehat{T}).$$ To state our theorem for general $G$, we need to take care of all the extended pure inner forms 
	of $G$. For simplicity, let us make the following assumption in this introduction. 
	\begin{assumption}\label{Assumption_intro}
		In this introduction, assume that $G$ is $F$-split, semisimple, and simply connected.
	\end{assumption}
	
	Let $W_F$ be the Weil group of $F$. 
	The local Langlands conjecture predicts that there exists a surjective finite-to-one map from the set of  equivalence classes of irreducible  smooth representations of $G(F)$ to the set of \emph{$L$-parameters}, i.e., $\widehat{G}$-conjugacy classes of $\ell$-adically continuous $1$-cocycles $\varphi: W_F \to \widehat{G}(\Ql)$ (\cite[Section 1.1]{dhkm2020moduli}). The fiber over $\varphi$ is called an \emph{$L$-packet}, denoted by $\Pi_{\varphi}(G)$. This conjectural association is expected to satisfy a list of properties, one of which is the stability of $L$-packets.
	
	\begin{conjecture}[{\cite[Conjecture 2.2]{kaletha2022representations}, see \cite[Conjecture 6.8]{kalethalocal} for a more precise form}]\label{Conj_stab}
		Any discrete series $L$-packet is \textbf{stable}, i.e., there exists a linear combination of the Harish-Chandra characters of its members that is a stable distribution.
	\end{conjecture}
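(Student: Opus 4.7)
The plan is to establish stability by geometrizing Harish-Chandra characters as traces of Hecke operators acting on a carefully constructed Hecke eigensheaf on $\Bun_G$, following the strategy hinted at in the introduction. First, starting from an irreducible representation $\pi$ of $G(F)$ with Fargues--Scholze parameter $\varphi$, I would form a sheaf $\mathcal{A}_\pi$ on $\Bun_G$ by averaging the Fargues--Scholze sheaf attached to $\pi$ over the centralizer $S_\varphi$ of $\varphi$. The resulting sheaf decomposes as a finite $\mathbb{Z}$-linear combination of Fargues--Scholze sheaves attached to irreducible representations $\{\pi_i\}_{i \in I}$ that all share the parameter $\varphi$. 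The crucial point is that although an individual $\pi_i$ need not give a Hecke eigensheaf under the Fargues--Scholze spectral action, the $S_\varphi$-averaged sheaf $\mathcal{A}_\pi$ does; this is because averaging over $S_\varphi$ precisely kills the obstruction arising from the $S_\varphi$-action on the spectral side of the geometrization.

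Once $\mathcal{A}_\pi$ has been identified as a Hecke eigensheaf with eigenvalue $\varphi$, I would invoke the character formula of \cite{hansen2022kottwitz}. For a highest weight representation $V_\mu$ of $\widehat{G}$, this formula expresses the stalks of $T_{V_\mu} \mathcal{A}_\pi$ over the elliptic basic locus of $\Bun_G$ in terms of the Harish-Chandra characters $\Theta_{\pi_i}$ restricted to elliptic regular semisimple elements of $G(F)$, weighted by the weight multiplicities $\dim V_\mu^\lambda$. The eigensheaf property forces this stalk to coincide (up to normalization) with $\operatorname{tr}(\varphi(\mathrm{Frob}) \mid V_\mu) \cdot \mathcal{A}_\pi$; since this eigenvalue depends only on the $\widehat{G}$-conjugacy class of $\varphi$ and not on any inner-form refinement, $G(\overline{F})$-conjugation stability of the resulting combination $\Theta_{\pi_0}$ is automatic on the elliptic regular semisimple locus.

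The main obstacle is ensuring that the combination $\Theta_{\pi_0}$ so produced is nonzero. A priori, the $S_\varphi$-averaging could annihilate all test pairings with Hecke operators if unfortunate cancellations conspired uniformly. This is precisely where the equi-distribution of weight multiplicities developed in Subsection~\ref{Section_equi_dist} enters: as $\mu$ ranges over dominant coweights, the multiplicities $\dim V_\mu^\lambda$ spread sufficiently that no nontrivial rational linear combination of the $\Theta_{\pi_i}$ can be simultaneously killed by all $T_{V_\mu}$. I would expect this to be the technical heart of the argument, and to require delicate asymptotic control (likely via saturation or Kostant-type bounds) rather than a formal representation-theoretic identity. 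Combined with the eigensheaf identity, this yields an explicit nonzero stable combination on elliptic regular semisimple elements.

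Finally, to upgrade stability from the elliptic regular semisimple locus to a bona fide stable distribution on $G(F)$ when $\operatorname{char}(F) = 0$, I would appeal to Harish-Chandra's theorem that characters of admissible representations are represented by locally integrable functions smooth on the regular semisimple locus, together with the density of elliptic regular semisimple orbital integrals among stable distributions generated by characters of the elliptic packet. The positive-characteristic case is left at the pointwise level because these analytic tools are not yet fully available there.
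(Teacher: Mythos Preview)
First, note that the statement you are addressing is a \emph{conjecture} in the paper and is not proved there in full generality; what the paper actually establishes is Theorem~\ref{Thm_main}, concerning elliptic Fargues--Scholze $L$-packets. Your sketch is really aimed at that theorem, so I compare against the paper's argument for it. The overall architecture---average to get a Hecke eigensheaf, then feed into the Hansen--Kaletha--Weinstein formula---matches the paper. But you have interchanged the roles of two of the key ingredients, and this is a genuine gap.

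The Hecke eigenvalue is not $\tr(\varphi(\operatorname{Frob})\mid V_\mu)$. Averaging is by the regular representation $\OS$ of $S_\varphi$, and the relevant fact (Lemma~\ref{Lemma:regular_rep}) is that $W\otimes\OS\cong\OS^{\oplus\dim W}$ for any finite-dimensional $S_\varphi$-module $W$; hence $T_V(\mathcal{F})\cong\mathcal{F}^{\oplus\dim V}$ with eigenvalue $\dim V$. Stability is therefore \emph{not} automatic from the eigensheaf property: combining the eigenvalue $\dim V_{\mu}$ with the HKW formula yields only the identity~\eqref{Equation_explicit}, which expresses $\Theta_{\pi_0}(g)$ as a weighted average of $\Theta_{\pi_0}(g')$ over $g'\in[[g]]$, with weights that a priori depend on $g'$ through $\inv(g,g')$. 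The equi-distribution of weight multiplicities (Subsection~\ref{Section_equi_dist}) is exactly what forces these weights to become independent of $g'$ as $\mu$ runs along $\mu_m=4m\rho_G$, and this is how stability is deduced---not nonvanishing. Nonvanishing is handled separately and cheaply (Lemma~\ref{Lem_nonzero}): the trivial summand of $\OSZ$ contributes $\pi$ itself in degree zero, and a short boundedness argument with iterated tensor powers of $\OSZ$ rules out cancellation from other degrees. Finally, the upgrade to a stable distribution in characteristic zero is via Arthur's result on elliptic virtual characters (\cite[Theorem~6.1]{arthur1996local}), not via density of elliptic orbital integrals.
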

	
	Explicit constructions of the local Langlands correspondence are known in many special cases and Conjecture \ref{Conj_stab} is proven in some of those cases (for example,
	\cite{arthur2013endoscopic},
	 \cite{adler2009supercuspidal}, \cite{debacker2009depth},
	\cite{debacker2018stability},
	\cite{fintzen2023twisted},
	\cite{gan2011local},
	 \cite{kaletha2019regular}, 
	 \cite{kazhdan2006endoscopic},
	 \cite{spice2018explicit} and \cite{spice2021explicitii}). All these proofs rely on some deep results from the theory of endoscopy.

	Using a geometric approach, Fargues--Scholze \cite{fargues2021geometrization} attached a semisimplified $L$-parameter $\varphi_{\pi'}^{\FS}$ to any smooth irreducible representation $\pi'$ of $G(F)$, giving a general candidate for the local Langlands correspondence. For a semisimplified $L$-parameter $\varphi$, we define the \emph{Fargues--Scholze $L$-packet} of $\varphi$ to be
	\begin{equation}\label{Eq_FS_L-pacekt}
		\Pi_{\varphi}^{\operatorname{FS}}(G):=\{\pi' \in \Irr_{\Ql}G(F)\;|\; \varphi_{\pi'}^{\operatorname{FS}}=\varphi\}.
	\end{equation}

	Moreover, they constructed the so-called \emph{spectral action} (\cite[Chapter X]{fargues2021geometrization}), denoted by $*$. Let $\varphi$ be an elliptic $L$-parameter (Definition \ref{Def:elliptic}). The spectral action in particular gives an action of the derived category of perfect complexes of $S_{\varphi}$-representations on the derived category of $G(F)$-representations, where $S_{\varphi}$ is the centralizer of $\varphi$.  For every $\pi \in \Irr_{\Ql}G(F)$ such that $\varphi_{\pi}^{\FS}=\varphi$ (note that it is not known whether such $\pi$ exists or not),
	let us define 
	$$\pi_0:=\OS*\pi,$$ 
	where $\OS$ is the regular representation of $S_{\varphi}$ (see (\ref{Eq_def_pi}) for the definition of $\pi_0$ for general $G$). $\pi_{0}$ is an object in the derived category of $G(F)$-representations, which turns out to be a finite direct sum of irreducible representations with Fargues--Scholze $L$-parameter $\varphi$ up to degree shifts 
	(Proposition \ref{Proposition_elliptic}). We define its \emph{Harish-Chandra character} $\Theta_{\pi_0}$ as the alternating sum of the Harish-Chandra characters of the cohomologies. $\Theta_{\pi_0}$ is thus a $\mathbb{Z}$-linear combination of Harish-Chandra characters of members of the Fargues--Scholze $L$-packet of $\varphi$, as required in Conjecture \ref{Conj_stab}. Moreover, we show that $\Theta_{\pi}$ occurs in $\Theta_{\pi_{0}}$ with a non-zero coefficient (Lemma \ref{Lem_nonzero}), hence $\Theta_{\pi_{0}}$ is non-zero.
	
	The main result we prove in our paper is:
	\begin{theorem}\label{Thm_main}(Theorem \ref{Thm_main_general} and \ref{Thm_main_distribution})
		Let $\varphi$ be an elliptic $L$-parameter. For every $\pi \in \Irr_{\Ql}G(F)$ such that $\varphi_{\pi}^{\FS} = \varphi$, the Harish-Chandra character $\Theta_{\pi_0}$ of $\pi_0:=\OS*\pi$ is stable, i.e. $\Theta_{\pi_0}$
		is invariant under $G(\overline{F})$-conjugacy as a non-zero function on the elliptic regular semisimple elements $G(F)_{\elliptic}$ of $G(F)$. Moreover, if $F$ has characteristic zero, then $\Theta_{\pi_0}$ is a non-zero stable distribution on $G(F)$.
	\end{theorem}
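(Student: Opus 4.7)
The overall strategy is to identify $\pi_0$ with a geometric object on $\Bun_G$ whose character is controlled by the spectral side of the Fargues--Scholze geometrization, and then to read off stability from the fact that this spectral data manifestly depends only on the $\widehat{G}$-conjugacy class of $\varphi$. I would begin by unpacking $\pi_0 = \OS * \pi$: decomposing the regular representation of $S_{\varphi}$ as $\OS \cong \bigoplus_{\rho \in \Irr S_{\varphi}} \rho \otimes \rho^\vee$, the cohomologies of $\pi_0$ are direct sums of members of the Fargues--Scholze $L$-packet $\Pi_{\varphi}^{\FS}(G)$ (Proposition \ref{Proposition_elliptic}), and the sheaf on $\Bun_G$ attached to $\pi_0$ becomes a Hecke eigensheaf with eigenvalue $\varphi$, because averaging by $\OS$ under the spectral action precisely kills the $S_{\varphi}$-equivariance obstruction.

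Next I would apply the Kottwitz-type formula of \cite{hansen2022kottwitz}, valid for Hecke eigensheaves, to express $\Theta_{\pi_0}(\gamma)$ at $\gamma \in G(F)_{\elliptic}$ as a sum indexed by basic elements $b \in B(G)$, with each summand built from a trace of a highest-weight representation of $\widehat{G}$ composed with $\varphi$, paired against the Kottwitz invariant of $b$ and a local term at $\gamma$. Since each such summand depends only on the $\widehat{G}$-conjugacy class of $\varphi$ and on the stable conjugacy class of $\gamma$, this reduces $G(\overline{F})$-conjugation invariance of $\Theta_{\pi_0}$ to showing that the sum over extended pure inner forms collapses, i.e.\ that the weight multiplicities of highest-weight representations of $\widehat{G}$, when paired with Kottwitz invariants and summed against the regular representation of $S_{\varphi}$, are equi-distributed in a precise sense.

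The main obstacle is this equi-distribution statement, announced in Subsection \ref{Section_equi_dist}. For an elliptic $\varphi$ the centralizer $S_{\varphi}$ is finite modulo $Z(\widehat{G})^{\Gamma}$, and averaging a character of $\widehat{G}$ over $S_{\varphi}$ via the spectral action should force the weights of $V_\mu$ restricted to $\widehat{T}$ to distribute uniformly across the cosets indexing extended pure inner forms. I would attack this by combining the Weyl character formula with ellipticity: the finiteness of $S_{\varphi}/Z(\widehat{G})^{\Gamma}$ controls how many characters of $Z(\widehat{G})^{\Gamma}$ survive in the averaging, and a careful bookkeeping using the identification $X_*(\Tuniv) \cong X^*(\widehat{T})$ converts the identity into a purely combinatorial statement about weight orbits that one can verify by decomposing along the cocharacter lattice.

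Finally, to upgrade from a stable function on $G(F)_{\elliptic}$ to a stable distribution on all of $G(F)$ when $\operatorname{char} F = 0$, I would invoke Harish-Chandra's regularity theorem, which guarantees that $\Theta_{\pi_0}$ is a locally integrable function, locally constant on the regular semisimple locus. Since $\varphi$ is elliptic, the members of $\Pi_{\varphi}^{\FS}(G)$ are essentially discrete series, so a standard argument — combining Harish-Chandra's local expansion of characters around a semisimple element with the fact that an essentially square-integrable virtual character is determined by its restriction to elliptic regular semisimple elements — promotes stability of the function on $G(F)_{\elliptic}$ to stability as a distribution on $G(F)$. Non-vanishing is immediate from Lemma \ref{Lem_nonzero}.
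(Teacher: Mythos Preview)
Your proposal has the right high-level architecture (Hecke eigensheaf property, then the Hansen--Kaletha--Weinstein formula, then an equi-distribution input), but the central mechanism is misidentified, and one essential ingredient is missing.

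First, the formula of \cite{hansen2022kottwitz} does not produce traces of highest-weight representations composed with $\varphi$. What it gives (via the Hecke eigensheaf property) is the identity
\[
\dim(V_{\mu})\,\Theta_{\pi_0}(g)=\sum_{g'\in[[g]]}\Bigl(\sum_{\lambda:\ \overline{\lambda}=\inv(g,g')}\dim V_{\mu}[\lambda]\Bigr)\Theta_{\pi_0}(g'),
\]
so the right-hand side is a weighted sum of the \emph{unknown} values $\Theta_{\pi_0}(g')$ over the stable class, with coefficients that are pure weight-multiplicity counts depending on $\inv(g,g')$. Nothing here depends on $\varphi$ beyond the eigensheaf property, and the summands certainly do not a priori depend only on the stable class of $\gamma$; that is exactly what must be proved.

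Second, and this is the real gap: for a fixed $\mu$ those coefficients are \emph{not} equal across $g'\in[[g]]$, and no averaging over $S_{\varphi}$ makes them so. The paper's key idea, which your proposal omits, is to take $\mu=\mu_m=4m\rho_G$ and let $m\to\infty$: one shows via the Weyl character formula that the normalized coefficients converge to $1/|H_g|$ independently of $\inv(g,g')$, whence stability follows by passing to the limit in the displayed identity. The finite group governing this is $H_g=\ker(X_*(T_g)_{\Gamma}\to\pi_1(G)_{\Gamma})$, which is finite because $T_g$ is \emph{elliptic}; the finiteness of $S_{\varphi}/Z(\widehat{G})^{\Gamma}$ plays no role in the equi-distribution step. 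Your proposed mechanism (``averaging over $S_{\varphi}$ forces uniform distribution of weights'') conflates two unrelated finite groups and does not yield the needed statement.

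Your final paragraph on upgrading to a stable distribution in characteristic zero is essentially correct in spirit; the paper cites \cite[Theorem~6.1]{arthur1996local} for the precise statement that an elliptic virtual character which is stable on $G(F)_{\elliptic}$ is stable as a distribution.
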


	\subsection{Sketch of the proof}

	Let us first recall some relevant results of \cite{fargues2021geometrization}. We refer to Subsection \ref{Subsection_FS} for details. Let $\Bun_G$ be the stack of $G$-bundles over the Fargues--Fontaine curve. Let $\operatorname{D}(\Bun_G)$ be the derived category of sheaves on $\Bun_G$ and $\operatorname{D}(\Bun_G)^{\omega}$ its full subcategory of compact objects. Let $Z^1(W_F, \widehat{G})/\widehat{G}$ be the stack of $L$-parameters. Let $\Perf(Z^1(W_F, \widehat{G})/\widehat{G})$ be the category of perfect complexes on $\ZoneG$. Fargues and Scholze defined an action of $\Perf(\ZoneG)$ on $\operatorname{D}(\Bun_G)^{\omega}$, which is called the \emph{spectral action}.

	Note that we have an open and closed immersion $i_{\varphi}: [*/S_{\varphi}] \to \ZoneG$ (\cite[Section X.2]{fargues2021geometrization}) and an open immersion $i_1: [*/G(F)] \to \Bun_G$. For  a representation $\rho'$ of $S_{\varphi}$ and a finitely generated smooth admissible representation $\pi'$ of $G(F)$, we denote  
	$$\rho' * \pi':=i_1^*\left((i_{\varphi*}\rho') * (i_{1!}\pi')\right) \in \operatorname{D}([*/G(F)], \Ql) \cong \operatorname{D}(\Rep_{\Ql} G(F)),$$
	where the $*$ on the right hand side denotes the spectral action.

		As we have already mentioned,  
			$$\pi_0:=\OS*\pi \in \operatorname{D}(\Rep_{\Ql}G(F))$$ 
			is a derived representation of $G(F)$.
		Moreover, denote $$\mathcal{F}_0:=(i_{\varphi*}\OS) * (i_{1!}\pi).$$ 
		In fact, $\mathcal{F}_0$ is supported on the open substack $[*/G(F)] \subseteq \Bun_G$ (Lemma \ref{Lemma:support}), so 
		$$\pi_0 = i_{1}^*\mathcal{F}_0 \qquad \mathcal{F}_0 \cong i_{1!}\pi_0.$$
		Without further mention, we will always identify $$\operatorname{D}(G(F), \Ql) := \operatorname{D}(\Rep_{\Ql}G(F))=\operatorname{D}([*/G(F)], \Ql)$$ with its image in $\operatorname{D}(\Bun_G)$ via $i_{1, !}$. We will later abuse the notations and do not distinguish between $\pi_0$ and $\mathcal{F}_0$.

	To show that $\Theta_{\pi_0}$ is stable, we observe in Lemma \ref{Prop:Hecke_eigen} that $\mathcal{F}_0$ is a Hecke eigensheaf on $\Bun_G$, which means the following. 
	For any algebraic representation $V$ of $\widehat{G}$, Fargues and Scholze (\cite[Section V]{fargues2021geometrization}) constructed an operator
	$T_V: \operatorname{D}(\Bun_G) \to \operatorname{D}(\Bun_G)$. These are called the \emph{Hecke operators}. A sheaf on $\Bun_G$ is called a \emph{Hecke eigensheaf} if for any algebraic representation $V$ of $\widehat{G}$, 
	$$T_V(\mathcal{F})=\mathcal{F}^{\oplus \dim V}.$$

	Recall that our goal is to show that the Harish-Chandra character $\Theta_{\pi_0}$ of $\pi_0=\OS*\pi$ is stable, where $*$ is the spectral action.
	\begin{enumerate}
		\item Let $g \in G(F)_{\elliptic}$ be an elliptic regular semisimple element.
		\item Let $T_g:=\Cent(g, G)$ be the centralizer of $g$ in $G$, which is an elliptic $F$-torus. We choose a Borel subgroup over $\overline{F}$ containing $(T_g)_{\overline{F}}$. Thus 
		$$X_*(T_g):=\Hom((T_g)_{\overline{F}}, \; \mathbb{G}_{m, \overline{F}})$$ 
		is canonically identified with $X_*(\Tuniv)$ with a distinguished notion of dominance. We will denote by $$X_*:=X_*(\Tuniv) \cong X_*(T_g)$$ 
		for the cocharacters and similarly $X_*^+$ for the dominant cocharacters.
	\end{enumerate}
	For any $g_1 \in G(F)$, let 
	$$[[g_1]]:=\{g_1' \in G(F)\;|\; g_1 \;\text{and}\; g_1' \;\text{are conjugate in} \;G(\overline{F})\}/\{G(F)-\text{conjugacy}\}$$
	be the set of $G(F)$-conjugacy classes inside the stable conjugacy class of $g_1$ in $G(F)$.

	The overall strategy is to express $\Theta_{\pi_{0}}(g)$ as a weighted sum of $\Theta_{\pi_{0}}(g')$ for $g'$ running over a set of representatives for the $G(F)$-conjugacy classes in $[[g]]$, with certain coefficients $c(g')$, i.e.,
	\begin{equation}\label{Eq_average}
		\Theta_{\pi_0}(g)=\sum_{g' \in [[g]]}c(g')\Theta_{\pi_0}(g'),
	\end{equation}
	and then show that $c(g')$ is essentially a constant independent of $g' \in [[g]]$. More precisely, we will have such an expression for each $\mu \in X_*$ with coefficients $c_{\mu}(g')$, and show that $c_{\mu}(g')$ converges to a constant independent of $g'$ as $\mu$ tends to infinity in an appropriate sense.
	
	The first step in our proof of Theorem \ref{Thm_main} is to compute the Harish-Chandra character $\Theta_{T_{V}(\pi_{0})}$ of $T_V(\pi_0):=i_1^*T_V(i_{1!}\pi_0)$ in two ways, for any representation $V$ of $\widehat{G}$, to obtain an expression as Equation (\ref{Eq_average}).
	
	\begin{lemma}\label{Lemma_1}(Equation (\ref{Eq_Hecke_HC}), Lemma \ref{Lemma:*!})
		Let $V_{\mu}$ be the highest weight representation of $\widehat{G}$ with highest weight $\mu \in X^*(\widehat{T})$. Then we have
		\begin{equation}\label{Equation:key}
			\mathcal{T}_{V_{\mu}}\Theta_{\pi_0}=\Theta_{T_{V_{\mu}^{\vee}}(\pi_{0})}=\dim(V_{\mu})\Theta_{\pi_{0}}
		\end{equation}
		as functions on $G(F)_{\elliptic}$, where $\mathcal{T}_{V_{\mu}}$ will be explained below.
	\end{lemma}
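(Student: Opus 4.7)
The plan is to prove the two equalities essentially independently.

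The second equality $\Theta_{T_{V_{\mu}^{\vee}}(\pi_{0})}=\dim(V_{\mu})\Theta_{\pi_{0}}$ I would treat as a direct consequence of the Hecke eigensheaf property of $\mathcal{F}_0 = i_{1!}\pi_0$ already recorded in Lemma \ref{Prop:Hecke_eigen}: one has $T_{V_{\mu}^{\vee}}(\mathcal{F}_0) \cong \mathcal{F}_0^{\oplus \dim V_{\mu}^{\vee}}$ in $\DBunG$, so after applying $i_1^*$ and taking the alternating sum of Harish-Chandra characters of cohomologies we obtain the identity, using $\dim V_{\mu}^{\vee} = \dim V_{\mu}$. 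This step is formal, modulo checking that $i_1^*$ commutes suitably with the Hecke operator on sheaves supported on $[*/G(F)]$, which I would record as a small compatibility lemma.

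The first equality is where the real work sits, and my plan is to invoke the Harish-Chandra character formula of \cite{hansen2022kottwitz} evaluating $\Theta_{T_V(\pi)}$ on elliptic regular semisimple elements. For $g \in G(F)_{\elliptic}$ with elliptic torus $T_g := \Cent(g,G)$, fixing a Borel of $G$ over $\overline{F}$ containing $(T_g)_{\overline{F}}$ identifies $X_*(T_g)$ canonically with $X_*$. Hansen's formula will express $\Theta_{T_V(\pi)}(g)$ as a finite $\mathbb{Z}$-linear combination of values $\Theta_\pi(g')$ with $g'$ running through representatives of $G(F)$-conjugacy classes inside the stable class $[[g]]$, with coefficients built from the weight multiplicities of $V$ on $X_*(T_g)$ together with Kottwitz-type signs measuring the relative position of $g'$ and $g$. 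I will then take $\mathcal{T}_{V_{\mu}}$ to be the linear operator on functions on $G(F)_{\elliptic}$ whose value on a test function $\Theta$ at $g$ is exactly the expression Hansen's formula produces for $V = V_{\mu}^{\vee}$; with this definition the first equality holds by construction, applied termwise to the cohomologies of $\pi_0$.

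The main obstacle I anticipate is purely one of bookkeeping: matching the duality convention $V_{\mu} \leftrightarrow V_{\mu}^{\vee}$ on the two sides, pinning down dominance and sign conventions under the identification $X_*(T_g) \cong X_*$, and checking that Hansen's formula is compatible with forming the alternating sum of characters over cohomologies of the derived object $\pi_0$. Once these normalizations are fixed, combining the two equalities produces the eigenvalue identity $\mathcal{T}_{V_{\mu}}\Theta_{\pi_0} = \dim(V_\mu)\Theta_{\pi_0}$ on $G(F)_{\elliptic}$, which provides exactly the averaging relation foreshadowed in Equation (\ref{Eq_average}) and sets up the subsequent equi-distribution argument.
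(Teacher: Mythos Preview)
Your proposal is correct and takes essentially the same route as the paper: the first equality via the Hansen--Kaletha--Weinstein character formula, the second via the Hecke eigensheaf property of $\mathcal{F}_0$. The ``small compatibility lemma'' you anticipate is precisely Lemma~\ref{Lemma:*!} (that $i_{1,*} \cong i_{1,!}$ on $\operatorname{D}^{C_\varphi}(\Bun_G, \Ql)^{\omega}$), and the duality bookkeeping you flag is handled in the body by specializing to $\mu = \mu_m = 4m\rho_G$, for which $V_{\mu_m}$ is self-dual (Lemma~\ref{Lemma_selfdual}).
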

	
	As a quick comment on the proof of Lemma \ref{Lemma_1}, the first equality is obtained by combining Lemma \ref{Lemma:*!} and \cite[Theorem 6.5.2]{hansen2022kottwitz}, which is essentially a corollary of the relative Lefschetz--Verdier trace formula.
	The second equality follows from the Hecke eigensheaf property of $\mathcal{F}_0$.

	By definition (\cite[Definition 3.2.7 and Proposition 6.3.5]{hansen2022kottwitz}), 
	$$\frac{1}{\dim V_{\mu}}\mathcal{T}_{V_\mu}\Theta_{\pi_0}(g)$$
	is the weighted sum of Harish-Chander characters $\Theta_{\pi_0}(g')$ over $g' \in [[g]]$, with coefficients
	\begin{equation}\label{Equation_coeff}
		(-1)^d\frac{\sum_{\lambda \in X_*, \; \overline{\lambda}=\inv(g, g')}\dim(V_{\mu}[\lambda])}{\dim(V_{\mu})},
	\end{equation}
	where 
	\begin{enumerate}
		\item $d=\langle \mu, 2\rho_{G}\rangle$, where $\rho_{G}$ is the half sum of positive roots.
		\item $V_{\mu}[\lambda]$ denotes the $\lambda$-weight space of $V_{\mu}$.
		\item $\inv(g, g')$ is a certain element in the Kottwitz set $B(T_g) \cong X_*(T_g)_{\Gamma}$ (Definition \ref{Defition_inv}).
	\end{enumerate}
	In other words, Lemma \ref{Lemma_1} states that
	\begin{equation}\label{Equation_explicit}
		\Theta_{\pi_0}(g)=(-1)^d\sum_{g' \in [[g]]}\frac{\sum_{\lambda \in X_*,\;\overline{\lambda}=\inv(g, g')}\dim V_{\mu}[\lambda]}{\dim V_{\mu}}\Theta_{\pi_0}(g').
	\end{equation}

	The second step in the proof of Theorem \ref{Thm_main} is to show that the coefficients (\ref{Equation_coeff}) are independent of $g' \in [[g]]$ when $\mu$ tends to infinity in an appropriate sense. Indeed, to show $\Theta_{\pi_{0}}$ is stable, it suffices to show the following.
	
	\begin{theorem}(Theorem \ref{Thm_indep})
		For $m \in \mathbb{Z}_{\geq 1}$, let $\mu_m=4m\rho_{G}$.
		For any $g' \in G(F)$ that is conjugate to $g$ in $G(\overline{F})$, the limit
		$$\lim_{m \to \infty}\frac{\sum_{\lambda \in X_*, \;\overline{\lambda}=\inv(g, g')}\dim (V_{\mu_m}[\lambda])}{\dim V_{\mu_m}}$$
		exists and is independent of $g'$.
	\end{theorem}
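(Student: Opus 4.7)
The plan is to apply Fourier analysis on the finite abelian group $A := X_*(T_g)_\Gamma$. Since $G$ is semisimple and simply connected, $T_g$ elliptic forces $X_*(T_g)^{\Gamma}=0$, hence $A$ is finite; dually $X^*(T_g)^{\Gamma}=0$, so the fixed-point subgroup $\widehat T(\mathbb{C})^{\Gamma}$ (for the Galois action transported from $T_g$) is also finite. Writing
\[
\mu_m(c) := \frac{\sum_{\overline{\lambda}=c}\dim V_{\mu_m}[\lambda]}{\dim V_{\mu_m}} \qquad (c\in A),
\]
I aim to show $\mu_m(c)\to 1/|A|$ pointwise, which is manifestly independent of $c=\inv(g,g')$. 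A character $\chi\in\widehat A$ lifts to a $\Gamma$-invariant homomorphism $\tilde\chi\colon X_*(T_g)\to\mathbb{C}^{\times}$ and, via $X_*(T_g)\cong X^*(\widehat T)$, to an element $t_\chi\in\widehat T(\mathbb{C})^{\Gamma}$; a direct unwinding gives
\[
\hat\mu_m(\chi) \;=\; \frac{\chi_{V_{\mu_m}}(t_\chi)}{\dim V_{\mu_m}},
\]
where $\chi_{V_{\mu_m}}$ is the character of $V_{\mu_m}$ as a rational $\widehat G$-representation.

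The heart of the proof is to show $\hat\mu_m(\chi)\to 0$ for every non-trivial $\chi$. Because $G$ is simply connected, $\widehat G$ is adjoint and $Z(\widehat G)=1$, so any $\chi\neq 1$ yields a non-central $t_\chi$, whose connected centralizer $L := Z_{\widehat G}(t_\chi)^{0}$ is a \emph{proper} Levi subgroup of $\widehat G$. At $t_\chi$ the $\widehat G$-Weyl denominator is zero-over-zero: it factors as $D_L\cdot D'$ with $D_L(t_\chi)=0$ and $D'(t_\chi)\neq 0$, and the $W_L$-antisymmetry of the Weyl numerator enforces divisibility by $D_L$. Grouping the sum defining the Weyl numerator along $W_L$-cosets in $W$, cancelling $D_L$, and observing that $(4m+1)\rho$ is $\widehat G$-regular (hence any Weyl translate is $L$-regular), I would obtain a restriction-to-Levi identity
\[
\chi_{V_{\mu_m}}(t_\chi) \;=\; \frac{1}{D'(t_\chi)}\sum_{w\in W^{L}}\operatorname{sgn}(w)\,\chi^{L}_{V^{L}_{\eta_w}}(t_\chi),
\]
where $\eta_w$ is the $L$-dominant weight in the $W_L$-orbit of $w(\mu_m+\rho)-\rho_L$, and $\|\eta_w\|=O(m)$. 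Since $t_\chi\in Z(L)$, each $L$-character on the right equals $\eta_w(t_\chi)\cdot\dim V^{L}_{\eta_w}$, a root-of-unity scalar times a Weyl dimension; the Weyl dimension formula for $L$ bounds the latter by $O(m^{|\Phi^{+}(L)|})$. Hence $|\chi_{V_{\mu_m}}(t_\chi)|=O(m^{|\Phi^{+}(L)|})$, while $\dim V_{\mu_m}=\Theta(m^{|\Phi^{+}(\widehat G)|})$, and $L\subsetneq\widehat G$ forces $|\Phi^{+}(L)|<|\Phi^{+}(\widehat G)|$, so the ratio decays polynomially to zero.

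Fourier inversion on $A$ then yields $\mu_m(c)\to 1/|A|$ for every $c\in A$, proving the theorem. The main obstacle is precisely the Weyl character estimate above: since $t_\chi$ is Weyl-singular (in fact central in its Levi $L$), the naive Weyl character formula is indeterminate, and the polynomial decay rate requires carrying out the explicit cancellation against $D_L$ (or, equivalently, performing a non-isolated Atiyah--Bott fixed-point computation on $\widehat G/\widehat B$ whose fixed locus is a disjoint union of copies of the $L$-flag variety). Once this bound is in place, the remainder of the argument is routine harmonic analysis on the finite abelian group $A$.
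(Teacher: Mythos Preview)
Your overall architecture---Fourier analysis on the finite abelian group $A=X_*(T_g)_\Gamma$ (which under Assumption~\ref{Assumption_intro} coincides with the paper's $H_g$), reducing everything to the decay estimate $\chi_{V_{\mu_m}}(t_\chi)/\dim V_{\mu_m}\to 0$ for each nontrivial $\chi$---is exactly the paper's. The genuine difference is in how that estimate is obtained.

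The paper exploits the particular choice $\mu_m=4m\rho_G$: the Weyl character formula collapses (Lemma~\ref{Lemma_char}) to the product
\[
\character V_{\mu_m}=\prod_{\alpha\in\widehat\Phi^+}\bigl(e^{2m\alpha}+e^{(2m-1)\alpha}+\cdots+e^{-2m\alpha}\bigr),
\]
so that evaluation at $t_\chi$ turns each factor into $4m+1$ when $\alpha(t_\chi)=1$ and into a bounded partial geometric sum otherwise; nontriviality of $\chi$ on the root lattice forces at least one bounded factor, and the estimate drops out in two lines (Proposition~\ref{Prop:dim}). Your restriction-to-centralizer computation is correct and reaches the same conclusion, but is substantially heavier machinery for this purpose. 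What it buys you is robustness: your argument never uses that $\mu_m$ is a multiple of $\rho_G$, only that it grows linearly along a strictly dominant ray, so it would prove equidistribution for any such sequence of highest weights.

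One terminological point worth flagging: since $\widehat G$ is adjoint rather than simply connected, the connected centralizer $L=Z_{\widehat G}(t_\chi)^0$ of a torsion element need not be a Levi subgroup in the parabolic sense but only a \emph{pseudo-Levi} (a connected reductive subgroup of maximal rank). Your Weyl-numerator regrouping along $W_L$-cosets, the cancellation of $D_L$, and the crucial inequality $|\Phi^+(L)|<|\Phi^+(\widehat G)|$ all go through verbatim for pseudo-Levis, so this does not damage the argument---but the word ``Levi'' should be relaxed accordingly.
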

	
	\begin{remark}
		In the definition of $\mu_m$, the scalar $4$ ensures that  $(-1)^d=1$.  The factor $\rho_{G}$ is to have a nice formula for the character of $V_{\mu_m}$ (Lemma \ref{Lemma_char}).
	\end{remark}
	
	Let $H_g:=\ker(X_*(T_g)_{\Gamma} \to \pi_1(G)_{\Gamma})$. We define $H_g$ in such a way to take care of different strata of $\Bun_G$ for general $G$. $H_g$ is a finite abelian group since $T_g$ is elliptic (Lemma \ref{Lemma:fab}). Note that under our simplifying Assumption \ref{Assumption_intro} in this introduction, $\pi_1(G)_{\Gamma}$ is trivial, and $H_g=X_*(T_g)_{\Gamma}$. 
	
	Now the idea is to use Fourier analysis on the finite abelian group $H_g$, namely, we apply characters $\chi: H_g \to \mathbb{C}^*$ to $$\character V_{\mu_m}=\sum_{\lambda \in X^*(\widehat{T})}\dim V_{\mu_m}[\lambda]e^{\lambda},$$
	see the discussion before Proposition \ref{Prop:dim}. When $\chi$ is the trivial character, 
	$$\chi(\character V_{\mu_m})=\dim V_{\mu_m}.$$ When $\chi$ is nontrivial, denoting $\overline{\lambda}$ the image of $\lambda$ in $H_g$, we have
	$$\chi(\character V_{\mu_m})=\sum_{\lambda} (\dim V_{\mu_m}[\lambda])\chi(\overline{\lambda})=\dim V_{\mu_m}\sum_{h \in H_g}\chi(h)S_{h, m}$$ 
	is a weighted sum of $S_{h, m}$, where
	$$S_{h, m}:=\frac{\sum_{\lambda \in X_*, \; \overline{\lambda}=h \in H_g} \dim (V_{\mu_m}[\lambda])}{\dim V_{\mu_m}},$$
	see Equation (\ref{Eq:rearrange}).
	Using the Weyl character formula, we have the following estimate on the growth of $\chi(\character V_{\mu_m})$ with respect to $m$.
	
	\begin{proposition}
		\label{Prop_estimate}(Proposition \ref{Prop:dim}) Let $k$ be the number of positive roots.
		\begin{enumerate}
			\item $\dim(V_{\mu_m})$ is a polynomial in $m$ of degree $k$.
			\item Assume that $k \geq 1$. For any nontrivial character $\chi$ of $H_g$, 
			$$\chi(\character V_{\mu_m})$$
			is bounded by a polynomial in $m$ of degree less or equal to $k-1$.
		\end{enumerate}
	\end{proposition}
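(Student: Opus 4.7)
For part (1), I apply the Weyl dimension formula to $\mu_m + \rho_G = (4m+1)\rho_G$:
\[
\dim V_{\mu_m} = \prod_{\alpha \in \Phi^+} \frac{\langle \mu_m + \rho_G, \alpha^\vee\rangle}{\langle \rho_G, \alpha^\vee\rangle} = (4m+1)^k,
\]
which is a polynomial in $m$ of degree $k$.

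For part (2), I first interpret $\chi$ as evaluation at a torsion element of $\widehat{T}$. Under Assumption \ref{Assumption_intro}, $H_g = X_*(T_g)_\Gamma$, and a character $\chi \colon H_g \to \mathbb{C}^\times$ lifts to a $\Gamma$-invariant character of $X_*(T_g) = X^*(\widehat{T})$, equivalently to a point $t := t_\chi \in \widehat{T}^\Gamma(\mathbb{C})$. Finiteness of $H_g$ (Lemma \ref{Lemma:fab}) makes $t$ of finite order, so $|t^\lambda| = 1$ for all $\lambda \in X^*(\widehat{T})$, and one has $\chi(\character V_{\mu_m}) = \character V_{\mu_m}(t)$. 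Moreover $\chi$ is nontrivial iff $t \neq 1$; since $G$ is simply connected, $\widehat{G}$ is adjoint with trivial centre, so $t \neq 1$ forces the connected centraliser $\widehat{G}(t)^0$ to be a proper Levi of $\widehat{G}$. Writing $\Phi_t := \{\alpha \in \Phi : t^\alpha = 1\} \subsetneq \Phi$, with positive subsystem $\Phi_t^+$, Weyl group $W_t$, and $k_t := |\Phi_t^+|$, this yields $k_t \leq k - 1$.

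The key step is to evaluate the Weyl character formula at the possibly singular point $t$. Setting $A_\nu(u) := \sum_{w \in W}(-1)^{\ell(w)} u^{w\nu}$ for $u \in \widehat{T}(\mathbb{C})$, I Taylor expand $A_{(4m+1)\rho_G}(t \cdot e^{\varepsilon X})$ and $A_{\rho_G}(t \cdot e^{\varepsilon X})$ in $\varepsilon$ for a generic $X \in \Lie(\widehat{T})_{\mathbb{R}}$. Using the right-coset decomposition $W = \bigsqcup_{\tau \in W^{W_t}} W_t \tau$ with $W^{W_t}$ minimal-length representatives, together with $\sigma^{-1}t = t$ for $\sigma \in W_t$, the inner $\sigma$-sums are $W_t$-alternating polynomials in $X$. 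The generalised Vandermonde identity
\[
\sum_{\sigma \in W_t}(-1)^{\ell(\sigma)}\langle \sigma\lambda, X\rangle^{k_t} = k_t!\,\prod_{\alpha \in \Phi_t^+}\langle \alpha^\vee, \lambda\rangle \cdot \prod_{\alpha \in \Phi_t^+}\alpha(X)
\]
shows that both $A_{(4m+1)\rho_G}(te^{\varepsilon X})$ and $A_{\rho_G}(te^{\varepsilon X})$ vanish to precisely order $\varepsilon^{k_t}$; after the common factor $\varepsilon^{k_t}\prod_{\alpha \in \Phi_t^+}\alpha(X)$ cancels, one arrives at the closed expression
\[
\character V_{\mu_m}(t) = (4m+1)^{k_t}\cdot\frac{\sum_{\tau \in W^{W_t}}(-1)^{\ell(\tau)}(t^{\tau\rho_G})^{4m+1}\prod_{\alpha \in \Phi_t^+}\langle \alpha^\vee, \tau\rho_G\rangle}{\sum_{\tau \in W^{W_t}}(-1)^{\ell(\tau)} t^{\tau\rho_G}\prod_{\alpha \in \Phi_t^+}\langle \alpha^\vee, \tau\rho_G\rangle}.
\]
The denominator is a nonzero constant independent of $m$ (it agrees up to sign with $\prod_{\alpha \in \Phi^+ \setminus \Phi_t^+}(t^{\alpha/2} - t^{-\alpha/2})$); each summand of the numerator has modulus bounded by $\prod_{\alpha \in \Phi_t^+}|\langle \alpha^\vee, \tau\rho_G\rangle|$, uniformly in $m$, and there are only $|W|/|W_t|$ summands. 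Hence $|\character V_{\mu_m}(t)| = O(m^{k_t}) \leq O(m^{k-1})$, proving the bound.

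The principal technical obstacle is the careful Taylor analysis at the singular point when $\Phi_t^+ \neq \emptyset$: I must confirm that the leading $\varepsilon^{k_t}$ coefficients of both $A_{(4m+1)\rho_G}$ and $A_{\rho_G}$ computed via the Vandermonde identity are indeed correct with the denominator nonzero, that the common factor $\prod_{\alpha \in \Phi_t^+}\alpha(X)$ genuinely cancels to yield a finite limit, and that the entire $m$-dependence is concentrated in the $(4m+1)^{k_t}$ prefactor with all remaining data bounded uniformly in $m$.
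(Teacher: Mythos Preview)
Your argument for both parts is correct, but part (2) takes a substantially different and more elaborate route than the paper. The paper's key device is the product formula (Lemma~\ref{Lemma_char})
\[
\character V_{\mu_m}=\prod_{\alpha\in\widehat{\Phi}^+}\bigl(e^{2m\alpha}+e^{(2m-1)\alpha}+\cdots+e^{-2m\alpha}\bigr),
\]
which is special to the choice $\mu_m=4m\rho_G$. Applying $\chi$ factor by factor, each factor with $\chi(\alpha)=1$ contributes exactly $4m+1$, while each factor with $\chi(\alpha)\neq 1$ is a geometric sum in a nontrivial root of unity and hence uniformly bounded. Nontriviality of $\chi$ on $H_g$ forces $\chi(\beta)\neq 1$ for some $\beta\in\widehat{\Phi}^+$ (via surjectivity of $\Lambda_{\widehat{\Phi}}\to H_g$, Lemma~\ref{Lemma:surj}), so at least one factor is bounded and the estimate is immediate. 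This is a two-line proof once the product formula is in hand.

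Your approach instead treats $\chi$ as evaluation at a torsion point $t\in\widehat{T}$ and carries out a L'H\^opital/Taylor analysis of the Weyl character formula at this singular point, arriving at a closed expression with prefactor $(4m+1)^{k_t}$. This is correct and in fact strictly more general: it would give the same bound for $V_{m\lambda}$ with any strictly dominant $\lambda$, not just $\lambda=4\rho_G$. The price is the delicate bookkeeping you flag in your last paragraph (coset decomposition, the Vandermonde-type identity, nonvanishing of the denominator, well-definedness of $t^{\tau\rho_G}$ when $\rho_G$ may not lie in $X^*(\widehat{T})$). One small imprecision: the connected centraliser of a torsion element is in general only a \emph{pseudo}-Levi, not a Levi, though this does not affect your conclusion $k_t\le k-1$. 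Also note that your deduction of $\Phi_t\subsetneq\Phi$ uses Assumption~\ref{Assumption_intro} (so that $\widehat{G}$ is adjoint); the paper's argument via Lemma~\ref{Lemma:surj} works for arbitrary $G$.
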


	Therefore, we have
	$$\frac{\chi(\character V_{\mu_m})}{\dim V_{\mu_m}} \to 0 \qquad  m \to \infty.$$ Hence
	$$S_{\chi, m}:=\sum_{h \in H_g}\chi(h)S_{h, m} \to 0 \qquad m \to \infty.$$
	When $\chi$ runs over the non-trivial characters of $H_g$, the coefficients of $S_{h, m} (h \in H_g)$ in $S_{\chi, m}$, i.e., the vectors $(\chi(h))_{h \in H_g}$ span the subspace of $\mathbb{C}^{|H_g|}$ with sum $0$. Therefore,
	$$S_{h, m}-S_{h', m} \to 0 \qquad m \to \infty$$
	for any $h, h' \in H_g$, as the coefficients of $S_{h, m}-S_{h', m}$ has sum $0$. By noticing that for any $g' \in [[g]]$, $\inv(g, g') \in H_g$ (Lemma \ref{Lemma:ker}), we conclude the proof of the first statement of Theorem \ref{Thm_main}. The second statement of Theorem \ref{Thm_main} follows from \cite[Theorem 6.1]{arthur1996local}.
	
	\subsection{Comments on Theorem \ref{Thm_main}}
	
	We should first emphasize that a prior, we prove different things from the classical results, because we don't know the compatibility between Fargues--Scholze's construction and the classical constructions of the local Langlands correspondence in full generality. 
	
	Secondly, to our knowledge, all classical results use heavily the theory of endoscopy, while our proof is independent of the theory of endoscopy, except the deduction of Theorem \ref{Thm_main_distribution} (i.e., the second statement of Theorem \ref{Thm_main}) from Theorem \ref{Thm_main_general} (i.e., the first statement of Theorem \ref{Thm_main}).
	
	Moreover, while the classical theory of endoscopy is not fully developed in positive characteristic, our method also works in positive characteristic. 
	
	Recently, Bezrukavnikov and Varshavsky (\cite{bezrukavnikov2021affine}) proved stability for depth-zero $L$-packets using the geometry of affine Springer fiber in the positive characteristic case. It is not clear but will be interesting to see if there is any connections between their work and our work.

	\subsection{Acknowledgement}
	
	It is a pleasure to thank Peter Scholze for giving me this project, for his idea of considering the Hecke eigensheaf property and invoking the result of \cite{hansen2022kottwitz}, and for suggesting me use Fourier analysis on finite abelian groups to prove the equi-distribution property. I thank my advisors Jessica Fintzen and Peter Scholze  for their interest and suggestions regarding this topic. Moreover, I thank Alexander Bertoloni-Meli, Mikhail Borovoi, Tasho Kaletha, Linus Hamann, David Hansen, Wenwei Li, Xier Ren, Sandeep Varma, Yakov Varshavsky, Haining Wang, and Xiaoxiang Zhou for discussions and comments on this paper. Additionally I would like to thank all members of the arithmetic geometry and representation theory group in Bonn for their constant support.
	
	I am funded by the Hausdorff Center of Mathematics and jointly hosted by the International Max-Planck Research School on Moduli Spaces. I thank both institutes for providing a nice working environment. I am partly supported by DFG via the Leibniz prize of Peter Scholze.

	\section{Basic notions}
	Let $F$ be a non-archimedean local field with residue characteristic $p$. Fix an algebraic closure $\overline{F}$ of $F$. All our algebraic extensions of $F$ are chosen to be inside $\overline{F}$.  Let $F^s$ be the separable closure of $F$ in $\overline{F}$.
	Let $\Gamma:=\Gal(F^s/F)$. Let $W_F$ be the Weil group of $F$.
	
	Let $G$ be a connected reductive group over $F$. Let $\ell$ be a prime number which is different from $p$. Fix an algebraic closure $\Ql$ of the $\ell$-adic numbers $\mathbb{Q}_{\ell}$. Let $\Rep_{\Ql}G(F)$ be the category of smooth representations of $G(F)$ over $\Ql$. From now on, all representations of $G(F)$ are assumed to be smooth. Let $\Irr_{\Ql}G(F)$ be the equivalence classes of irreducible representations of $G(F)$.  We fix a Haar measure on $G(F)$ throughout.

	Let $G(F)_{\rs}$ be the set of regular semisimple elements in $G(F)$. Let $G(F)_{\sr} \subseteq G(F)_{\rs}$ be the set of strongly regular semisimple elements in $G(F)$. Let $G(F)_{\elliptic} \subseteq G(F)_{\sr}$ be the subset of elliptic elements, i.e., those whose centralizer is an elliptic maximal torus.
	
	Let $\Tuniv$ be the universal Cartan of $G$. Let $X_*:=X_*(\Tuniv)$ be the cocharacters of $\Tuniv$ over $\overline{F}$. $X_*$ has a distinguished notion of dominance given by the universal Borel. Let $X_*^+ \subseteq X_*$ denote the dominant cocharacters.

Let $\widehat{G}$ be the Langlands dual group of $G$ over $\Ql$ .
	It comes canonically with a maximal torus and a Borel $\widehat{T} \subseteq \widehat{B} \subseteq \widehat{G}$ with the following properties. 
	\begin{enumerate}
		\item $\widehat{T}$ is dual to $\Tuniv$ in the sense that there is a canonical isomorphism $X_*(\Tuniv) \cong X^*(\widehat{T})$.
		\item $X_*^+$ corresponds to the dominant characters of $\widehat{T}$ with respect to $\widehat{B}$ under the above canonical isomorphism.
	\end{enumerate}

	Let $\widehat{\Phi}:=\Phi(\widehat{G}, \widehat{T})$ be the roots of $\widehat{G}$ with respect to $\widehat{T}$. Let $\Lambda_{\widehat{\Phi}}:=\mathbb{Z}\widehat{\Phi}$ be the root lattice. Let $\Rep_{\widehat{G}}(\Ql)$ be the category of algebraic representations of $\widehat{G}$ over $\Ql$.

	For $\mu \in X_*(\Tuniv) \cong X^*(\widehat{T})$ dominant integral, we denote 
		$$V_{\mu}:=R^0\operatorname{ind}_{\widehat{B}}^{\widehat{G}}\mu \in \Rep_{\widehat{G}}(\Ql)$$ the irreducible representation of $\widehat{G}$ with highest weight $\mu$. Note that $V_{\mu}$ is denoted by $H^0(\mu)$ in \cite{jantzen2003representation}, and is equal to $L(\mu)$, which is defined as the socle of $H^0(\mu)$ in \cite{jantzen2003representation}. This is because in characteristic zero, all representations are semisimple.
	For $m \in \mathbb{Z}_{\geq 1}$, let $\mu_m:=4m\rho_{G} \in X^*(\widehat{T})$ where $\rho_{G}=\frac{1}{2}\sum_{\alpha \in \widehat{\Phi}^+}\alpha$. 
	
	For $\varphi: W_F \to \widehat{G}(\Ql)$ an $L$-parameter, we denote by $$S_{\varphi}:=\Cent(\varphi, \widehat{G})=\{g \in \widehat{G} \;|\; g\varphi(w)w(g^{-1})=\varphi(w), \forall w \in W_F\}$$ the centralizer of $\varphi$, which is an algebraic group over $\Ql$. We denote by $Z(\widehat{G})$ the center of $\widehat{G}$, which is a group scheme over $\Ql$. We write $\ZG$ for the $\Gamma$-fixed points of $Z(\widehat{G})$, which is again a group scheme over $\Ql$. We denote by
		$$\pi_1(G):=\pi_1(G_{\overline{F}}) \cong X_*/\{{\text{coroot lattice}}\} \cong X^*(\widehat{T})/\Lambda_{\widehat{\Phi}}$$
		the Borovoi fundamental group of $G$ (see Subsection \ref{Subsection_Borovoi} for details). Then $\ZG$ is the diagonalizable group scheme with characters the coinvariance $\pi_1(G)_{\Gamma}$ of $\pi_1(G)$ (Lemma \ref{Lemma:ZG}).

	\subsection{Algebraic groups and the regular representation}\label{Subsection_regular_rep}

	\begin{definition}[{\cite[I.2.10]{jantzen2003representation}}]
		Let $H$ be an algebraic group over $\Ql$. The \textbf{regular representation} of $H$ is defined to be its coordinate ring $\Ql[H]=\mathcal{O}(H)$ as a left-$H$-module.
	\end{definition}
	
	Let $*:=\Spec \Ql$. Let $[*/H]$ be the classifying stack of $H$. Then the category of quasicoherent sheaves on $[*/H]$ is equivalent to the category of algebraic representations of $H$. Under this equivalence, the regular representation corresponds to $(\pr)_*\Ql$, where $\pr: * \to [*/H]$ is the projection map, and $\Ql$ is the structure sheaf on $*$.

	\begin{lemma}\label{Lemma_O(S_phi)}
		$$\mathcal{O}(S_{\varphi}) \cong \bigoplus_WW^{\oplus\dim W},$$
		where $W$ runs over all irreducible representations of $S_{\varphi}$.
	\end{lemma}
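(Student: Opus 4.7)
The plan is to invoke the algebraic Peter--Weyl decomposition for the linearly reductive group $S_\varphi$ over $\Ql$. The first step is to note that $S_\varphi$ is reductive: this follows from the standard fact that the centralizer in a reductive group (in characteristic zero) of a set of semisimple elements is reductive, applied to the image of a Frobenius-semisimple representative of $\varphi$. Since we are in characteristic zero, reductive is equivalent to linearly reductive, so every algebraic representation of $S_\varphi$ is completely reducible; in particular $\mathcal{O}(S_\varphi)$ decomposes as a direct sum of irreducibles, and it remains to compute, for each irreducible $W$, the multiplicity with which it appears.

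The second step is to compute $\dim_{\Ql}\Hom_{S_\varphi}(W, \mathcal{O}(S_\varphi))$. I would use the evaluation-at-identity map
$$\Hom_{S_\varphi}\bigl(W, \mathcal{O}(S_\varphi)\bigr) \longrightarrow W^*, \qquad f \longmapsto \bigl(w \mapsto f(w)(e)\bigr),$$
which is an isomorphism of $\Ql$-vector spaces with inverse sending $\xi \in W^*$ to $w \mapsto (g \mapsto \xi(g^{-1} w))$. This is the algebraic analogue of Frobenius reciprocity for induction from the trivial subgroup: giving an $S_\varphi$-equivariant map from $W$ into functions on $S_\varphi$ is the same as giving an arbitrary linear functional on $W$. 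Since $\dim W^* = \dim W$, each irreducible appears with multiplicity $\dim W$, giving the claim.

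Equivalently and more symmetrically, one can assemble these isomorphisms into the classical matrix-coefficient map
$$\bigoplus_W W \otimes_{\Ql} W^* \; \longrightarrow \; \mathcal{O}(S_\varphi), \qquad w \otimes \xi \longmapsto \bigl(g \mapsto \xi(g \cdot w)\bigr),$$
which is an isomorphism of $(S_\varphi \times S_\varphi)$-bimodules (injectivity is immediate from the definition, surjectivity follows from complete reducibility). Restricting to the left $S_\varphi$-action turns $W \otimes W^*$ into $W^{\oplus \dim W^*} = W^{\oplus \dim W}$, yielding the stated decomposition. The only nontrivial input is the linear reductivity of $S_\varphi$; beyond that, the argument is purely formal, so there is no substantive obstacle to overcome.
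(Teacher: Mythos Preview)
Your proof is correct and follows essentially the same approach as the paper's: semisimplicity of $\mathcal{O}(S_\varphi)$ plus Frobenius reciprocity to compute the multiplicity of each irreducible as $\dim W$. You are simply more explicit than the paper, spelling out why $S_\varphi$ is linearly reductive (the paper just asserts semisimplicity over $\Ql$), writing down the evaluation-at-identity isomorphism that realizes Frobenius reciprocity, and offering the equivalent matrix-coefficient formulation; note only that your reductivity argument implicitly uses semisimplicity of $\varphi$, which is not stated in the lemma but is the only case needed in the paper.
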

	
	\begin{proof}
		Over $\Ql$, the regular representation $\mathcal{O}(S_{\varphi})=\Ql[S_{\varphi}]$ is semisimple. It remains to compute the multiplicity for each irreducible representation $W$, which turns out to be $\dim W$ by Frobenius reciprocity.
	\end{proof}
	
	We can decompose the regular representation $\mathcal{O}(S_{\varphi})$ according to its restriction to the central subgroup $Z(\widehat{G})^{\Gamma}$: \begin{equation}\label{Eq:res_to_Z}
		\mathcal{O}(S_{\varphi}) \cong \bigoplus_{\chi \in X^*(Z(\widehat{G})^{\Gamma})}\mathcal{O}(S_{\varphi})_{\chi},
	\end{equation}
	where $\mathcal{O}(S_{\varphi})_{\chi}$ is the $\chi$-isotypic summand of $\mathcal{O}(S_{\varphi})$. In particular, $\mathcal{O}(S_{\varphi})_0$ is the $\chi$-isotypic summand for $\chi=0$.
	
	\begin{lemma}\label{Lemma_OSZ}
		Let $\OSZ$ be the regular representation of $S_{\varphi}/\ZG$, viewed as a representation of $S_{\varphi}$ by inflation. We have
		$$\Oo \cong \mathcal{O}\left(S_{\varphi}/\ZG\right).$$
	\end{lemma}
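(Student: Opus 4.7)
The plan is to identify both sides as the subspace of $\mathcal{O}(S_{\varphi})$ consisting of functions invariant under translation by the central subgroup $Z(\widehat{G})^{\Gamma}$. First, I would interpret the decomposition (\ref{Eq:res_to_Z}) precisely: since $Z(\widehat{G})^{\Gamma}$ is central in $S_{\varphi}$, its action on $\mathcal{O}(S_{\varphi})$ by left translation commutes with the full $S_{\varphi}$-action, and because $Z(\widehat{G})^{\Gamma}$ is diagonalizable the resulting decomposition into $\chi$-weight spaces for $\chi \in X^{*}(Z(\widehat{G})^{\Gamma})$ refines to a decomposition of $S_{\varphi}$-representations. In particular, the $\chi=0$ summand $\Oo$ is exactly the subspace of $Z(\widehat{G})^{\Gamma}$-invariant functions in $\mathcal{O}(S_{\varphi})$, viewed as an $S_{\varphi}$-subrepresentation under the residual left-translation action.

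Next I would use that the quotient map $q: S_{\varphi} \twoheadrightarrow S_{\varphi}/Z(\widehat{G})^{\Gamma}$ is a faithfully flat $Z(\widehat{G})^{\Gamma}$-torsor, so pullback of functions gives a canonical isomorphism
\[
q^{*}: \mathcal{O}\bigl(S_{\varphi}/Z(\widehat{G})^{\Gamma}\bigr) \xrightarrow{\sim} \mathcal{O}(S_{\varphi})^{Z(\widehat{G})^{\Gamma}} = \Oo.
\]
Finally, I would verify that $q^{*}$ is $S_{\varphi}$-equivariant, where the right-hand side carries the left-regular action restricted from $\mathcal{O}(S_{\varphi})$, and the left-hand side carries the $S_{\varphi}$-action obtained by inflating the left-regular action of $S_{\varphi}/Z(\widehat{G})^{\Gamma}$ along $q$. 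This is a direct unravelling: for $s \in S_{\varphi}$ with image $\bar{s}$ and $f \in \mathcal{O}(S_{\varphi}/Z(\widehat{G})^{\Gamma})$ one has $\bigl(q^{*}(\bar{s}\cdot f)\bigr)(x) = f(\overline{s^{-1}x}) = (q^{*}f)(s^{-1}x) = (s \cdot q^{*}f)(x)$.

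There is no substantial obstacle here; the whole content is formal once one commits to a convention for the regular representation and the isotypic decomposition. The only point requiring care is consistency of left-versus-right conventions when reading (\ref{Eq:res_to_Z}), but since $Z(\widehat{G})^{\Gamma}$ is central both choices yield the same subspace of invariants, so this does not affect the conclusion.
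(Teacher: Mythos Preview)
Your proposal is correct and follows essentially the same approach as the paper: both arguments identify $\Oo$ with the $\ZG$-invariants $\mathcal{O}(S_{\varphi})^{\ZG}$ via the isotypic decomposition (\ref{Eq:res_to_Z}), and then identify these invariants with $\OSZ$ via pullback along the quotient map. You have simply spelled out in more detail the equivariance check and the torsor interpretation that the paper leaves implicit.
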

	
	\begin{proof}
			Indeed,
			$$\mathcal{O}\left(S_{\varphi}/\ZG\right) \cong \mathcal{O}(S_{\varphi})^{\ZG} \cong \Oo,$$
			where the second isomorphism follows from Equation \eqref{Eq:res_to_Z}.
	\end{proof}

	\subsection{Borovoi's algebraic fundamental group} \label{Subsection_Borovoi}
	
	In this subsection, we review the theory of Borovoi's algebraic fundamental group. The main references are \cite{borovoi1998abelian},  \cite{borovoi2014algebraic}, and \cite{borovoi2024mo}.
   
    Let $G$ be a connected reductive group over a field $K$. Let $\Gamma_K:=\Gal(K^s/K)$ denote the absolute Galois group of $K$.
    
    We write $G_{\operatorname{der}}=[G, G]$ for the derived group of $G$ (it is semisimple). We denote by $G_{\operatorname{sc}}$ the universal cover of $G_{\operatorname{der}}$ (it is simply connected) and consider the composite homomorphism
    $$ \rho\colon\, G_{\operatorname{sc}}\twoheadrightarrow G_{\operatorname{der}}\hookrightarrow G.$$
    
    Let $T \subseteq G$ be a maximal torus (defined over $K$). 
    Set $T_{\operatorname{sc}}=\rho^{-1}(T) \subseteq G_{\operatorname{sc}}$. Let $X_*(T)$ denote the cocharacter group of $T$ (over $K^s$). We set
    $$\pi_1(G, T):=X_*(T)/\rho_*X_*(T_{\operatorname{sc}}).$$
    The Galois group $\Gamma_K$ naturally acts on $\pi_1(G, T)$.
    
    \begin{proposition}[{\cite[Lemma 1.2]{borovoi1998abelian}}]
    	For any two maximal tori $T_1,T_2\subseteq G$,
    	there is a canonical isomorphism of $\Gamma_K$-modules
    	$$\varphi_{12}\colon\, \pi_1(G,T_1)\overset\sim\longrightarrow \pi_1(G,T_2).$$
    	Moreover, for any third maximal  torus $T_3\subseteq G$, we have
    	$$\varphi_{13}=\varphi_{23}\circ\varphi_{12}$$
    	with the obvious notations.
    \end{proposition}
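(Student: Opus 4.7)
\medskip

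\noindent\textbf{Proof proposal.} The plan is to define $\varphi_{12}$ by inner conjugation over $K^s$ and then verify that the resulting map on $\pi_1$ is canonical (i.e.\ independent of the conjugator), $\Gamma_K$-equivariant, and functorial in chains of tori. More precisely, since $G_{K^s}$ is connected reductive and any two maximal tori are conjugate over $K^s$, I would pick some $g \in G(K^s)$ with $\operatorname{Int}(g)T_1 = T_2$. Conjugation by $g$ gives an isomorphism $T_{1,K^s}\to T_{2,K^s}$ of tori; since $\rho\colon G_{\operatorname{sc}}\to G$ is a central isogeny, $\operatorname{Int}(g)$ lifts uniquely to an automorphism of $G_{\operatorname{sc}, K^s}$ sending $T_{\operatorname{sc},1}$ to $T_{\operatorname{sc},2}$. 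Taking cocharacters and passing to the quotient by the image of $X_*(T_{\operatorname{sc},i})$ produces a candidate $\varphi_{12}^g\colon \pi_1(G,T_1)\to\pi_1(G,T_2)$.

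The heart of the proof, and the main obstacle, is showing that $\varphi_{12}^g$ is independent of $g$. If $g'\in G(K^s)$ is another element conjugating $T_1$ to $T_2$, then $h := g^{-1}g' \in N_G(T_1)(K^s)$, so $\varphi_{12}^{g'} = \varphi_{12}^g \circ \operatorname{Int}(h)_*$ on $X_*(T_1)$. It therefore suffices to show that the Weyl group $W(G,T_1)(K^s) = N_G(T_1)(K^s)/T_1(K^s)$ acts trivially on $\pi_1(G,T_1)$. This is the classical fact that $W$ is generated by the reflections $s_\alpha$ in the roots of $(G,T_1)$, and $s_\alpha(\lambda) = \lambda - \langle\alpha,\lambda\rangle\alpha^\vee$, while the coroot $\alpha^\vee$ lies in $\rho_*X_*(T_{\operatorname{sc},1})$ since the corresponding $\operatorname{SL}_2$ (or $\operatorname{PGL}_2$) factor lifts to $G_{\operatorname{sc}}$. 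Once this triviality is established, $\varphi_{12}:=\varphi_{12}^g$ is well-defined.

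Galois equivariance then comes almost for free from well-definedness. For $\sigma\in\Gamma_K$, the element $\sigma(g)$ also conjugates $T_{1,K^s}$ to $T_{2,K^s}$ (because both $T_i$ are defined over $K$), so $\varphi_{12}^{\sigma(g)} = \varphi_{12}^g$; on the other hand $\operatorname{Int}(\sigma(g))_* = \sigma\circ \operatorname{Int}(g)_*\circ\sigma^{-1}$ as maps on cocharacters. Combining the two gives $\varphi_{12}\circ\sigma = \sigma\circ\varphi_{12}$ on $\pi_1(G,T_1)$.

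Finally, the cocycle identity $\varphi_{13}=\varphi_{23}\circ\varphi_{12}$ is straightforward: if $g_1$ conjugates $T_1$ to $T_2$ and $g_2$ conjugates $T_2$ to $T_3$, then $g_2g_1$ conjugates $T_1$ to $T_3$, and $\operatorname{Int}(g_2g_1)_* = \operatorname{Int}(g_2)_*\circ\operatorname{Int}(g_1)_*$ already on $X_*$, so it descends to $\pi_1$. Bijectivity of each $\varphi_{ij}$ follows from applying the construction to the inverse conjugation, again using independence of choice. Thus the only substantive content is the Weyl-invariance in the second paragraph; everything else is formal.
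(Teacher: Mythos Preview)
Your proposal is correct and follows essentially the same route as the paper: define $\varphi_{12}$ via conjugation by some $g\in G(K^s)$, reduce well-definedness to the triviality of the Weyl-group action on $\pi_1(G,T_1)$ via the reflection formula $s_\alpha(\lambda)=\lambda-\langle\alpha,\lambda\rangle\alpha^\vee$ and $\alpha^\vee\in\rho_*X_*(T_{\operatorname{sc},1})$, and then deduce $\Gamma_K$-equivariance from the fact that $\sigma(g)$ is another admissible conjugator. The paper packages these steps as three separate lemmas but the content is identical.
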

    
    \begin{proof}
    	Let $T_1,T_2\subseteq G$ be two maximal tori.
    	Then there exists an element $g\in G(K^s)$ such that
    	\begin{equation}\label{e:2}
    		T_2=g \cdot T_1\cdot g^{-1}.
    	\end{equation}
    	We obtain an isomorphism
    	$$g_*\colon \pi_1(G,T_1)\overset\sim\longrightarrow \pi_1(G,T_2).$$
    	
    	By Lemma \ref{Lemma_g} below, $g_*$ is independent of the choice of $g \in G(K^s)$. By Lemma \ref{Lemma_Gamma} below, $g_*$ is $\Gamma_K$-equivariant. The last claim follows since $g_*$ is independent of the choice of $g \in G(K^s)$.
    \end{proof}
    
    \begin{lemma}\label{Lemma_g}
    	The isomorphism $g_*$ above does not depend on the choice of $g$ satisfying \eqref{e:2}.
    \end{lemma}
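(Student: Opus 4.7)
The plan is to reduce the independence claim to a statement about the Weyl group action on $\pi_1(G, T_1)$. Suppose $g_1, g_2 \in G(K^s)$ both satisfy $T_2 = g_i T_1 g_i^{-1}$. Setting $n := g_1^{-1} g_2$, a direct computation gives $n T_1 n^{-1} = T_1$, so $n \in N_G(T_1)(K^s)$. Moreover, unwinding the definitions, the two isomorphisms $(g_1)_*$ and $(g_2)_*$ from $X_*(T_1)$ to $X_*(T_2)$ (given by conjugation on cocharacters) differ precisely by the action of $n$ on $X_*(T_1)$. Hence it suffices to prove that the conjugation action of $N_G(T_1)(K^s)$ on $X_*(T_1)$ descends to the trivial action on the quotient $\pi_1(G, T_1) = X_*(T_1)/\rho_* X_*(T_{1,\operatorname{sc}})$.

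Since $T_1$ is abelian, its conjugation action on $X_*(T_1)$ is trivial, so this action factors through the Weyl group $W := N_G(T_1)/T_1$. I would then use that $W$ (over $K^s$) is generated by the simple reflections $s_\alpha$ attached to coroots $\alpha^\vee$ of $G$ relative to $T_1$. For any $\mu \in X_*(T_1)$, one has
$$s_\alpha(\mu) = \mu - \langle \alpha, \mu\rangle\, \alpha^\vee.$$
The key point is that every coroot $\alpha^\vee$ of $G$ lies in $\rho_* X_*(T_{1,\operatorname{sc}})$: indeed the coroots of $G$ with respect to $T_1$ coincide with those of $G_{\operatorname{der}}$ with respect to $T_1 \cap G_{\operatorname{der}}$, and these canonically lift through $\rho$ to coroots of the simply connected cover $G_{\operatorname{sc}}$ relative to $T_{1,\operatorname{sc}}$. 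Therefore $s_\alpha(\mu) \equiv \mu \pmod{\rho_* X_*(T_{1,\operatorname{sc}})}$ for every simple reflection, and consequently $W$ acts trivially on $\pi_1(G, T_1)$.

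Combining these two steps, the image of $n \in N_G(T_1)(K^s)$ in $W$ acts trivially on $\pi_1(G, T_1)$, so $(g_1)_* = (g_2)_*$ as maps $\pi_1(G, T_1) \to \pi_1(G, T_2)$, proving the lemma. There is no real obstacle here; the only thing to be slightly careful about is that we are working with $X_*(T_1)$ as a $\Gamma_K$-module defined over $K^s$, so the element $n \in N_G(T_1)(K^s)$ need not be $K$-rational, but this is irrelevant for the equality of the two $\mathbb{Z}$-linear maps on the cocharacter lattice. Galois equivariance of the resulting isomorphism will then be handled separately in the next lemma.
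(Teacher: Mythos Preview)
Your proposal is correct and follows essentially the same approach as the paper: reduce to showing that $N_G(T_1)(K^s)$ acts trivially on $\pi_1(G,T_1)$, then use the reflection formula $s_\alpha(\mu)=\mu-\langle\alpha,\mu\rangle\alpha^\vee$ together with $\alpha^\vee\in\rho_*X_*(T_{1,\operatorname{sc}})$. The only organizational difference is that the paper separates the Weyl-group triviality into its own lemma (Lemma~\ref{Lemma_N}) and works with all reflections rather than just simple ones, but the content is the same.
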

    
    \begin{proof}
    	Let $g'\in G(K^s)$ be another element satisfying  \eqref{e:2}.
    	Then
    	$$ g^{-1}g'\cdot T_1 \cdot  ( g^{-1}g')^{-1} =  T_1.$$
    	Let $N_1$ denote the normalizer of $T_1$ in $G$.
    	Set $n=g^{-1} g'$.
    	Then $n\in N_1(K^s)$ and  $g'=gn$,
    	whence
    	$$ g'_*=g_*\circ n_*.$$
    	By Lemma \ref{Lemma_N} below, the group $N_1(K^s)$, when acting on $X_*(T_1)$ and $X_*((T_1)_{\operatorname{sc}})$ by conjugation,
    	acts trivially on $\pi_1(G,T_1)$, which completes the proof of the lemma.
    \end{proof}
    
    \begin{lemma}[{\cite{borovoi2024mo}}]\label{Lemma_Gamma}
    	The isomorphism $g_*$ above preserves the action of the Galois group $\Gamma_K={\rm Gal}(K^s/K)$.
    \end{lemma}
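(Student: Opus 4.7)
The plan is to reduce the Galois-equivariance statement to the independence-of-conjugator statement of Lemma \ref{Lemma_g}, via a formal compatibility between the Galois action and conjugation. Fix $\sigma \in \Gamma_K$ and $x \in X_*(T_1)$; the goal is to show that $\sigma \cdot g_*(x) = g_*(\sigma \cdot x)$ inside $\pi_1(G, T_2)$.

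The first step is a direct cocharacter-level computation. By definition $g_*(x) \in X_*(T_2)$ is the cocharacter $t \mapsto g \cdot x(t) \cdot g^{-1}$. Applying the (semilinear) Galois action and using that $\sigma$ respects multiplication in $G(K^s)$ yields
$$\sigma \cdot g_*(x) = \sigma(g)_*(\sigma \cdot x)$$
as elements of $X_*(T_2)$. In words, translating the composite $\operatorname{Ad}(g) \circ x$ by $\sigma$ replaces $g$ with $\sigma(g)$ and $x$ with $\sigma \cdot x$, so the Galois-translate of the $g$-conjugate is exactly the $\sigma(g)$-conjugate of the Galois-translate.

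The second step exhibits $\sigma(g)$ as a second valid conjugator from $T_1$ to $T_2$: since both tori descend to $K$,
$$\sigma(g) \cdot T_1 \cdot \sigma(g)^{-1} = \sigma(g \cdot T_1 \cdot g^{-1}) = \sigma(T_2) = T_2.$$
By Lemma \ref{Lemma_g}, the induced maps $\sigma(g)_*$ and $g_*$ therefore agree as homomorphisms $\pi_1(G, T_1) \to \pi_1(G, T_2)$. Combining this with the cocharacter identity above and passing to the quotient by $\rho_* X_*((T_2)_{\operatorname{sc}})$, I conclude $\sigma \cdot g_*(x) = \sigma(g)_*(\sigma \cdot x) = g_*(\sigma \cdot x)$ in $\pi_1(G, T_2)$.

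There is essentially no obstacle beyond bookkeeping; the entire content is the formal identity $\sigma \circ \operatorname{Ad}(g) = \operatorname{Ad}(\sigma(g)) \circ \sigma$, together with the key input (already packaged into Lemma \ref{Lemma_g}) that the ambiguity between $g$ and $\sigma(g)$ washes out on $\pi_1$. The only subtlety to track is the semilinear form of the Galois action on cocharacter groups, but once one writes $(\sigma \cdot x)(t) = \sigma(x(\sigma^{-1}(t)))$, the verification is mechanical.
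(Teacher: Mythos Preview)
Your proof is correct and follows essentially the same route as the paper: both establish the identity $\sigma\cdot g_*(x)=\sigma(g)_*(\sigma\cdot x)$ at the cocharacter level, observe that $\sigma(g)$ also conjugates $T_1$ to $T_2$ because the tori are defined over $K$, and then invoke Lemma~\ref{Lemma_g} to replace $\sigma(g)_*$ by $g_*$ on $\pi_1$. The only difference is notational (your $\sigma$ and $\sigma(g)$ versus the paper's $\gamma$ and ${}^{\gamma}\!g$).
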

    
    \begin{proof}
    	Let
    	$$x_1\in\pi_1(G,T_1),\quad  x_2= g_*(x_1)\in\pi_1(G,T_2),\quad  \gamma\in\Gamma_K.$$
    	Then
    	$$^\gamma\!x_2=\,^\gamma\! g_*(\,^\gamma\! x_1)$$
    	where by abuse of notation we write $^\gamma\! g_*$ for $\,(^\gamma\! g)_*$.
    	We obtain from \eqref{e:2} that
    	$$^\gamma T_2={}^\gamma\!g \cdot {}^\gamma T_1\cdot {}^\gamma\!g^{-1}. $$
    	Since $T_1$ and $T_2$ are defined over $K$, we have $^\gamma T_1=T_1$ and $^\gamma T_2=T_2$.
    	Therefore, we obtain that
    	$$ T_2={}^\gamma\!g \cdot  T_1\cdot {}^\gamma\!g^{-1}. $$
    	Comparing with \eqref{e:2}, we see from Lemma \ref{Lemma_g} that $({}^\gamma\! g)_*=g_*$, that is,
    	$$^\gamma\!x_2=g_*(\,{}^\gamma\! x_1).$$
    	Thus our isomorphism $g_*$ preserves the $\Gamma_K$-action, as desired.
    \end{proof}
    
    \begin{lemma}\label{Lemma_N}
    	Let $T\subseteq G$ be a  maximal torus of a reductive group over a field $K$.
    	Write $N$ for the normalizer of $T$ in $G$.
    	Then $N(K^s)$, when acting  on $X_*(T)$ and $X_*(T_{\operatorname{sc}})$, acts on $\pi_1(G,T)$ trivially.
    \end{lemma}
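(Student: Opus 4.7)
The plan is to reduce the statement to showing that the Weyl group $W = N(K^s)/T(K^s)$ acts trivially on $\pi_1(G, T)$, and then invoke the defining property of $G_{\operatorname{sc}}$ being simply connected, namely that $X_*(T_{\operatorname{sc}})$ coincides with the coroot lattice.

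First I would observe that $T(K^s)$ acts by inner conjugation on the abelian group $T$, hence trivially on $X_*(T)$. Conjugation by elements of $N(K^s)$ preserves $T \subseteq G$ and, since $\rho\colon G_{\operatorname{sc}} \to G$ has central kernel, lifts canonically to an action on $T_{\operatorname{sc}} = \rho^{-1}(T)$; so the action on $X_*(T_{\operatorname{sc}})$ is well-defined and $\rho_*$-equivariant, which shows in particular that $\rho_* X_*(T_{\operatorname{sc}}) \subseteq X_*(T)$ is stable under $N(K^s)$, and the induced action of $W$ on $\pi_1(G, T)$ makes sense. Moreover, since $K^s$ is separably closed, $W$ is generated by the root reflections $s_\alpha$ for $\alpha \in \Phi(G, T)$, so it suffices to check that each such $s_\alpha$ acts trivially on $\pi_1(G, T)$.

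The key input is the explicit formula $s_\alpha(\lambda) - \lambda = -\langle \alpha, \lambda\rangle\,\alpha^\vee$, which lies in the coroot lattice $\mathbb{Z}\Phi^\vee \subseteq X_*(T)$. Now simple-connectedness of $G_{\operatorname{sc}}$ tells us that $X_*(T_{\operatorname{sc}})$ equals the coroot lattice of the root datum of $(G_{\operatorname{sc}}, T_{\operatorname{sc}})$, and $\rho_*$ identifies each coroot $\alpha^\vee_{\operatorname{sc}} \in X_*(T_{\operatorname{sc}})$ with the corresponding coroot $\alpha^\vee \in X_*(T)$. Hence $\mathbb{Z}\Phi^\vee \subseteq \rho_* X_*(T_{\operatorname{sc}})$, so $s_\alpha(\lambda) \equiv \lambda$ in $\pi_1(G, T)$, and the same argument handles the action on $X_*(T_{\operatorname{sc}})$ via $\rho_*$-equivariance.

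There is no serious obstacle here; the lemma is essentially a tautological consequence of the definitions once one has simple-connectedness of $G_{\operatorname{sc}}$. The only technical points requiring a moment of care are setting up the compatibility of the conjugation actions of $N(K^s)$ on $T$ and on $T_{\operatorname{sc}}$ through the central isogeny $\rho$, and matching coroots on the two sides under $\rho_*$—both of which are standard facts about reductive root data.
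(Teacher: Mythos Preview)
Your proposal is correct and follows essentially the same approach as the paper: reduce to the Weyl group, use that $W$ is generated by the reflections $s_\alpha$, apply the formula $s_\alpha(\lambda)-\lambda=-\langle\alpha,\lambda\rangle\alpha^\vee$, and conclude from $\alpha^\vee\in\rho_*X_*(T_{\operatorname{sc}})$. The paper's write-up is terser---it does not dwell on lifting the conjugation action through the central isogeny or on identifying $X_*(T_{\operatorname{sc}})$ with the coroot lattice---but the substance is the same.
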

    
    \begin{proof}
    	The group $N(K^s)$ acts on $X_*(T)$, $X_*(T_{\operatorname{sc}})$, and $\pi_1(G,T)$ via the Weyl group $W=W(G,T)=N(K^s)/T(K^s)$.
    	
    	Let $X^*(T)$ denote the character group of $T$ (over $K^s$).
    	Let $R=R(G,T)\subset X^*(T)$ and $R^\vee=R^\vee(G,T)\subset X_*(T)$ denote the corresponding root and coroot systems.
    	Then $W$ is generated by the reflections $s_\alpha$ for $\alpha\in R$.
    	It suffices to show that each $s_\alpha$ acts trivially on $\pi_1(G,T)$.
    	
    	Let $\alpha^\vee\in R^\vee$ denote the coroot corresponding to the root $\alpha$.
    	Then the reflection $s_\alpha$ acts on $X_*(T)$ by
    	$$ s_\alpha(u)= u-\langle\alpha,u\rangle \alpha^\vee $$
    	for $u\in X_*(T)$;
    	see \cite[Section 1.1]{springer1977reductive}.
    	Since $\alpha^\vee\in R^\vee\subseteq \rho_*X_*(T_{\operatorname{sc}})$,
    	we see that $s_\alpha$ indeed acts trivially on $\pi_1(G,T)=X_*(T)/\rho_*X_*(T_{\operatorname{sc}})$.
    	
    \end{proof}
    
		In our context, It follows that for a connected reductive group $G$ over $F$, we have $\Gamma=\Gal(F^s/F)$-equivariant identifications
		\begin{equation}\label{Eq_Borovoi}
			\pi_1(G) \cong X_*(T)/X_*(T_{\operatorname{sc}}) \cong X_*/\{\text{coroot lattice}\} \cong X^*(\widehat{T})/\Lambda_{\widehat{\Phi}}
		\end{equation}
		for any maximal torus $T$ of $G$.

	\subsection{The Kottwitz set}\label{Subsection_Kottwitz}
	
	In this subsection, we review the theory of the Kottwitz set. The main reference is \cite{kottwitz1985isocrystal}. For a more modern approach, see \cite{kottwitz2014bg}.
	
	Recall that the Kottwitz set $B(G)$ is the set of $\sigma$-conjugacy classes in $G(\breve{F})$. For any element $b \in G(\breve{F})$, we can associate a homomorphism $\nu: \mathbb{D} \to G$, where $\mathbb{D}$ is the diagonalizable group over $F$ with character group $\mathbb{Q}$. We say that an element $b \in G(\breve{F})$ is \emph{basic} if the corresponding homomorphism $\nu: \mathbb{D} \to G$ factors through the center of $G$. 
	Let $B(G)_{\basic}$ denote the set of basic elements in $B(G)$.
	
	\begin{theorem}[{\cite[2.4]{kottwitz1985isocrystal}}]\label{Thm_Kottwitz_tori}
		Let $T$ be a $F$-torus. There is a functorial (in $T$) isomorphism
		$$X_*(T)_{\Gamma} \cong B(T).$$
	\end{theorem}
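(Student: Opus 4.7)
The plan is to construct a natural transformation $\kappa_T \colon B(T) \to X_*(T)_{\Gamma}$ of functors on the category of $F$-tori, and verify it is an isomorphism by reducing from the case $T = \mathbb{G}_m$ along a resolution by induced tori. The starting observation is that, since $T$ is abelian, $\sigma$-conjugacy collapses to translation: two elements $t_1, t_2 \in T(\breve F)$ are $\sigma$-conjugate if and only if $t_1 t_2^{-1} \in (1-\sigma) T(\breve F)$. Hence $B(T) = T(\breve F)/(1-\sigma) T(\breve F)$ is naturally an abelian group, and functoriality in $T$ is immediate.

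For the base case $T = \mathbb{G}_m$, I would take $\kappa_{\mathbb{G}_m}$ to be the map induced by the normalized valuation $v \colon \breve F^* \to \mathbb{Z}$: it is $\sigma$-invariant, so descends to a homomorphism $B(\mathbb{G}_m) = \breve F^*/(1-\sigma) \breve F^* \to \mathbb{Z} = X_*(\mathbb{G}_m)_{\Gamma}$. Surjectivity is immediate, and injectivity reduces to showing that $(1-\sigma)$ is surjective on $\ker v = \mathcal{O}_{\breve F}^*$. On the residue field $\overline{\mathbb{F}_q}^*$ this is Lang's theorem, and a standard successive-approximation argument along the filtration by principal units propagates the surjectivity to all of $\mathcal{O}_{\breve F}^*$. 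More generally, for any torus $T$ split over an unramified extension one has $T(\breve F) \cong X_*(T) \otimes_{\mathbb{Z}} \breve F^*$ as $\sigma$-modules, and tensoring the short exact sequence for $\mathbb{G}_m$ with the flat $\mathbb{Z}$-module $X_*(T)$ promotes the argument into an isomorphism $\kappa_T$ in this case.

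For a general $F$-torus $T$, I would fix a finite Galois extension $E/F$ splitting $T$ and fit $T$ into a short exact sequence $1 \to S \to R \to T \to 1$ with $R = \Res_{E/F}(T_E)$ induced and $S$ a torus of smaller complexity, iterating on $S$ as necessary to obtain a resolution by induced tori. Both functors $B(-)$ and $X_*(-)_{\Gamma}$ send such a sequence to a long exact sequence: for $B(-)$ one uses the vanishing $H^1(\Gal(\breve F/F), T'(\breve F)) = 0$ for any torus $T'$ (Lang--Steinberg); for $X_*(-)_{\Gamma}$ it is the standard long exact sequence in group homology. For an induced torus $\Res_{E/F}\mathbb{G}_m$ with $E/F$ possibly ramified, $\kappa$ is an isomorphism by a direct analysis of $(E \otimes_F \breve F)^*$ as a $\sigma$-module matched with the Shapiro identification $X_*(\Res_{E/F}\mathbb{G}_m)_{\Gamma} \cong \mathbb{Z}$. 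The five lemma then propagates the isomorphism from $R$ to $T$. The main obstacle will be defining $\kappa_T$ naturally in $T$ for ramified tori at the outset (so that one is entitled to invoke functoriality in the five-lemma step) and checking that the connecting homomorphisms on both sides match under $\kappa$, which is a cocycle-level calculation carried out in \cite{kottwitz1985isocrystal}.
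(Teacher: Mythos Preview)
The paper does not give its own proof of this theorem: it is stated in the recollection subsection on the Kottwitz set and attributed directly to \cite[2.4]{kottwitz1985isocrystal} with no argument supplied. There is therefore nothing in the paper to compare your proposal against.

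That said, your sketch is a faithful outline of Kottwitz's original argument: identify $B(T)$ with the group $T(\breve F)/(1-\sigma)T(\breve F)$, handle $\mathbb{G}_m$ (and more generally induced tori) by an explicit valuation/Shapiro computation, and then propagate to arbitrary tori via a resolution by induced tori together with the five lemma. The one point to be careful about is the order of the logic at the end: you need the natural transformation $\kappa_T$ defined for \emph{all} tori before you can run the five-lemma comparison, so the construction of $\kappa_T$ must not itself depend on the resolution step. Kottwitz handles this by defining $\kappa_T$ intrinsically (via the valuation pairing with characters) and only afterwards checking bijectivity by d\'evissage; your write-up should make that separation explicit rather than leaving it as ``the main obstacle.''
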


	\begin{theorem}[{\cite[Proposition 5.3]{kottwitz1985isocrystal}}]\label{Thm_factor_basic}
		Let $T$ be an elliptic maximal $F$-torus of $G$. The image of the natural map 
		$B(T) \to B(G)$
		is $B(G)_{\basic}$.
	\end{theorem}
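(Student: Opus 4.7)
The plan is to prove the two inclusions separately. The containment of the image in $B(G)_{\basic}$ comes from a direct Newton-point computation using the ellipticity hypothesis, while surjectivity onto $B(G)_{\basic}$ reduces, via Theorem \ref{Thm_Kottwitz_tori} and Kottwitz's parametrization $B(G)_{\basic}\cong\pi_1(G)_\Gamma$, to a surjectivity statement on cocharacter lattices that has already been established in the excerpt.

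For the containment $\operatorname{im}(B(T)\to B(G))\subseteq B(G)_{\basic}$, I would take $b\in T(\breve F)$ and inspect its Newton cocharacter $\nu_b\in X_*(T)_{\mathbb{Q}}^{\Gamma}$. The hypothesis that $T$ is elliptic means $T/Z(G)$ is anisotropic over $F$, equivalently $X_*(T/Z(G))_{\mathbb{Q}}^{\Gamma}=0$. Taking $\mathbb{Q}$-coefficients and $\Gamma$-invariants of the exact sequence
\begin{equation*}
0\to X_*(Z(G))\to X_*(T)\to X_*(T/Z(G))\to 0,
\end{equation*}
and noting that $\Gamma$-invariants are left exact while the rightmost term vanishes rationally, I would conclude $X_*(T)_{\mathbb{Q}}^{\Gamma}=X_*(Z(G))_{\mathbb{Q}}^{\Gamma}$. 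Hence $\nu_b$ already lies in $Z(G)$, which is exactly the definition of basicity for the image of $b$ in $G(\breve F)$.

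For the reverse inclusion, I would invoke the functoriality of the Kottwitz map in the inclusion $T\hookrightarrow G$ to obtain a commutative square
\begin{equation*}
\begin{tikzcd}
B(T) \ar[r] \ar[d, "\sim"'] & B(G)_{\basic} \ar[d, "\sim"]\\
X_*(T)_\Gamma \ar[r] & \pi_1(G)_\Gamma
\end{tikzcd}
\end{equation*}
in which the left vertical arrow is Theorem \ref{Thm_Kottwitz_tori} and the right vertical arrow is Kottwitz's classification of basic elements by the $\pi_1$-valued invariant. The bottom arrow is induced by the canonical projection $X_*(T)\twoheadrightarrow \pi_1(G)$ recalled in Equation (\ref{Eq_Borovoi}); since coinvariants are right exact, this map is surjective, and therefore so is the top arrow.

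The only genuine content lies in the two black-box ingredients, namely the description of the Newton point of an element of $T(\breve F)$ as an element of $X_*(T)_{\mathbb{Q}}^{\Gamma}$ and the identification of the Kottwitz invariant on $B(G)_{\basic}$ with $\pi_1(G)_\Gamma$; both are proved in \cite{kottwitz1985isocrystal} and may be cited. The only step that requires a small argument of our own is the rational vanishing $X_*(T/Z(G))_{\mathbb{Q}}^{\Gamma}=0$, which is immediate from the definition of ellipticity via anisotropy, so I do not expect any real obstacle.
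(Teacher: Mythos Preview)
The paper does not give its own proof of this statement; it is recorded as a citation to \cite[Proposition~5.3]{kottwitz1985isocrystal} and used as a black box. Your sketch is the standard argument and is essentially how Kottwitz proves it.

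One point of care in the surjectivity half: you invoke the bijection $B(G)_{\basic}\cong\pi_1(G)_\Gamma$, which in this paper is the theorem stated immediately \emph{after} Theorem~\ref{Thm_factor_basic} (Kottwitz's Proposition~5.6). In Kottwitz's own development, the surjectivity of the map $\kappa\colon B(G)_{\basic}\to\pi_1(G)_\Gamma$ is obtained precisely by passing through an elliptic maximal torus, i.e.\ by using Proposition~5.3. So citing the full bijection here risks circularity. Your argument only needs that $\kappa$ is \emph{injective} on $B(G)_{\basic}$ together with the commutativity of your square: from the surjectivity of $X_*(T)_\Gamma\to\pi_1(G)_\Gamma$ and the first inclusion you already proved, you get that the composite $B(T)\to B(G)_{\basic}\xrightarrow{\kappa}\pi_1(G)_\Gamma$ is surjective, and injectivity of $\kappa$ then forces $B(T)\to B(G)_{\basic}$ to be surjective. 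The injectivity of $\kappa$ on basic classes is established independently in \cite{kottwitz1985isocrystal}, so with this small rephrasing your proof is complete and non-circular. A minor cosmetic remark: in the short exact sequence you wrote, $Z(G)$ need not be a torus, so $X_*(Z(G))$ should be read as rational cocharacters factoring through the center (equivalently, replace $Z(G)$ by its identity component); this does not affect the conclusion since you work rationally.
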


	\begin{theorem}[{\cite[Proposition 5.6]{kottwitz1985isocrystal}}]
		There is a unique functorial isomorphism
		$$B(G)_{\basic} \cong X^*(\ZG)$$
		that agrees with Theorem \ref{Thm_Kottwitz_tori} for tori.
	\end{theorem}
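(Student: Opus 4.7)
The plan is to construct the isomorphism by reducing to the torus case via Theorem~\ref{Thm_factor_basic} and then to identify its kernel using Steinberg's vanishing $H^1(\breve{F}, G_{\operatorname{sc}}) = 1$. First I identify the target: since $Z(\widehat{G})$ is the diagonalizable group with character lattice $X^*(\widehat{T})/\Lambda_{\widehat{\Phi}} \cong \pi_1(G)$ by \eqref{Eq_Borovoi}, and passing to $\Gamma$-fixed points of a diagonalizable group dualizes on character groups to passing to $\Gamma$-coinvariants, I get a canonical identification $X^*(\ZG) \cong \pi_1(G)_{\Gamma}$.

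Next, I fix an elliptic maximal $F$-torus $T \subseteq G$. Theorem~\ref{Thm_Kottwitz_tori} gives $B(T) \cong X_*(T)_{\Gamma}$, while Theorem~\ref{Thm_factor_basic} says the natural map $B(T) \to B(G)$ surjects onto $B(G)_{\basic}$. Composing the inverse Kottwitz isomorphism with the tautological surjection $X_*(T)_{\Gamma} \twoheadrightarrow \pi_1(G)_{\Gamma} \cong X^*(\ZG)$ coming from \eqref{Eq_Borovoi} produces a second surjection out of $B(T)$. The core of the proof is to verify that the two surjections
$$B(T) \twoheadrightarrow B(G)_{\basic} \qquad \text{and} \qquad B(T) \twoheadrightarrow X^*(\ZG)$$
have the same kernel, so that they descend to a canonical isomorphism $B(G)_{\basic} \cong X^*(\ZG)$.

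The main obstacle is this kernel comparison. One containment is straightforward: any class in the image of $X_*(T_{\operatorname{sc}})_{\Gamma} \to X_*(T)_{\Gamma} \cong B(T)$ lifts through $B(T_{\operatorname{sc}}) \to B(G_{\operatorname{sc}})$, and the latter set is trivial by Steinberg's theorem $H^1(\breve{F}, G_{\operatorname{sc}}) = 1$, so such a class vanishes in $B(G)$. For the reverse containment I would construct an explicit inverse: given $\chi \in X^*(\ZG)$, lift to some $\tilde\chi \in X_*(T)_{\Gamma}$ using \eqref{Eq_Borovoi} and apply the Kottwitz map to land in $B(G)_{\basic}$; the previous containment shows this is independent of the lift $\tilde\chi$, and surjectivity follows from Theorem~\ref{Thm_factor_basic}. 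Uniqueness is then forced by the required compatibility with Theorem~\ref{Thm_Kottwitz_tori}, since by Theorem~\ref{Thm_factor_basic} every basic class comes from an elliptic torus; independence of the choice of $T$ follows from the canonical $\Gamma$-equivariant identification of Borovoi fundamental groups established in Subsection~\ref{Subsection_Borovoi}; and functoriality in $G$ reduces to functoriality of each building block, namely the Kottwitz map for tori, the quotient $X_*(T) \twoheadrightarrow \pi_1(G)$, and the description of $\ZG$.
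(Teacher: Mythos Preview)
The paper does not give its own proof of this theorem; it is stated as a citation of \cite[Proposition 5.6]{kottwitz1985isocrystal} with no accompanying argument. So there is nothing to compare your proposal against in the paper itself.

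That said, your sketch has a genuine gap. You claim that $B(G_{\operatorname{sc}})$ is trivial by ``Steinberg's theorem $H^1(\breve{F}, G_{\operatorname{sc}}) = 1$'', but $B(G_{\operatorname{sc}})$ is the set of $\sigma$-conjugacy classes in $G_{\operatorname{sc}}(\breve{F})$, not a Galois cohomology set over $\breve{F}$, and it is \emph{not} trivial: for instance $B(\operatorname{SL}_2)$ contains many non-basic classes. What is true, and what Kottwitz actually uses, is that the image of $B(T_{\operatorname{sc}}) \to B(G_{\operatorname{sc}})$ lands in $B(G_{\operatorname{sc}})_{\basic}$ (by the elliptic-torus statement you cite as Theorem~\ref{Thm_factor_basic}), and that $B(G_{\operatorname{sc}})_{\basic}$ is trivial. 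Proving the latter requires a separate argument---essentially that basic classes with trivial Newton point are parametrized by $H^1(F, G_{\operatorname{sc}})$, which vanishes by Kneser---and is close to a special case of the very theorem you are proving, so you need to handle it directly rather than by appeal to Steinberg.

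There is a second gap: you only argue one kernel containment. Your ``reverse containment'' paragraph constructs a well-defined map $X^*(\ZG) \to B(G)_{\basic}$ and observes it is surjective, but never shows it is injective, which is exactly the missing containment $\ker(B(T) \to B(G)_{\basic}) \subseteq \ker(B(T) \to X^*(\ZG))$. You should also note that $B(G)_{\basic}$ is a priori only a pointed set, so speaking of kernels requires care; and the existence of an elliptic maximal torus over a non-archimedean local field, while true, deserves a reference.
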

	
	\begin{lemma}\label{Lemma:ZG}
		$\pi_1(G)_{\Gamma} \cong X^*(\ZG)$.
	\end{lemma}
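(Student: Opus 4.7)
The plan is to identify both sides with explicit quotients of character/cocharacter lattices and then appeal to the duality between fixed points and coinvariants for diagonalizable group schemes. Recall from Equation \eqref{Eq_Borovoi} that $\pi_1(G) \cong X^*(\widehat{T})/\LPhi$ as $\Gamma$-modules, and standardly the center of $\widehat{G}$ is the diagonalizable group scheme over $\Ql$ whose character group is precisely $X^*(\widehat{T})/\LPhi$ (one can see this by embedding $Z(\widehat{G})$ into $\widehat{T}$ and identifying it with the intersection of kernels of the roots). Thus $Z(\widehat{G})$ is diagonalizable with character group $X^*(Z(\widehat{G})) \cong \pi_1(G)$ as $\Gamma$-modules.

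The next step is a general statement: for any diagonalizable group scheme $D$ over $\Ql$ with character group $M$ carrying a $\Gamma$-action that factors through a finite quotient (which holds here, since $\Gamma$ acts on the finite-dimensional $\widehat{G}$ preserving a pinning), the closed subgroup scheme $D^{\Gamma}$ is diagonalizable with character group $M_{\Gamma}$. To see this, note that for any $\Ql$-algebra $R$ (on which $\Gamma$ acts trivially),
$$D^{\Gamma}(R) = D(R)^{\Gamma} = \Hom_{\Gamma}(M, R^{\times}),$$
and a homomorphism $M \to R^{\times}$ is $\Gamma$-equivariant precisely when it kills the submodule generated by all $(\gamma-1)m$, hence factors through $M_{\Gamma}$. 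Thus $D^{\Gamma}(R) = \Hom(M_{\Gamma}, R^{\times})$, which identifies $D^{\Gamma}$ as the diagonalizable group scheme with character group $M_{\Gamma}$.

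Applying this with $D = Z(\widehat{G})$ and $M = \pi_1(G)$ yields $X^*(\ZG) \cong \pi_1(G)_{\Gamma}$, as desired. There is no real obstacle here; the only point requiring a moment's care is the finiteness of the $\Gamma$-action, which is automatic in our setting, and the observation that $\Gamma$ acts trivially on the coefficient ring $\Ql$, so that the equivariance condition collapses to a coinvariance condition on $M$.
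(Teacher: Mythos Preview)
Your proof is correct and follows essentially the same approach as the paper: both identify $X^*(Z(\widehat{G}))$ with $\pi_1(G)$ via the short exact sequence $1 \to Z(\widehat{G}) \to \widehat{T} \to \widehat{T}/Z(\widehat{G}) \to 1$ (equivalently, by recognizing $Z(\widehat{G})$ as the intersection of the kernels of the roots), and then pass to $\Gamma$-fixed points using the standard duality between fixed points of diagonalizable groups and coinvariants of their character groups. The paper's proof is simply more terse, leaving the fixed-points/coinvariants step implicit, whereas you spell it out.
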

	
	\begin{proof}
			This follows from the short exact sequence of diagonalizable group schemes
			$$1 \to Z(\widehat{G}) \to \widehat{T} \to \widehat{T}/Z(\widehat{G}) \to 1.$$
	\end{proof}

	\subsection{Recollections on Fargues--Scholze}\label{Subsection_FS}
	
	In \cite{fargues2021geometrization}, Fargues--Scholze formulated the local Langlands correspondence as the geometric Langlands correspondence over the Fargues--Fontaine curve. We briefly summarize the results that we will use. We refer the readers to \cite[Introduction]{fargues2021geometrization} for details.

	Let $\Bun_G$ be the moduli stack of $G$-bundles over the Fargues--Fontaine curve. The stack $\Bun_G$ admits a stratification by locally closed substacks
	$$i_b: \Bun_G^b \subseteq \Bun_G$$
	for $b \in B(G)$ (\cite[Theorem III.0.2]{fargues2021geometrization}). For $b \in B(G)$, $\Bun_G^b$ is isomorphic to the classifying stack $[*/\tilde{G_b}]$, where $\tilde{G_b}$ is an extension of $G_b(F)$ by a connected unipotent group. Fargues and Scholze define in \cite[Chp.~VII]{fargues2021geometrization} a derived category of $\ell$-adic sheaves
	$$\operatorname{D}(\Bun_G, \Ql):=\operatorname{D}_{\operatorname{lis}}(\Bun_G, \Ql)$$
	on $\Bun_G$. $\operatorname{D}(\Bun_G, \Ql)$ admits a semi-orthogonal decomposition into $\operatorname{D}(\Bun_G^b, \Ql)$ with $b \in B(G)$, and
	$$\operatorname{D}(\Bun_G^b, \Ql) \cong \operatorname{D}([*/G_b(F)], \Ql) \cong \operatorname{D}(G_b(F), \Ql):=\operatorname{D}(\Rep_{\Ql}G_b(F))$$
	is equivalent to the derived category of smooth $G_b(F)$-representations.
	
	Inside $\Bun_G$, there is an open locus $\Bun_G^{\operatorname{ss}}$ consisting of semistable $G$-bundles. The points of $\Bun_G^{\operatorname{ss}}$ correspond to basic elements in $B(G)$, and we have an open and closed decomposition
	$$\Bun_G^{\operatorname{ss}} \cong \sqcup_{b \in B(G)_{\basic}}\Bun_G^b.$$
	Such $\Bun_G^b$ corresponding to $b \in B(G)_{\basic}$ is called a \emph{basic stratum}.

	For any representation $V$ of $\widehat{G}$, Fargues and Scholze define in 
	\cite[Chp.~IX]{fargues2021geometrization} a \emph{Hecke operator}
	$$T_V: \operatorname{D}(\Bun_G, \Ql) \to \operatorname{D}(\Bun_G, \Ql)^{BW_F},$$
	where the superscript $BW_F$ means $W_F$-equivariant objects. For most of the time, we forget the $W_F$-equivariance and simply view $T_V$ as an endomorphism of $\operatorname{D}(\Bun_G, \Ql)$.
	
	\begin{lemma}[{\cite[Lemma 5.3.2]{zou2022categorical}}]\label{Lemma_pi_1}
		The Hecke operators $T_V$ are compatible with $\pi_1(G)_{\Gamma}$-gradings.
	\end{lemma}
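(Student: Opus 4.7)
The plan is to identify the $\pi_1(G)_{\Gamma}$-grading on $\operatorname{D}(\Bun_G, \Ql)$ with the grading on $V$ by central character under $Z(\widehat{G})^{\Gamma}$, and then to check homogeneity of the Hecke correspondence with respect to these gradings. First, using Lemma \ref{Lemma:ZG}, one has $\pi_1(G)_{\Gamma} \cong X^*(Z(\widehat{G})^{\Gamma})$. Since $Z(\widehat{G})^{\Gamma}$ sits in $\widehat{G}$, any $V \in \Rep_{\widehat{G}}(\Ql)$ decomposes canonically as $V = \bigoplus_{\alpha \in \pi_1(G)_{\Gamma}} V_{\alpha}$, the $\alpha$-isotypic summands under $Z(\widehat{G})^{\Gamma}$. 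On the other hand, the Kottwitz map $B(G) \twoheadrightarrow \pi_1(G)_{\Gamma}$ identifies $\pi_0(\Bun_G)$ with $\pi_1(G)_{\Gamma}$, yielding the open-and-closed decomposition $\Bun_G = \bigsqcup_{\alpha} \Bun_G^{\alpha}$ and hence the grading $\operatorname{D}(\Bun_G, \Ql) = \bigoplus_{\alpha} \operatorname{D}(\Bun_G^{\alpha}, \Ql)$.

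The statement to prove then reduces to the claim that $T_{V_{\alpha}}$ sends $\operatorname{D}(\Bun_G^{\beta}, \Ql)$ into $\operatorname{D}(\Bun_G^{\beta+\alpha}, \Ql)$ for every $\beta \in \pi_1(G)_{\Gamma}$. I would attack this by unwinding the construction of $T_V$ as a pull--push along the Hecke correspondence $\overleftarrow{h}, \overrightarrow{h}\colon \operatorname{Hck}_G \to \Bun_G$, twisted by the Satake sheaf $\mathcal{S}_V$ provided by Fargues--Scholze's geometric Satake. The Hecke stack likewise decomposes by the ``type'' of modification, indexed by $\pi_1(G)_{\Gamma}$: the locus where the right bundle has degree $\beta + \alpha$ and the left bundle has degree $\beta$ is open-and-closed in $\operatorname{Hck}_G$. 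It then suffices to show that $\mathcal{S}_{V_{\alpha}}$ is supported precisely on the type-$\alpha$ component of $\operatorname{Hck}_G$.

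The main obstacle is exactly this last compatibility of the Fargues--Scholze Satake equivalence with the central grading. It is a formal consequence of the construction: under geometric Satake, the action of $Z(\widehat{G})$ on a representation matches, on the sheaf side, the decomposition of the Hecke stack by the image of the cocharacter in $X^*(\widehat{T})/\Lambda_{\widehat{\Phi}} \cong \pi_1(G)$; taking $\Gamma$-coinvariants to pass from the local to the global Hecke stack then yields the $\pi_1(G)_{\Gamma}$-grading. Once this input is in place, the conclusion is immediate from base change along the decompositions of $\Bun_G$ and $\operatorname{Hck}_G$: for $\mathcal{F} \in \operatorname{D}(\Bun_G^{\beta}, \Ql)$, the support of $\overleftarrow{h}^{*}\mathcal{F} \otimes \mathcal{S}_{V_{\alpha}}$ lies over $\Bun_G^{\beta+\alpha}$ under $\overrightarrow{h}$, so $T_{V_{\alpha}}(\mathcal{F}) \in \operatorname{D}(\Bun_G^{\beta+\alpha}, \Ql)$, which is the desired compatibility.
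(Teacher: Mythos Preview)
The paper does not prove this lemma at all: it is stated with a bare citation to \cite[Lemma 5.3.2]{zou2022categorical} and no argument is given. Your sketch is a correct outline of the underlying reason the result holds, and it is essentially the argument one finds in the cited reference: identify $\pi_1(G)_{\Gamma}$ with $X^*(Z(\widehat{G})^{\Gamma})$ via Lemma~\ref{Lemma:ZG}, use that $\pi_0(\Bun_G) \cong \pi_1(G)_{\Gamma}$ via the Kottwitz map, and then check that geometric Satake matches the $Z(\widehat{G})^{\Gamma}$-isotypic decomposition of $V$ with the decomposition of the Hecke stack into open-and-closed pieces indexed by the type of the modification. The only point where you defer to an outside input---that the Satake sheaf $\mathcal{S}_{V_{\alpha}}$ is supported on the type-$\alpha$ component---is exactly the substantive content, and it is indeed a standard feature of the Fargues--Scholze geometric Satake (the connected components of the affine Grassmannian are already indexed by $\pi_1(G)$, and the $\Gamma$-descent to the relative curve passes to coinvariants). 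So your proposal is correct and in fact supplies more than the paper does.
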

	
	\begin{remark}\label{Remark:ULA_compact_finite}
		There are two finiteness notions for objects in $\operatorname{D}(\Bun_G, \Ql)$ -- $\ULA$ object and compact object. An object $A \in \operatorname{D}(\Bun_G, \Ql)$ that is both $\ULA$ and compact is called \textbf{finite}. Explicitly, $A$ is finite if and only if the following two properties hold: $i_b^*A \in \operatorname{D}(G_b(F), \Ql)$ is zero for all but finitely many $b \in B(G)$, and $\oplus_nH^n(i_b^*A)$ is a finite length $G_b(F)$-representation for every $b \in B(G)$ (\cite[Definition 1.6.2]{hansen2023beijing}).
	\end{remark}
	
	\begin{lemma}[{\cite[Theorem IX.2.2]{fargues2021geometrization}}]\label{Lem_Hck_ULA}
		The Hecke operators preserve $\ULA$ objects and compact objects.
	\end{lemma}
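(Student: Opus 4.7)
The plan is to unwind the definition of $T_V$ as a cohomological correspondence and verify separately that each step in the construction preserves $\ULA$ objects and compact objects. Recall that
$$T_V(A) \;\cong\; R(h^{\to})_{!}\bigl((h^{\leftarrow})^*A \otimes \mathcal{S}_V\bigr),$$
where $h^{\leftarrow}, h^{\to}: \Hck_G \to \Bun_G$ are the two legs of the local Hecke stack (with the target leg also remembering the modification point in $\operatorname{Div}^1$), and $\mathcal{S}_V \in \operatorname{D}(\Hck_G, \Ql)$ is the Satake kernel attached to $V$ via the geometric Satake equivalence of \cite{fargues2021geometrization}. The structural inputs I will need are: (i) $\mathcal{S}_V$ is $\ULA$ over $\operatorname{Div}^1$ with support bounded by the Schubert cell determined by the highest weights appearing in $V$; (ii) restricted to this bounded support, $h^{\to}$ is ind-proper relative to $\operatorname{Div}^1$; (iii) $h^{\leftarrow}$ is cohomologically smooth.

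For $\ULA$ preservation, I would chain three facts: cohomologically smooth pullback along $h^{\leftarrow}$ preserves $\ULA$; the tensor product of two $\ULA$ sheaves relative to $\operatorname{Div}^1$ is again $\ULA$; and proper pushforward along the restriction of $h^{\to}$ to the bounded support of $\mathcal{S}_V$ preserves $\ULA$. Composing these three steps shows that $T_V(A)$ is $\ULA$ whenever $A$ is.

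For compactness, the same three-step factorization is used, but with different input properties. Cohomologically smooth pullback is continuous and preserves compact objects; it suffices to check this on the generating family $\{i_{b,!}\cInd_K^{G_b(F)}\mathbbm{1}\}$ as $b$ runs over $B(G)$ and $K \subseteq G_b(F)$ runs over compact open subgroups, where smoothness of $h^{\leftarrow}$ ensures that the pullback of such a generator is again built out of compact cells. Tensoring with the compact Satake kernel preserves compactness. Finally, proper pushforward has a continuous right adjoint (namely $(h^{\to})^!$, which differs from $(h^{\to})^*$ by a twist since the restricted map is cohomologically smooth on the bounded support of $\mathcal{S}_V$), so it preserves compactness as well.

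The main obstacle is not the formal factorization above but the justification of the structural inputs (i)--(iii) --- cohomological smoothness of $h^{\leftarrow}$, ind-properness of the restriction of $h^{\to}$ to Schubert cells, and the $\ULA$-ness of the Satake kernel --- which are substantive pieces of the Fargues--Scholze framework developed in Chapters IV, VI, and IX of \cite{fargues2021geometrization}. Granting those, the argument reduces to a formal diagram chase, which is exactly the content of \cite[Theorem IX.2.2]{fargues2021geometrization}, and I would ultimately defer to that proof.
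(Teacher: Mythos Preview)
The paper does not prove this lemma at all: it is stated purely as a citation of \cite[Theorem IX.2.2]{fargues2021geometrization}, with no accompanying argument. Your proposal is therefore not competing against any proof in the paper but is instead a sketch of what the cited Fargues--Scholze argument does. As such a sketch it is broadly accurate --- the factorization through pullback along $h^{\leftarrow}$, tensor with the Satake kernel, and pushforward along $h^{\to}$, together with the structural inputs (i)--(iii), is indeed the backbone of the proof in \cite{fargues2021geometrization} --- and you yourself correctly flag that the real content lies in those structural inputs and defer to the reference for them. For the purposes of this paper, simply citing the result is the appropriate level of detail; your sketch adds nothing the paper needs, though it is not wrong.
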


	Fargues and Scholze define in \cite[Chp.~VIII]{fargues2021geometrization} a stack of $L$-parameters $$Z^1(W_F, \widehat{G})/\widehat{G}.$$ Let 
	$$\mathcal{Z}(G(F), \Ql):=\End(\id_{\operatorname{D}(G(F), \Ql)})$$
	be the Bernstein center of $\operatorname{D}(G(F), \Ql)$. Using the Hecke operators and the formalism of excursion,  Fargues--Scholze construct a map
	$$\Psi_G: \mathcal{O}(Z^1(W_F, \widehat{G})/\widehat{G}) \to \mathcal{Z}(G(F), \Ql).$$
	For any irreducible representation $\pi'$ of $G(F)$, 
	the composition 
	
	\begin{center}
		\begin{tikzcd}
			{\mathcal{O}(Z^1(W_F, \widehat{G})/\widehat{G})} & {\End(\id)} & {\End(\pi')} & \Ql
			\arrow["{\Psi_G}", from=1-1, to=1-2]
			\arrow[from=1-2, to=1-3]
			\arrow["\cong", from=1-3, to=1-4]
		\end{tikzcd}
	\end{center}
	gives rise to a $\Ql$-point of the coarse moduli space of $L$-parameters, i.e., a semisimplified $L$-parameter, which we will denote by $\varphi_{\pi'}^{\FS}$. We call $\varphi_{\pi'}^{\FS}$ the \emph{Fargues--Scholze $L$-parameter} of $\pi'$.
	
	Moreover, the map $\Psi_G$ lifts to a map to 
	$$\mathcal{Z}^{\operatorname{geom}}(G, \Ql):=\End(\id_{\operatorname{D}(\Bun_G, \Ql)})$$
	such that the following diagram commutes (\cite[Corollary IX.0.3]{fargues2021geometrization}):
	
\[\begin{tikzcd}
	{\mathcal{O}(Z^1(W_F, \widehat{G})/\widehat{G})} & {\mathcal{Z}^{\operatorname{geom}}(G, \Ql)} \\
	& {\mathcal{Z}(G(F), \Ql)}
	\arrow["\Psi", from=1-1, to=1-2]
	\arrow["{\Psi_G}"', from=1-1, to=2-2]
	\arrow[from=1-2, to=2-2]
\end{tikzcd}.\]

    \begin{lemma}[{\cite[Theorem IX.0.5 (iii)]{fargues2021geometrization}}]\label{Lemma_central_char}
			The correspondence $\pi' \mapsto \varphi_{\pi'}^{\FS}$ is compatible with central characters.
	\end{lemma}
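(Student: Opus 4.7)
The lemma is precisely \cite[Theorem IX.0.5 (iii)]{fargues2021geometrization}, so I will outline the strategy one would follow to reprove it rather than a full independent proof. The plan is to match two characters of $Z(G)(F)$: on the automorphic side, the central character $\chi_{\pi'}$ by which $Z(G)(F)$ acts on $\pi'$; on the spectral side, the character of $Z(G)(F)$ obtained from $\varphi_{\pi'}^{\FS}$ by composing with the natural projection $\widehat{G} \twoheadrightarrow \widehat{G}^{\text{ab}}$ and applying local Langlands for the diagonalizable group $Z(G)$ (which is classical, agreeing with local class field theory on each factor).

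First I would reduce the problem to testing on elements of the form $\mu(\varpi)$, where $\varpi$ is a uniformizer of $F$ and $\mu: \mathbb{G}_m \to Z(G)$ is a central cocharacter, since such elements generate a finite-index subgroup of $Z(G)(F)$ and characters agreeing on them agree up to a finite unramified twist, which can be handled separately by restricting to $Z(G)(\mathcal{O}_F)$. For such a central $\mu$, the highest weight representation $V_\mu$ of $\widehat{G}$ factors through $\widehat{G}^{\text{ab}}$ and is one-dimensional, so the associated Hecke operator $T_{V_\mu}$ is particularly simple.

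The key geometric step is to identify $T_{V_\mu}$ for central $\mu$ as a concrete equivalence on $\operatorname{D}(\Bun_G, \Ql)$: it should act as the composition of the $\pi_1(G)_{\Gamma}$-grading shift produced by $\mu$ (Lemma \ref{Lemma_pi_1}) with the $W_F$-equivariant twist implementing the central action. Concretely, on the open stratum $[*/G(F)] \subseteq \Bun_G$, the Hecke correspondence at a central cocharacter preserves the $G$-bundle up to a twist by $\mu(\varpi) \in Z(G)(F)$, so $T_{V_\mu}$ acts on $i_{1!}\pi'$ as multiplication by $\chi_{\pi'}(\mu(\varpi))$ (tensored with the one-dimensional Galois representation of weight $\mu$). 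On the other hand, the map $\Psi_G$ identifies the endomorphism of $\pi'$ induced by the excursion operator attached to $V_\mu$ and a Frobenius element with $\operatorname{tr}(\varphi_{\pi'}^{\FS}(\operatorname{Frob}) \mid V_\mu)$, which is exactly the value of the central character corresponding to $\varphi_{\pi'}^{\FS}$ at $\mu(\varpi)$.

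The main obstacle---and what Fargues--Scholze actually carry out in detail---is the careful bookkeeping tying together the Kottwitz isomorphism $\pi_1(G)_\Gamma \cong X^*(Z(\widehat{G})^\Gamma)$, the geometric Satake realization of $V_\mu$ at central weights, and the normalization of local class field theory on $Z(G)(F)$. Once these identifications are pinned down, the compatibility reduces to comparing two explicit one-dimensional characters of $W_F$, which agree by construction.
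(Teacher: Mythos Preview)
The paper does not give any proof of this lemma; it is stated purely as a citation to \cite[Theorem IX.0.5 (iii)]{fargues2021geometrization} with no accompanying argument. Your proposal, by contrast, sketches the actual content of the Fargues--Scholze proof, and the outline you give---reducing to central cocharacters $\mu$, identifying the Hecke operator $T_{V_\mu}$ for one-dimensional $V_\mu$ as a concrete twist on $\Bun_G$, and comparing with the excursion-operator description of $\varphi_{\pi'}^{\FS}$---is broadly faithful to how that reference proceeds.

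Since the paper's ``proof'' is just the citation, there is nothing to compare at the level of argument. Your sketch is a reasonable summary of the strategy, though as you yourself note it is not self-contained: the reduction step (that values at $\mu(\varpi)$ together with the restriction to $Z(G)(\mathcal{O}_F)$ determine the character) requires more care when $Z(G)$ is not a split torus, and the precise identification of $T_{V_\mu}$ on the open stratum with the central twist is exactly the content one must extract from geometric Satake at minuscule central weights. For the purposes of this paper, simply invoking the reference as the authors do is entirely appropriate.
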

	
	\begin{lemma}[{\cite[Corollary IX.7.3]{fargues2021geometrization}}]\label{Lemma_parabolic}
		The correspondence $\pi' \mapsto \varphi_{\pi'}^{\operatorname{FS}}$ is compatible with parabolic induction.
	\end{lemma}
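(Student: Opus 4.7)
The plan is to deduce the compatibility with parabolic induction from the compatibility of the geometric Eisenstein/constant-term functors between $\Bun_M$ and $\Bun_G$ with the Hecke operators. Concretely, I want to show that the diagram
\[
\begin{tikzcd}
\mathcal{O}(Z^1(W_F,\widehat{G})/\widehat{G}) \arrow[r,"\Psi_G"]\arrow[d,"\operatorname{res}"'] & \mathcal{Z}(G(F),\Ql)\arrow[d] \\
\mathcal{O}(Z^1(W_F,\widehat{M})/\widehat{M})\arrow[r,"\Psi_M"'] & \mathcal{Z}(M(F),\Ql)
\end{tikzcd}
\]
commutes, where the right vertical map is the one reflecting the fact that the Bernstein center of $G(F)$ acts on any $\operatorname{Ind}_P^G \sigma$ through the Bernstein center of $M(F)$ (so that on the irreducible constituents of $\operatorname{Ind}_P^G\sigma$, the $G$-side acts through $M$-side scalars). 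Commutativity of this square immediately yields the lemma: if $\pi' \hookrightarrow \operatorname{Ind}_P^G \sigma$, then $\Psi_G$-scalars on $\pi'$ factor through $\Psi_M$-scalars on $\sigma$, so the $\Ql$-point of the coarse moduli induced by $\pi'$ on the left is the image of the $\Ql$-point induced by $\sigma$ on the right, which is exactly the assertion that $\varphi_{\pi'}^{\FS}$ is the composition $W_F \xrightarrow{\varphi_\sigma^{\FS}} \widehat{M} \hookrightarrow \widehat{G}$.

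First I would introduce the geometric parabolic induction functor $\operatorname{Eis}_P \colon \operatorname{D}(\Bun_M)\to\operatorname{D}(\Bun_G)$ defined via the Hecke-type correspondence $\Bun_M \leftarrow \Bun_P \to \Bun_G$ (and its adjoint constant term $\operatorname{CT}_P$), and recall that on the open substack corresponding to the trivial $M$-bundle, $\operatorname{Eis}_P$ realizes usual normalized parabolic induction $\operatorname{Ind}_P^G$ up to the shift/twist by the modulus character, so Eisenstein/constant-term controls classical parabolic induction. Then the critical input is the compatibility of Hecke operators with Eisenstein: for every $V\in\Rep_{\Ql}(\widehat{G})$,
\[
T_V^G \circ \operatorname{Eis}_P \;\cong\; \operatorname{Eis}_P \circ T_{V|_{\widehat{M}}}^M,
\]
equivariantly for $W_F$ (using the embedding $\widehat{M}\hookrightarrow\widehat{G}$ to restrict representations). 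This is the sheaf-theoretic incarnation of the constant-term theorem from geometric Langlands, and it should be established by base changing the local Hecke correspondence along $\Bun_P \to \Bun_G$ and analyzing the resulting semi-infinite diagram, invoking the hyperbolic localization / geometric Satake compatibility that relates convolution of $V$ with the direct sum over weights of $V$ as an $\widehat{M}$-representation, with a suitable semisimplicity statement.

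From this Hecke compatibility, I then propagate to excursion operators: an excursion datum $(I,V,\alpha,\beta,(\gamma_i))$ for $\widehat{G}$ restricts to an excursion datum for $\widehat{M}$ whenever $\alpha,\beta$ are $\widehat{M}$-invariant inside $V$, and by the description of $\Psi_G$ in terms of excursion operators (\cite[Chp.~VIII]{fargues2021geometrization}), the displayed Hecke identity forces the corresponding endomorphisms of $\operatorname{Eis}_P(-)$ to agree. Since $\mathcal{O}(Z^1(W_F,\widehat{G})/\widehat{G})$ is generated by excursion functions (the key density statement of Fargues--Scholze), commutativity of the square above follows.

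The main obstacle I anticipate is establishing the Hecke/Eisenstein compatibility cleanly in the Fargues--Scholze setting: the naive base change along $\Bun_P \to \Bun_G$ is delicate because $\Bun_P$ is not cohomologically proper over $\Bun_G$, so the identity $T_V^G \circ \operatorname{Eis}_P \cong \operatorname{Eis}_P \circ T^M_{V|_{\widehat{M}}}$ requires a careful analysis of the strata at infinity and a vanishing/cleanness input (analogous to Braverman--Gaitsgory's work on geometric Eisenstein series). Once that technical heart is in place, the rest is formal manipulation of the excursion formalism and the definition of $\varphi^{\FS}$.
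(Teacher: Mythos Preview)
The paper does not give its own proof of this lemma: it is stated with a direct citation to \cite[Corollary IX.7.3]{fargues2021geometrization} and no argument is supplied. So there is nothing in the paper to compare your proposal against beyond the citation itself.

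That said, your outline is essentially the shape of the argument in Fargues--Scholze \S IX.7: they prove the commutation of the constant term functor $\mathrm{CT}_B$ with Hecke operators (their Corollary IX.7.2) via hyperbolic localization on the stratified $\Bun_P$, and from this deduce the commutativity of the square relating $\Psi_G$ and $\Psi_M$ through the excursion formalism, which is exactly Corollary IX.7.3. Two small remarks: in Fargues--Scholze the clean statement is for the constant term side rather than Eisenstein (your adjoint formulation is equivalent but the proof is written for $\mathrm{CT}$), and the ``restriction'' $V|_{\widehat{M}}$ must be taken along $\widehat{M}\rtimes W_F \hookrightarrow \widehat{G}\rtimes W_F$ so that the $W_F$-equivariance is retained. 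The obstacle you flag (failure of properness of $\Bun_P\to\Bun_G$ and the need for a hyperbolic-localization input) is precisely what Fargues--Scholze handle in \S IX.7, so your anticipation is accurate.
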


	Moreover, Fargues--Scholze constructed in \cite[Chp.~X]{fargues2021geometrization} an action of the perfect complexes  $\Perf(Z^1(W_F, \widehat{G})/\widehat{G})$ on the compact objects $\operatorname{D}(\Bun_G, \Ql)^{\omega}$, called the spectral action. For $B \in \Perf(Z^1(W_F, \widehat{G})/\widehat{G})$ and $A \in \operatorname{D}(\Bun_G, \Ql)^{\omega}$, we will denote by $B*A \in \operatorname{D}(\Bun_G, \Ql)^{\omega}$ the spectral action of $B$ on $A$. 
	
	The construction of the spectral action is by assembling the Hecke operators, thus we have the following lemma.
	
	\begin{lemma}[{\cite[Corollary X.1.3]{fargues2021geometrization}}]\label{Lemma_Spectral_Hecke}
		The spectral action is compatible with the Hecke action. 
	\end{lemma}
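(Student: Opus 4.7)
The plan is to invoke the construction of the spectral action in Chapter X of \cite{fargues2021geometrization} and observe that the asserted compatibility is built into its definition; accordingly, the statement appears as Corollary X.1.3 of \emph{loc.~cit.} Here I describe the structure of the argument.

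The starting point is the monoidal functor
\[
\Rep_{\Ql}(\widehat{G}) \longrightarrow \End(\DBunGomega)^{BW_F}, \qquad V \longmapsto T_V,
\]
built from the Hecke operators via the geometric Satake equivalence (\cite[Chapter VI]{fargues2021geometrization}). First I would recall this assignment together with its monoidal and $W_F$-equivariant structure. The next step is to interpret this datum, via Tannakian reconstruction, as a monoidal action of $\Perf([*/\widehat{G}])$ (carrying a $W_F$-action) on $\DBunGomega$, and then to extend it to the larger monoidal category $\Perf(\ZoneG)$ using the universal property of $\ZoneG$ as the moduli of continuous $1$-cocycles $W_F \to \widehat{G}$. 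This extension is the spectral action. Restricting it along the pullback $\Rep_{\Ql}(\widehat{G}) \to \Perf(\ZoneG)$ induced by the natural map $\ZoneG \to [*/\widehat{G}]$ then recovers $T_V$ by construction, which is precisely the compatibility asserted.

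The main obstacle is the extension step: one must verify that the monoidal functor from algebraic representations of $\widehat{G}$ extends essentially uniquely to an action of the full category $\Perf(\ZoneG)$. This is carried out in \cite[Chapters VIII and X]{fargues2021geometrization} using the excursion formalism and a careful analysis of the geometry of the stack of $L$-parameters (in particular, a description of its structure sheaf in terms of excursion operators). Once this extension is available, compatibility with the Hecke operators $T_V$ is tautological from the construction.
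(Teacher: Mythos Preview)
Your proposal is correct and aligns with the paper's treatment: the paper does not give a standalone proof but simply remarks that the spectral action is constructed by assembling the Hecke operators and cites \cite[Corollary X.1.3]{fargues2021geometrization}. Your expanded explanation of the Tannakian extension from $\Rep_{\Ql}(\widehat{G})$ to $\Perf(\ZoneG)$ is a faithful unpacking of that same ``by construction'' argument.
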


	\begin{lemma}\label{Lemma_spectral_ULA}
		The spectral action preserves $\ULA$ objects and compact objects. 
	\end{lemma}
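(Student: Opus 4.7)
The preservation of compact objects is essentially tautological: by construction (\cite[Chp.~X]{fargues2021geometrization}), the spectral action is defined as an action of $\Perf(Z^1(W_F,\widehat{G})/\widehat{G})$ on $\operatorname{D}(\Bun_G,\Ql)^{\omega}$, so for any $B \in \Perf(Z^1(W_F,\widehat{G})/\widehat{G})$ and $A \in \operatorname{D}(\Bun_G,\Ql)^{\omega}$, the object $B*A$ is compact by definition.

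For the ULA preservation, the strategy is to reduce to the Hecke operators via Lemma \ref{Lemma_Spectral_Hecke} and then invoke Lemma \ref{Lem_Hck_ULA}. First I would recall that under the map $Z^1(W_F,\widehat{G})/\widehat{G} \to [*/\widehat{G}]$, the pullback of an algebraic representation $V$ of $\widehat{G}$, regarded as a vector bundle on $[*/\widehat{G}]$, gives a perfect complex whose spectral action agrees with the Hecke operator $T_V$ (after forgetting the $W_F$-equivariance); this is precisely the compatibility asserted by Lemma \ref{Lemma_Spectral_Hecke}. Next, I would argue that the full subcategory of $\Perf(Z^1(W_F,\widehat{G})/\widehat{G})$ consisting of those $B$ for which $B * (-)$ preserves ULA objects is a thick tensor-ideal: it is closed under shifts, cones, and direct summands (because ULA objects form a thick subcategory of $\operatorname{D}(\Bun_G,\Ql)$), and it is closed under the tensor action of $\Perf(Z^1(W_F,\widehat{G})/\widehat{G})$ on itself.

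Then I would show that this subcategory contains a generating set, namely the pullbacks of representations of $\widehat{G}$ tensored with perfect complexes pulled back from the base of $Z^1(W_F,\widehat{G})/\widehat{G}$. For the pullbacks of representations, ULA-preservation is exactly Lemma \ref{Lem_Hck_ULA}. For perfect complexes pulled back from the base, the action is by a shift/twist that manifestly preserves ULA. Since such objects generate $\Perf(Z^1(W_F,\widehat{G})/\widehat{G})$ as a thick tensor-ideal (by the construction of the spectral action, which is built precisely out of these Hecke data via the excursion formalism of \cite[Chp.~VIII, X]{fargues2021geometrization}), ULA-preservation extends to all of $\Perf$.

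The main obstacle is the last step: making precise the generation statement and the closure under the tensor action in a way that keeps track of the $W_F$-equivariance that appears in the Hecke operators but not in the spectral action. Concretely, one must verify that the perfect complexes coming from representations of $\widehat{G}$ generate $\Perf(Z^1(W_F,\widehat{G})/\widehat{G})$ as a thick subcategory, which relies on the fact that $Z^1(W_F,\widehat{G})/\widehat{G}$ is a nice enough stack (e.g.\ has enough global perfect generators pulled back from $[*/\widehat{G}]$ together with functions on the space of parameters). This is essentially contained in the results of \cite[Chp.~VIII]{fargues2021geometrization}, so the proof ultimately amounts to citing the appropriate generation and thickness statements.
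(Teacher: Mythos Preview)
Your proposal is correct and follows the same route as the paper, whose entire proof reads ``Follows from Lemma~\ref{Lem_Hck_ULA} by construction of the spectral action''; you have simply unpacked what ``by construction'' entails. The generation obstacle you flag in the last paragraph is not a genuine gap: the very construction of the spectral action in \cite[Theorem~X.0.1, Corollary~X.0.3]{fargues2021geometrization} proceeds by showing that a compatible family of Hecke functors $T_V$ (for $V \in \Rep(\widehat{G}^I)$) assembles into a monoidal action of $\Perf(\ZoneG)$, so the pullbacks of such $V$ already generate $\Perf(\ZoneG)$ as an idempotent-complete stable $\infty$-category, and your thick-subcategory argument goes through.
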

	
	\begin{proof}
		Follows from Lemma \ref{Lem_Hck_ULA} by construction of the spectral action.
	\end{proof}
	
	The spectral action can be made explicit in the case of elliptic $L$-parameters (\cite[Section X.2]{fargues2021geometrization}).
	
	\begin{definition}[{\cite[Definition X.2.1]{fargues2021geometrization}}]\label{Def:elliptic}
		An $L$-parameter $\varphi: W_F \to \widehat{G}(\Ql)$ is \textbf{elliptic} if it is semisimple and the centralizer $S_{\varphi} \subseteq \widehat{G}$ has the property that $S_{\varphi}/Z(\widehat{G})^{\Gamma}$ is finite.
	\end{definition}

	Let $\varphi$ be an elliptic $L$-parameter, which induces a map
	$$i_{\varphi}: */S_{\varphi} \to Z^1(W_F, \widehat{G})/\widehat{G}.$$ According to \cite[Section X.2]{fargues2021geometrization}, the unramified twists of $\varphi$ defines a connected component 
	$C_{\varphi} \subseteq Z^1(W_F, \widehat{G})/\widehat{G}.$ The connected component $C_{\varphi}$ corresponds to an idempotent element of $\mathcal{O}(Z^1(W_F, \widehat{G})/\widehat{G})$, which gives rise to an idempotent element of  $\mathcal{Z}^{\operatorname{geom}}(G(F), \Ql)$ under the ring homomorphism
	$$\Psi: \mathcal{O}(Z^1(W_F, \widehat{G})/\widehat{G}) \to \mathcal{Z}^{\operatorname{geom}}(G, \Ql).$$ Thus, there is a corresponding direct summand
	$$\operatorname{D}^{C_{\varphi}}(\Bun_G, \Ql)^{\omega} \subseteq \operatorname{D}(\Bun_G, \Ql)^{\omega},$$
	consisting of the objects whose irreducible subquotients have Fargues--Scholze $L$-parameter an unramified twist of $\varphi$. Since $C_{\varphi}$ is regular, pushforward along the inclusion $*/S_{\varphi} \to C_{\varphi}$ preserves perfect complex. Hence the spectral action induces an action of $\Perf(*/S_{\varphi})$ on $\operatorname{D}(\Bun_G, \Ql)^{\omega}$. Noting that the spectral action is compatible with the map $\Psi$ between categorical centers, it follows that the spectral action, hence the induced action of $\Perf(*/S_{\varphi})$, preserves $\operatorname{D}^{C_{\varphi}}(\Bun_G, \Ql)^{\omega}$.
	
	\begin{lemma}[{\cite[Section X.2]{fargues2021geometrization}}]\label{Lemma:support}Let $A \in \operatorname{D}^{C_{\varphi}}(\Bun_G, \Ql)^{\omega}$, then $A$ is supported on $\Bun_G^{\operatorname{ss}}$.
	\end{lemma}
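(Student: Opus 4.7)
The strategy is to show that for every non-basic $b \in B(G)$, the restriction $i_b^* A \in \operatorname{D}(G_b(F), \Ql)$ vanishes, from which the claim is immediate since $\Bun_G^{\operatorname{ss}} = \sqcup_{b \in B(G)_{\basic}} \Bun_G^b$. By Remark \ref{Remark:ULA_compact_finite}, compactness of $A$ ensures these restrictions are nonzero for only finitely many $b$ and each has finite-length cohomology as a $G_b(F)$-representation. Hence it suffices to rule out nonzero irreducible subquotients in $i_b^* A$ when $b$ is non-basic.

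For non-basic $b$, the Newton cocharacter $\nu_b$ is a nontrivial $\Gamma$-invariant element of $X_*(T)_{\mathbb{Q}}$ (up to conjugation), cutting out a proper $F$-parabolic $P_b \subseteq G$ with $F$-Levi $M_b$, and $G_b$ is naturally an inner form of $M_b$. The compatibility of the Fargues--Scholze construction with parabolic induction (Lemma \ref{Lemma_parabolic}), combined with the description of the non-basic strata of $\Bun_G$ in \cite[Chapters III, IX]{fargues2021geometrization}, implies that every irreducible $G_b(F)$-representation appearing in $i_b^* A$ has Fargues--Scholze $L$-parameter (viewed as a $\widehat{G}$-conjugacy class) that factors through the Levi $\widehat{M_b} \subseteq \widehat{G}$.

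The ellipticity of $\varphi$ then completes the argument. Unramified twists preserve centralizers: for any central cocycle $\chi: W_F \to Z(\widehat{G})^\Gamma$, one has $S_{\chi \cdot \varphi} = S_\varphi$, so unramified twists of $\varphi$ remain elliptic. On the other hand, if an $L$-parameter $\varphi'$ factors through $\widehat{M}$ for a proper $F$-Levi $M$, then $Z(\widehat{M})^\Gamma \subseteq S_{\varphi'}$; since $Z(\widehat{M})/Z(\widehat{G})$ is a positive-dimensional $\Gamma$-torus whose $\Gamma$-fixed scheme is positive-dimensional (because $M$ is a proper $F$-Levi, so its central torus modulo $Z(G)$ has a nontrivial $F$-split quotient), the subgroup $Z(\widehat{M})^\Gamma / Z(\widehat{G})^\Gamma$ is infinite, contradicting the ellipticity of $\varphi'$. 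Hence no irreducible subquotient of $i_b^* A$ for non-basic $b$ can lie in $C_\varphi$, forcing $i_b^* A = 0$.

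The hard part will be the second paragraph: rigorously justifying why Fargues--Scholze $L$-parameters on non-basic strata factor through proper Levi subgroups. This is not a formal consequence of Lemma \ref{Lemma_parabolic} alone; it requires unwinding the interaction of the spectral action with the semi-orthogonal decomposition by Newton strata, specifically that the Hecke action on $\operatorname{D}(\Bun_G^b)$ for non-basic $b$ factors through a constant-term-type functor landing in the basic locus of $\Bun_{M_b}$, so that the spectral support lies in the image of $Z^1(W_F, \widehat{M_b})/\widehat{M_b} \to Z^1(W_F, \widehat{G})/\widehat{G}$.
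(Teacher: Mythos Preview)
Your approach is essentially the same as the paper's: the paper's proof is the one-liner ``This follows from $\varphi$ being elliptic and Lemma~\ref{Lemma_parabolic},'' and you have correctly unpacked what that means. Your concern in the final paragraph is well-placed but slightly overcautious: the content of \cite[Corollary IX.7.3]{fargues2021geometrization} (which is what Lemma~\ref{Lemma_parabolic} cites) is precisely that the map $\Psi$ to the geometric Bernstein center, restricted to the stratum $\Bun_G^b$, factors through the excursion algebra for $M_b$, so the Fargues--Scholze $L$-parameter of any irreducible $G_b(F)$-subquotient does factor through $\widehat{M_b} \hookrightarrow \widehat{G}$; no additional unwinding of the spectral action is needed beyond that reference.
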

	
	\begin{proof}
		This follows from $\varphi$ being elliptic and Lemma \ref{Lemma_parabolic}.
	\end{proof}
	
	\begin{lemma}\label{Lemma:*!}
		Let $A \in \operatorname{D}^{C_{\varphi}}(\Bun_G, \Ql)^{\omega}$, then $i_{1, *}A \cong i_{1, !}A.$
	\end{lemma}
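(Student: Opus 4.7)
The plan is to leverage Lemma \ref{Lemma:support} in the strengthened form that $A$ is \emph{cleanly} supported on the semistable locus — i.e., both $\iota_Z^*A = 0$ and $\iota_Z^!A = 0$, where $\iota_Z\colon Z := \Bun_G\setminus\Bun_G^{\operatorname{ss}}\hookrightarrow \Bun_G$ is the closed complement — and then to exploit the clopen decomposition $\Bun_G^{\operatorname{ss}} = \bigsqcup_{b\in B(G)_{\operatorname{basic}}}\Bun_G^b$ to split both $i_{1,!}i_1^*A$ and $i_{1,*}i_1^*A$ as direct summands of one and the same object, namely $A$ itself.

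First I would establish the strong-support statement. The vanishing $\iota_Z^*A = 0$ is exactly Lemma \ref{Lemma:support}. The vanishing $\iota_Z^!A = 0$ is obtained by an essentially identical argument: each non-basic stratum $\Bun_G^b$ is, by Harder--Narasimhan theory, controlled by a proper parabolic subgroup $P_b\subseteq G$ with Levi $M_b\subsetneq G$, so the representations of $G_b(F)$ arising in $i_b^!A$ (via the constant-term/nearby-cycles formalism on $\Bun_G$) are parabolically induced from $M_b(F)$; by Lemma \ref{Lemma_parabolic} their Fargues--Scholze parameters factor through $\widehat{M_b}\subsetneq \widehat{G}$ and so cannot be $\widehat{G}$-conjugate to the elliptic parameter $\varphi$, contradicting $A\in\operatorname{D}^{C_\varphi}$ unless $i_b^!A = 0$. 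Alternatively, one may apply Lemma \ref{Lemma:support} to the Verdier dual $\mathbb{D}A$ (which still lies in an elliptic block) via $\iota_Z^!A\cong\mathbb{D}(\iota_Z^*\mathbb{D}A)$, using Lemma \ref{Lemma_spectral_ULA} and Remark \ref{Remark:ULA_compact_finite} to ensure that $A$ is finite and hence that Verdier duality is well-behaved.

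Combined with the open immersion $j\colon \Bun_G^{\operatorname{ss}}\hookrightarrow \Bun_G$, the previous step gives $A\cong j_!j^*A\cong j_*j^*A$. Since the basic strata are clopen inside $\Bun_G^{\operatorname{ss}}$, each inclusion $i_b^{\operatorname{ss}}\colon \Bun_G^b\hookrightarrow \Bun_G^{\operatorname{ss}}$ is a clopen immersion, so $(i_b^{\operatorname{ss}})_!\cong (i_b^{\operatorname{ss}})_*$ and the restriction $j^*A$ canonically decomposes as $\bigoplus_{b\in B(G)_{\operatorname{basic}}}(i_b^{\operatorname{ss}})_!i_b^*A$. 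Applying $j_!$ (resp.\ $j_*$) to this decomposition and using $A\cong j_!j^*A\cong j_*j^*A$ yields $A \cong \bigoplus_{b} i_{b,!}\,i_b^*A \cong \bigoplus_{b} i_{b,*}\,i_b^*A$. Comparing the $b = 1$ direct summands gives $i_{1,!}i_1^*A\cong i_{1,*}i_1^*A$; under the convention identifying $A$ with $i_1^*A$ introduced just before the lemma, this is exactly $i_{1,!}A\cong i_{1,*}A$.

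The principal obstacle is the $\iota_Z^!$-vanishing: upgrading Lemma \ref{Lemma:support} from its stated $*$-version to a clean-support statement. Either route above (direct tracking of constituents in $i_b^!A$ for non-basic $b$, or Verdier duality) relies on the compatibility of the Fargues--Scholze parameter assignment with parabolic induction (Lemma \ref{Lemma_parabolic}) together with the precise interaction of the six-functor formalism on $\Bun_G$ with the Harder--Narasimhan stratification (\cite[Theorem III.0.2]{fargues2021geometrization}). Once this ingredient is secured, the clopen decomposition of $\Bun_G^{\operatorname{ss}}$ closes the argument formally.
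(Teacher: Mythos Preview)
Your approach is correct but takes a different route from the paper's. You argue by establishing \emph{clean} support of $A$ on $\Bun_G^{\operatorname{ss}}$, i.e., both $\iota_Z^*A = 0$ and $\iota_Z^!A = 0$, and then decompose along the clopen basic strata. The $\iota_Z^!$-vanishing step costs you extra work: either a $!$-analogue of Lemma~\ref{Lemma:support} (your description of $i_b^!A$ as ``parabolically induced from $M_b(F)$'' is not literally accurate and would need to be replaced by the actual argument from \cite[Section~X.2]{fargues2021geometrization}), or Verdier duality, where your appeal to Lemma~\ref{Lemma_spectral_ULA} and Remark~\ref{Remark:ULA_compact_finite} does not by itself show that a compact $A$ in the elliptic block is ULA---one needs the further observation that such $A$ is built from supercuspidals.

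The paper avoids the $\iota_Z^!$-step entirely. It observes, via the commutative triangle
\[
\begin{tikzcd}
\mathcal{O}(\ZoneG) \arrow[r,"\Psi"] \arrow[dr,"\Psi_G"'] & \mathcal{Z}^{\operatorname{geom}}(G,\Ql) \arrow[d] \\
& \mathcal{Z}(G(F),\Ql)
\end{tikzcd}
\]
that the action of $\mathcal{O}(\ZoneG)$ on $i_{1,*}A$ factors through the classical Bernstein center of $G(F)$; since the underlying $G(F)$-representation $A$ already lies in the $C_\varphi$-summand, so does $i_{1,*}A$. One then applies Lemma~\ref{Lemma:support} directly to $i_{1,*}A$ (not to $A$) to conclude that $i_{1,*}A$ is supported on $\Bun_G^{\operatorname{ss}}$, hence on $\Bun_G^1$, and the canonical map $i_{1,!}A \to i_{1,*}A$ is an isomorphism. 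Your argument yields the more symmetric statement that the whole elliptic block is cleanly supported on the semistable locus; the paper's buys brevity by using only the $*$-version of Lemma~\ref{Lemma:support} together with the Bernstein-center compatibility.
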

	
	\begin{proof}
		 We note that $i_{1, *}A \in \operatorname{D}^{C_{\varphi}}(\Bun_G, \Ql)^{\omega}$ because of the following commutative diagram.
		\[\begin{tikzcd}
			{\mathcal{O}(Z^1(W_F, \widehat{G})/\widehat{G})} & {\mathcal{Z}^{\operatorname{geom}}(G, \Ql)} \\
			& {\mathcal{Z}(G(F), \Ql)}
			\arrow[from=1-1, to=1-2]
			\arrow[from=1-1, to=2-2]
			\arrow[from=1-2, to=2-2]
		\end{tikzcd}\]
		Hence the lemma follows from Lemma \ref{Lemma:support}. 
		
	\end{proof}

	Let $\varphi$ be an elliptic $L$-parameter. Let $\pi$ be an irreducible representation of $G(F)$ with Fargues--Scholze $L$-parameter $\varphi$.
	Define 
	\begin{equation}\label{Eq_def_pi}
		\mathcal{F}_0:=(i_{\varphi})_*\OSZ * (i_1)_!\pi  \qquad \pi_0:=i_1^*\mathcal{F}_0.
	\end{equation}

	Since $(i_{\varphi})_*\OSZ \in \Perf(Z^1(W_F, \widehat{G})/\widehat{G})$ and $(i_1)_!\pi$ is a finite object (see Remark \ref{Remark:ULA_compact_finite}) of $\operatorname{D}(\Bun_G, \Ql)$, the sheaf $\mathcal{F}_0$ is also a finite object of $\operatorname{D}(\Bun_G, \Ql)$ (Lemma \ref{Lem_Hck_ULA}). 
	Recall that the spectral action preserves $\operatorname{D}^{C_{\varphi}}(\Bun_G, \Ql)^{\omega}$,  we have $\mathcal{F}_0 \in \operatorname{D}^{C_{\varphi}}(\Bun_G, \Ql)^{\omega}$ since $(i_1)_!\pi \in \operatorname{D}^{C_{\varphi}}(\Bun_G, \Ql)^{\omega}$. Hence $\mathcal{F}_0$ is supported on the basic strata (Lemma \ref{Lemma:support}).
	Therefore, 
	$$\mathcal{F}_0 \in \operatorname{D}(\Bun_G^{\operatorname{ss}}, \Ql) \cong \bigoplus_{b \in B(G)_{\basic}}\operatorname{D}(G_b(F), \Ql).$$

	\begin{proposition}\label{Proposition_elliptic}
		$\mathcal{F}_0$ is a finite direct sum of irreducible representations of $G_b(F)$ for $b \in B(G)_{\basic}$ up to degree shifts. Moreover, all the irreducible representations occurring in $\mathcal{F}_0$ are supercuspidal with Fargues-Scholze $L$-parameter $\varphi$ and have the same central character.
	\end{proposition}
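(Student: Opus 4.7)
The plan is to combine the established finiteness and support properties of $\mathcal{F}_0$ with the structural consequences of the ellipticity of $\varphi$ and the rigidity of supercuspidal representations. First, I would use that $\mathcal{F}_0 \in \operatorname{D}^{C_{\varphi}}(\Bun_G, \Ql)^{\omega}$ is supported on the semistable locus $\Bun_G^{\operatorname{ss}} = \sqcup_{b \in B(G)_{\basic}} \Bun_G^{b}$ (Lemma \ref{Lemma:support}), so that the equivalence $\operatorname{D}(\Bun_G^b, \Ql) \simeq \operatorname{D}(G_b(F), \Ql)$ identifies $\mathcal{F}_0$ with a bounded complex in $\bigoplus_{b} \operatorname{D}(G_b(F), \Ql)$. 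Finiteness (Remark \ref{Remark:ULA_compact_finite}) forces only finitely many $b$ to contribute, and each cohomology $H^{i}(\mathcal{F}_0)$ to be a finite-length smooth $G_b(F)$-representation on each contributing stratum.

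Next, I would pin down the Fargues--Scholze parameter of every irreducible subquotient as $\varphi$ itself. A priori, membership in $\operatorname{D}^{C_{\varphi}}$ only says the parameter lies in the connected component $C_{\varphi}$; but for elliptic $\varphi$ this connected component is the single $\widehat{G}$-orbit $[*/S_{\varphi}]$, because $Z(\widehat{G})^{\Gamma}$ is always contained in $S_{\varphi}$ and therefore unramified twisting is absorbed by conjugation. If an irreducible constituent $\pi'$ of $G_b(F)$ were not supercuspidal, Lemma \ref{Lemma_parabolic} would factor $\varphi = \varphi_{\pi'}^{\FS}$ through the dual of a proper Levi, contradicting ellipticity; hence every constituent is supercuspidal. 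Since supercuspidal smooth $\Ql$-representations of $G_b(F)$ are both projective and injective in their Bernstein blocks, each finite-length $H^{i}(\mathcal{F}_0)$ is a finite direct sum of irreducible supercuspidals, and the vanishing of higher Ext groups between such cohomologies makes the bounded complex $\mathcal{F}_0$ formal, i.e.\ isomorphic to $\bigoplus_{i} H^{i}(\mathcal{F}_0)[-i]$.

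Finally, for basic $b$ the group $G_b$ is an extended pure inner form of $G$ and shares its center with $G$; by Lemma \ref{Lemma_central_char}, the central character of each irreducible constituent is determined by its Fargues--Scholze parameter $\varphi$, so all of them carry a common central character. The main obstacle I anticipate is the clean identification in the second step — verifying that the Fargues--Scholze parameter of each constituent is \emph{exactly} $\varphi$ rather than a genuine unramified twist. This is where ellipticity enters essentially, through the containment $Z(\widehat{G})^{\Gamma} \subseteq S_{\varphi}$ that collapses $C_{\varphi}$ to the single orbit $[*/S_{\varphi}]$ and makes the spectral-action projection via $(i_{\varphi})_{*} \OSZ$ single out $\varphi$ unambiguously.
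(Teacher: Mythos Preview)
Your overall strategy matches the paper's, but two steps contain genuine errors.

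First, your claim that for elliptic $\varphi$ the connected component $C_\varphi$ collapses to the single orbit $[*/S_\varphi]$ is false in general: unramified central twists $\varphi\cdot\chi$ for nontrivial unramified $\chi\colon W_F\to Z(\widehat{G})^\Gamma$ are typically \emph{not} $\widehat{G}$-conjugate to $\varphi$ (already for $G=\GL_1$ they differ). The containment $Z(\widehat{G})^\Gamma\subseteq S_\varphi$ does not make twisting ``absorbed by conjugation''; the statement $i_\varphi$ is open and closed appears in the introduction only under Assumption~\ref{Assumption_intro}, where $Z(\widehat{G})^\Gamma$ is trivial. The correct reason every constituent has Fargues--Scholze parameter exactly $\varphi$ --- which you do gesture at in your final sentence --- is that $(i_\varphi)_*\OSZ$ is set-theoretically supported at the point $\varphi$ of the coarse moduli space, and the spectral action is $\mathcal{O}(\ZoneG)$-linear via $\Psi$, so $\mathcal{F}_0$ is annihilated by the maximal ideal at $\varphi$.

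Second, the assertion that supercuspidals are projective and injective in their Bernstein blocks is false when the center of $G_b$ is non-compact: already for $G_b=\GL_1$ the trivial representation has a nontrivial self-extension in its (unramified) block. What is true, and what the paper invokes via \cite[Theorem~5.4.1]{casselman1995introduction}, is that supercuspidals with a \emph{fixed} central character admit no nontrivial extensions among themselves. Hence you must reverse the order of your last two steps: first use the common Fargues--Scholze parameter $\varphi$ (justified as above) together with Lemma~\ref{Lemma_central_char} to pin down a single central character, and only then conclude complete reducibility and formality of $\mathcal{F}_0$.
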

	
	\begin{proof}
		By the compatibility of $\Psi_G$ with parabolic induction, any irreducible subquotient of $\mathcal{F}_0$ is supercuspidal. By \cite[Theorem 5.4.1]{casselman1995introduction}, those supercuspidal irreducible representations have no non-trivial extensions between each other, as all of them have the same central character by Lemma \ref{Lemma_central_char}.
		Therefore, $\mathcal{F}_0$ is completely reducible.
		Hence, we can write
		\begin{equation}\label{Eq:decomp}
			\mathcal{F}_0 \cong \bigoplus_{\pi'}\pi'[d_{\pi'}],
		\end{equation}
		where $\pi'$ runs over irreducible subquotients of $\mathcal{F}_0$ (here different $\pi'$ might be isomorphic as representation) and $d_{\pi'} \in \mathbb{Z}$ is the degree shift. Since $\mathcal{F}_0$ is a finite object, there are only finitely many summands occurring in $\mathcal{F}_0$.
	\end{proof}

	\begin{remark}
		\begin{enumerate}
			\item For now, as in Equation (\ref{Eq:decomp}), we only know that $\mathcal{F}_0$ is a direct sum of irreducible representations \textbf{up to degree shifts}. We expect that these are honest representations, i.e., $d_{\pi'}=0$.
			\item We expect that the set of $\pi'$'s occuring in Equation \eqref{Eq:decomp} is exactly the Fargues--Scholze $L$-packet  of $\varphi$ (see \eqref{Eq_FS_L-pacekt}). However, this is in general unknown so far.
		\end{enumerate}
	\end{remark}

	\subsection{Harish-Chandra characters}

	In this subsection, we review the theory of Harish-Chandra characters. The main reference is \cite{fintzensupercuspidal},  \cite{kalethalocal}, and \cite{taibijacquet}.

	Let $(V, \pi)$ be an admissible representation of $G(F)$. Any $f \in C_c^{\infty}(G(F))$ gives an endomorphism of $V$
	$$\pi(f): v \mapsto \int_{G(F)} f(g)\pi(g)v \,dg,$$
	and its image is contained in $V^K$ for some compact open subgroup $K$ of $G(F)$. In particular, $\pi(f)$ has finite range and we may consider 
	$$\Theta_{\pi}(f):=\tr\pi(f).$$
	A standard result in representation theory of finite-dimensional associative algebras implies that the Harish-Chandra characters $\Theta_\pi$ of the irreducible smooth representations of $G(F)$ (up to isomorphism) are linearly independent. In particular a smooth representation of finite length is characterized up to semisimplification by its Harish-Chandra character.
	
	Denote by $G_{\rs}$ the regular semisimple locus in $G$, an open dense subscheme.
	
	\begin{theorem}[{\cite[Theorem 16.3]{harishchandra1999admissible}}]
		Assume that $F$ is a non-archimedean local field of characteristic zero. Let $(V, \pi)$ be an irreducible smooth representation of $G(F)$. Choose a Haar measure of $G(F)$. There exists a unique element in $L^1_{\operatorname{loc}}(G(F))$, also denoted $\Theta_\pi$, such that for any $f \in C_c^{\infty}(G(F))$ we have
		$$\tr\pi(f)=\int_{G(F)}\Theta_{\pi}(g)f(g) \,dg.$$
		Moreover, $\Theta_{\pi}$ is represented by a unique locally constant function on $G_{\rs}(F)$.
	\end{theorem}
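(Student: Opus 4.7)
The plan is to follow Harish-Chandra's original argument in \cite{harishchandra1999admissible}, which is a substantial piece of $p$-adic harmonic analysis; I will only sketch the overall structure. The fact that $\Theta_\pi$ exists as an invariant distribution on $G(F)$ is elementary: for $f\in C_c^\infty(G(F))$, choose a compact open subgroup $K\subseteq G(F)$ under which $f$ is biinvariant; then by admissibility $\pi(f)$ factors through the finite-dimensional space $V^K$, so $\tr\pi(f)$ is defined, and $f\mapsto\tr\pi(f)$ is linear and continuous in the natural topology on $C_c^\infty(G(F))$, with invariance under $G(F)$-conjugation coming from a change of variables.

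Next I would show $\Theta_\pi$ is represented by a locally constant function on $G_{\rs}(F)$. The Weyl integration formula rewrites $\tr\pi(f)$ as an integral over stable conjugacy classes of regular semisimple elements weighted by orbital integrals $O_\gamma(f)$. Combining Harish-Chandra's theorem that orbital integrals are locally constant on $G_{\rs}(F)$ with the submersion principle (the map $G(F)\times T(F)^{\rs}\to G_{\rs}(F)$, $(x,t)\mapsto xtx^{-1}$, is submersive with smooth fibers), one deduces that any conjugation-invariant distribution on $G_{\rs}(F)$ that has finite-dimensional image under each $\pi(f)$ is represented by a locally constant function on $G_{\rs}(F)$.

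The deepest step is to upgrade this to local integrability on all of $G(F)$. Proceed by semisimple descent around an arbitrary semisimple element $\sigma\in G(F)$: via the exponential map, analysis near $\sigma$ reduces to analysis on a neighborhood of $0$ in the Lie algebra $\mathfrak{g}_\sigma(F)$ of the centralizer $G_\sigma$. The core ingredient is Harish-Chandra's germ expansion
$$\Theta_\pi(\sigma\exp X)=\sum_{\mathcal{O}} c_{\mathcal{O}}(\pi,\sigma)\,\widehat{\mu}_{\mathcal{O}}(X),$$
valid for $X$ in a small neighborhood of $0$, where the sum is over nilpotent $G_\sigma(F)$-orbits $\mathcal{O}$ in $\mathfrak{g}_\sigma(F)$ and $\widehat{\mu}_{\mathcal{O}}$ is the Fourier transform of the corresponding nilpotent orbital integral. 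Each $\widehat{\mu}_{\mathcal{O}}$ lies in $L^1_{\operatorname{loc}}(\mathfrak{g}_\sigma(F))$ and is locally constant on the regular semisimple locus; pulling back via $\exp$ and patching across all semisimple $\sigma$ yields the $L^1_{\operatorname{loc}}$-representability of $\Theta_\pi$ on $G(F)$, with uniqueness automatic from the density of $G_{\rs}(F)$.

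The main obstacle is precisely this local integrability theorem for $\widehat{\mu}_{\mathcal{O}}$. Its proof uses characteristic zero essentially: one needs the exponential map to identify neighborhoods of semisimple elements with their Lie-algebra analogues, additive Fourier analysis on $\mathfrak{g}_\sigma(F)$, and Jacobson--Morozov $\mathfrak{sl}_2$-triples to reduce the analysis of each nilpotent orbit $\mathcal{O}$ to a controllable Shalika-germ computation. These inputs, rather than anything specific to Fargues--Scholze theory, are what force the characteristic-zero hypothesis in the statement.
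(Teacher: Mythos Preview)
The paper does not provide a proof of this theorem: it is stated as a citation of \cite[Theorem 16.3]{harishchandra1999admissible} and left at that, with only a subsequent remark pointing to analogous results in positive characteristic. There is therefore no ``paper's own proof'' against which to compare your attempt.

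That said, your sketch is a faithful outline of Harish-Chandra's original argument and correctly identifies the three layers: (i) existence of $\Theta_\pi$ as an invariant distribution via admissibility, (ii) local constancy on $G_{\rs}(F)$ via the submersion principle, and (iii) local integrability via semisimple descent and the local character expansion in terms of Fourier transforms of nilpotent orbital integrals. You are also right that step (iii) is where the characteristic-zero hypothesis enters. One minor quibble: in step (ii) the Weyl integration formula and orbital-integral considerations are not really what drive the local constancy; the submersion principle alone, applied to the conjugation map from $G(F)/T(F)\times T(F)^{\rs}$ to $G_{\rs}(F)$, already shows that any invariant distribution on $G_{\rs}(F)$ is locally constant there. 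But as a high-level summary of a result the paper merely quotes, your proposal is appropriate.
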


			\begin{remark}
				A similar result holds for $F$ of positive characteristic, see \cite[Section 5]{fintzensupercuspidal}.
			\end{remark}
			
			\begin{definition}\label{Def_HC}
				We define the \textbf{Harish-Chandra character} of a derived representation $A \in \operatorname{D}(G(F), \Ql)$ of finite length as the alternating sum of the Harish-Chandra characters of its cohomologies.
			\end{definition}

			\subsection{Recollections on Hansen--Kaletha--Weinstein}
			
			In this subsection, we review \cite{hansen2022kottwitz}.
			
			For $b \in G(\breve{F})$, let $\pi_b \in \Rep_{\Ql}(G_b(F))$ be a finite length and admissible representation. For $\mu: \mathbb{G}_m \to G$ a conjugacy class of cocharacters defined over $\overline{F}$, let $R\Gamma(G, b, \mu)[\pi_b]$ be the $\pi_b$-isotypic component of the cohomology of the tower of shtuka spaces $\operatorname{Sht}_{G, b, \mu}$ (\cite[Section 1]{hansen2022kottwitz}). $R\Gamma(G, b, \mu)[\pi_b]$ is a bounded complex whose cohomologies are finite-length admissible representations of $G(F)$. Therefore, we can form the finite-length virtual representation 
			$$\Mant_{G, b, \mu}(\pi_b)=\sum_i(-1)^iH^i(R\Gamma(G, b, \mu)[\pi_b]),$$ 
			which then has a Harish-Chandra character $\Theta_{\Mant_{G, b, \mu}(\pi_b)}$.
			
			The main result we will use from \cite{hansen2022kottwitz} is the following.
			
			\begin{theorem}[{\cite[Theorem 6.5.2]{hansen2022kottwitz}}]\label{Theorem_HKW}
				We have an equality
				\begin{equation}\label{Equation_HKW}
					\Theta_{\Mant_{G, b, \mu}(\pi_b)}(g)=(T_{b, \mu}^{G_b \to G}\Theta_{\pi_b})(g):=(-1)^d\sum_{(g, g', \lambda) \in \Rel_b}\dim V_{\mu}[\lambda]\Theta_{\pi_b}(g')
				\end{equation}
				for any $g \in G(F)_{\elliptic}$, where $d=\langle \mu, 2\rho_G\rangle \in \mathbb{Z}$ and $\Rel_b$ is certain subset of $G(F) \times G_b(F) \times X^*(\widehat{T})$ (\cite[Definition 3.2.4]{hansen2022kottwitz}).
			\end{theorem}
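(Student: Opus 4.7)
The plan is to identify the Mantovan cohomology $\Mant_{G,b,\mu}(\pi_b)$ with the result of applying a Hecke operator on $\Bun_G$ and then compute its Harish-Chandra character by a trace-formula argument. Concretely, the first step is to recognize, via the geometric interpretation of the shtuka tower $\operatorname{Sht}_{G,b,\mu}$ as a Hecke correspondence on $\Bun_G$, an isomorphism of the shape
\[
\Mant_{G,b,\mu}(\pi_b) \;\cong\; i_1^{*}\, T_{V_{\mu}}\!\bigl(i_{b,!}\pi_b\bigr)[d]
\]
(up to conventions on duals, shifts, and Tate twists), where $i_b:[*/G_b(F)]\hookrightarrow \Bun_G^b$ and $i_1:[*/G(F)]\hookrightarrow \Bun_G^{1}$ are the inclusions of the relevant basic strata. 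This reduces the theorem to a Harish--Chandra character computation for the Hecke functor $T_{V_\mu}$ between derived categories of representations of the inner forms.

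Next I would evaluate $\Theta_{i_1^{*}T_{V_\mu}(i_{b,!}\pi_b)}(g)$ for $g\in G(F)_{\elliptic}$ using a Lefschetz--Verdier / Fujiwara-type trace formula on the local Hecke correspondence underlying $T_{V_\mu}$. The trace of (a test function concentrated near) $g$ acting on the stalk of $T_{V_\mu}(i_{b,!}\pi_b)$ at the trivial $G$-bundle unwinds into a sum of local terms over the fixed points of the Hecke correspondence compatible with the action of $g$. Ellipticity of $g$ is crucial here: it forces $\Cent(g,G)$ to be anisotropic modulo center, so the fixed-point locus is compact modulo inertia and the global trace indeed decomposes as a finite sum. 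The fixed points are naturally indexed by triples $(g,g',\lambda)$: the element $g'\in G_b(F)$ records the $G_b(F)$-conjugacy class to which the fixed point projects under the second leg of the correspondence, and $\lambda\in X^{*}(\widehat{T})$ encodes the relative position at the Hecke modification — this is the combinatorial set $\Rel_b$.

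The local term at each fixed point is then computed via geometric Satake: the sheaf underlying $T_{V_\mu}$ is the Satake sheaf attached to $V_\mu$ on the Beilinson--Drinfeld affine Grassmannian, and its stalk on the relative-position-$\lambda$ stratum has Euler characteristic equal to the weight multiplicity $\dim V_\mu[\lambda]$. Multiplying by the contribution $\Theta_{\pi_b}(g')$ coming from the trace of $g'$ on the stalk $i_b^{*}(i_{b,!}\pi_b)=\pi_b$, and picking up the global sign $(-1)^d$ from the shift $d=\langle\mu,2\rho_G\rangle$ equal to the relative dimension of the shtuka space, one recovers exactly the right-hand side of \eqref{Equation_HKW}.

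The main obstacle is making the Lefschetz--Verdier step rigorous in this non-proper, small $v$-stack setting: the Hecke correspondence on $\Bun_G$ is not proper, so one must either work on a $\ULA$ truncation or use the ellipticity of $g$ to show that only an essentially proper, finite piece contributes, and then verify that the Atiyah--Bott-style local terms agree with the combinatorial description in $\Rel_b$. Once the fixed-point analysis is carried out and matched with the definition of $\Rel_b$ in \cite[Definition 3.2.4]{hansen2022kottwitz}, the remaining inputs — geometric Satake for the weight multiplicities, compatibility of Hecke operators with parabolic induction for the reduction to the elliptic locus, and the identification of the sign $(-1)^d$ — are standard and assemble the formula.
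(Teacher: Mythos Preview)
Your outline is essentially correct and matches what the paper indicates: this theorem is not proved in the present paper but is quoted from \cite{hansen2022kottwitz}, and the paper's only comment on the proof is that, ``modulo out some technical steps, Theorem~\ref{Theorem_HKW} is a corollary of the relative Lefschetz--Verdier trace formula, a modern enhancement of the Lefschetz fixed point formula under the framework of the $2$-category of cohomological correspondences.'' Your sketch --- identifying $\Mant_{G,b,\mu}$ with a Hecke operator applied to $i_{b,!}\pi_b$ (this is \cite[Theorem~6.4.5]{hansen2022kottwitz}, also invoked in the paper), then running a Lefschetz--Verdier fixed-point computation on the Hecke correspondence, with local terms given by weight multiplicities via geometric Satake and the sign $(-1)^d$ from the shift --- is exactly an unpacking of that comment, and you have correctly flagged the genuine technical obstacle (making Lefschetz--Verdier work on non-proper small $v$-stacks, handled in \cite{hansen2022kottwitz} via the formalism of cohomological correspondences and the ellipticity hypothesis).
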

			
			Let us comment on the proof of Theorem \ref{Theorem_HKW}. Modulo out some technical steps, Theorem \ref{Theorem_HKW} is a corollary of the relative Lefschetz--Verdier trace formula, a modern enhancement of the Lefschetz fixed point formula under the framework of the $2$-category of cohomological correspondences.

			\section{Polynomial growth of $\chi(\character V_{\mu_m})$}
			Recall that $\mu_m=4m\rho_{G}$. The goal of this section is to show that the dimension of the highest weight representation $\dim V_{\mu_m}$ is a polynomial in $m \in \mathbb{Z}_{\geq 0}$ of degree $|\widehat{\Phi}^+|$ and that $\chi(\character V_{\mu_m})$ is bounded by a polynomial of degree less or equal to $|\widehat{\Phi}^+|-1$ for any nontrivial complex character $\chi$ of certain finite abelian group $H_g$ (assuming $|\widehat{\Phi}| \geq 1$).
			Throughout the section, the coefficients of representation is an algebraically closed field $C$ of characteristic $0$, for example, $\Ql$ or $\mathbb{C}$. 
			
			\subsection{Lemmas from Lie theory}
			
			Let us first define the character of an algebraic representation of $\widehat{G}$.
			
			\begin{definition}
				Let $V \in \Rep_{\widehat{G}}(C)$, the \textbf{character} of $V$ is 
				$$\character V := \sum_{\mu 
					\in X^*(\widehat{T})} \dim V[\mu]e^\mu \quad \in \quad \mathbb{Z}[X^*(\widehat{T})] \cong \mathbb{Z}[X_*(\Tuniv)],$$
				where $V[\mu]$ is the $\mu$-weight space of $V$ and $\dim V[\mu]$ is its weight multiplicity.
			\end{definition}
			
			For $m \in \mathbb{Z}_{\geq 0}$, let $\mu_m:=4m\rho_G \in \Lambda_{\widehat{\Phi}} = \mathbb{Z}\widehat{\Phi}$, where $\rho_G:=\frac{1}{2}\sum_{\alpha \in \widehat{\Phi}^+}\alpha$ is the half sum of positive roots.
			
			\begin{lemma}\label{Lemma_char}
				Let $V_{\mu_m}$ be the highest weight representation of $\widehat{G}$ with highest weight  $\mu_m$. We have
				
				$$\character V_{\mu_m}=\prod_{\alpha \in \widehat{\Phi}^+}(e^{2m\alpha}+e^{(2m-1)\alpha}+...+e^{-2m\alpha}).$$

			\end{lemma}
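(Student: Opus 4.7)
The plan is to apply the Weyl character formula to $V_{\mu_m}$ and then recognize both the numerator and denominator as products via (scaled versions of) the Weyl denominator formula. The only subtle point will be justifying a scaled version of the Weyl denominator formula, which is entirely formal.

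First, recall the Weyl character formula (valid in characteristic zero):
$$\character V_{\mu_m} \cdot \sum_{w \in W}(-1)^{l(w)} e^{w\rho_G} = \sum_{w \in W}(-1)^{l(w)} e^{w(\mu_m+\rho_G)}.$$
Since $\mu_m = 4m\rho_G$, we have $\mu_m + \rho_G = (4m+1)\rho_G$. I would then apply the standard Weyl denominator formula to rewrite
$$\sum_{w \in W}(-1)^{l(w)} e^{w\rho_G} = \prod_{\alpha \in \widehat{\Phi}^+}\bigl(e^{\alpha/2} - e^{-\alpha/2}\bigr).$$

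Next I would establish the scaled identity
$$\sum_{w \in W}(-1)^{l(w)} e^{w((4m+1)\rho_G)} = \prod_{\alpha \in \widehat{\Phi}^+}\bigl(e^{(4m+1)\alpha/2} - e^{-(4m+1)\alpha/2}\bigr).$$
This is obtained from the ordinary Weyl denominator formula by the formal substitution $e^{\beta} \mapsto e^{(4m+1)\beta}$ on the ambient weight space: both sides of the Weyl denominator formula are $\mathbb{Z}$-linear combinations of monomials $e^{\nu}$, and the substitution is compatible with the $W$-action since $w$ acts linearly, so $w\bigl((4m+1)\rho_G\bigr) = (4m+1)\,w(\rho_G)$. (Equivalently, one applies the ordinary Weyl denominator formula to the rescaled collection of roots $\{(4m+1)\alpha\}_{\alpha\in\widehat{\Phi}}$, which shares the same Weyl group $W$.) This is the only step with any content beyond bookkeeping, but it is essentially immediate.

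Dividing these two identities, I obtain
$$\character V_{\mu_m} = \prod_{\alpha \in \widehat{\Phi}^+} \frac{e^{(4m+1)\alpha/2} - e^{-(4m+1)\alpha/2}}{e^{\alpha/2} - e^{-\alpha/2}}.$$
Finally, each factor is a geometric series: multiplying numerator and denominator by $e^{-\alpha/2}$ and using the formula $\frac{x^{4m+1}-1}{x-1} = 1+x+\cdots+x^{4m}$ with $x = e^{\alpha}$, one gets
$$\frac{e^{(4m+1)\alpha/2} - e^{-(4m+1)\alpha/2}}{e^{\alpha/2} - e^{-\alpha/2}} = e^{-2m\alpha}\bigl(1 + e^{\alpha} + \cdots + e^{4m\alpha}\bigr) = e^{2m\alpha}+e^{(2m-1)\alpha}+\cdots+e^{-2m\alpha},$$
which yields the claimed formula. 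There is no real obstacle; the argument is a direct chain of three standard Lie-theoretic manipulations.
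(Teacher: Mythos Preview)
Your proof is correct and follows essentially the same route as the paper: apply the Weyl character formula with $\mu_m+\rho_G=(4m+1)\rho_G$, use the Weyl denominator identity together with its scaled version (the paper states this as Equation~\eqref{Eq:product} for general $c\in\mathbb{Z}_{\ge 1}$, citing \cite[Lemma 24.3]{fulton2004representation}), and then simplify each factor as a geometric series. Your justification of the scaled denominator identity and of the final geometric-series step is slightly more explicit than the paper's, but the argument is the same.
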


			\begin{proof}
				By \cite[Lemma 24.3]{fulton2004representation}, we have
				$$\sum_{w \in W}(-1)^{l(w)}e^{w\rho_G}=\prod_{\alpha \in \widehat{\Phi}^+}(e^{\alpha/2}-e^{-\alpha/2}).
				$$
				Similarly, for any $c \in \mathbb{Z}_{\geq 1}$, we have
				\begin{equation}\label{Eq:product}
					\sum_{w \in W}(-1)^{l(w)}e^{cw\rho_G}=\prod_{\alpha \in \widehat{\Phi}^+}(e^{c\alpha/2}-e^{-c\alpha/2}).
				\end{equation}
				
				Recall that we have the Weyl character formula
				$$\character(V_{\mu})=\frac{\sum_{w \in W}(-1)^{l(w)}e^{w(\rho_G+\mu)}}{\sum_{w \in W}(-1)^{l(w)}e^{w\rho_G}}=\frac{\sum_{w \in W}(-1)^{l(w)}e^{w(\rho_G+\mu)}}{\prod_{\alpha \in \widehat{\Phi}^+}(e^{\alpha/2}-e^{-\alpha/2})}.$$
				
				Let $\mu=\mu_m$, combining the Weyl character formula and Equation (\ref{Eq:product}), we have

				\begin{align*}    	
					& \character(V_{\mu_m})  \\
					=\;&  \frac{\sum_{w \in W}(-1)^{l(w)}e^{w(\rho_G+4m\rho_G)}}{\prod_{\alpha \in \widehat{\Phi}^+}(e^{\alpha/2}-e^{-\alpha/2})}\\
					=\;& \frac{\prod_{\alpha \in \widehat{\Phi}^+}(e^{(4m+1)\alpha/2}-e^{-(4m+1)\alpha/2})}{\prod_{\alpha \in \widehat{\Phi}^+}(e^{\alpha/2}-e^{-\alpha/2})}
					\\
					=\;& \prod_{\alpha \in \widehat{\Phi}^+}(e^{2m\alpha}+e^{(2m-1)\alpha}+...+e^{-2m\alpha}).
				\end{align*}
			\end{proof}

			Note that for a representation $V$ of $\widehat{G}$ with character
			$$\character V=\sum_{\mu \in X^*(\widehat{T})}\dim V[\mu]e^{\mu},$$
			its dual representation $V^{\vee}$ has character
			$$\character V^{\vee}=\sum_{\mu \in X^*(\widehat{T})}\dim V[\mu]e^{-\mu}$$
			(\cite[23.38]{fulton2004representation}).
			
			\begin{lemma}\label{Lemma_selfdual}
				$V_{\mu_m}$ is self-dual: $V_{\mu_m}^{\vee} \cong V_{\mu_m}$. 
			\end{lemma}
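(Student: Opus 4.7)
The plan is to deduce self-duality directly from the character formula just established in Lemma \ref{Lemma_char}. Recall that
$$\character V_{\mu_m} = \prod_{\alpha \in \widehat{\Phi}^+}\bigl(e^{2m\alpha}+e^{(2m-1)\alpha}+\cdots+e^{-2m\alpha}\bigr).$$
Each factor on the right-hand side is manifestly invariant under the involution $e^{\alpha} \mapsto e^{-\alpha}$ on $\mathbb{Z}[X^*(\widehat{T})]$, since the exponents in the $\alpha$-factor range symmetrically from $-2m$ to $2m$. Therefore the whole product is invariant under $e^{\mu} \mapsto e^{-\mu}$, which is exactly to say that $\character V_{\mu_m} = \character V_{\mu_m}^{\vee}$ by the formula for the character of the dual recalled just before the statement. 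Since in characteristic zero an algebraic representation of the reductive group $\widehat{G}$ is determined up to isomorphism by its character, this yields $V_{\mu_m}^{\vee} \cong V_{\mu_m}$.

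As a sanity check, one can see this more structurally: for any dominant $\mu$ one has $V_\mu^{\vee} \cong V_{-w_0\mu}$ where $w_0$ is the longest element of the Weyl group, and $w_0$ sends $\widehat{\Phi}^+$ to $-\widehat{\Phi}^+$, so $w_0\rho_G = -\rho_G$, giving $-w_0\mu_m = -w_0(4m\rho_G) = 4m\rho_G = \mu_m$. Either viewpoint suffices; I would present the character-theoretic argument since it is self-contained given the preceding lemma and requires no additional machinery. There is no real obstacle here — the lemma is essentially a corollary of Lemma \ref{Lemma_char}.
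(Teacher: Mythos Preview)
Your proof is correct and follows essentially the same approach as the paper: both deduce self-duality from the symmetry of the character formula in Lemma \ref{Lemma_char} under $e^{\mu} \mapsto e^{-\mu}$ together with the fact that the character determines the representation. Your additional remark via $V_\mu^\vee \cong V_{-w_0\mu}$ is a nice sanity check but not needed.
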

			
			\begin{proof}
				Note that a finite dimensional algebraic representation is uniquely determined by its character. The claim follows since the formula for $\character V_{\mu_m}$ in Lemma \ref{Lemma_char} is invariant under the map $\alpha \mapsto -\alpha, \; \forall \alpha \in \widehat{\Phi}$.
			\end{proof}

			\subsection{Lemmas about $H_g$}
			
			Let $g \in G(F)_{\elliptic}$ and $T_g:=\Cent(g, G)$ with a choice of Borel subgroup over $\overline{F}$ containing $(T_g)_{\overline{F}}$. Recall that by definition of $\pi_1(G)$, we have a $\Gamma$-equivariant short exact sequence (Lemma \ref{Lemma_Gamma} and Equation \eqref{Eq_Borovoi})
			\begin{equation}\label{Equation_SES}
				0  \to X_*((T_g)_{\operatorname{sc}}) \to X_*(T_g) \to \pi_1(G) \to 0,
			\end{equation}
			where $(T_g)_{\operatorname{sc}}$ is the preimage of $T_g$ in $G_{\operatorname{sc}}$, the simply-connected cover of the derived subgroup $G_{\operatorname{der}}$ of $G$.
			
			We can thus take $\Gamma$-coinvariance to obtain a map
			$$
			X_*(T_g)_{\Gamma} \to \pi_1(G)_{\Gamma}
			$$
			and define
			$$H_g:=\ker(X_*(T_g)_{\Gamma} \to \pi_1(G)_{\Gamma}).$$

           With the choice of Borel containing $(T_g)_{\overline{F}}$, we have an isomorphism of abelian groups
           $$X_*((T_g)_{\operatorname{sc}}) \cong \Lambda_{\widehat{\Phi}},$$
           which is in general not $\Gamma$-equivariant.

			\begin{lemma}\label{Lemma:inflate}
				The image of the coroot lattice $\Lambda_{\widehat{\Phi}}$ under the composition
				$$\Lambda_{\widehat{\Phi}} \cong X_*((T_g)_{\operatorname{sc}}) \xhookrightarrow[]{}  X_*(T_g) \to X_*(T_g)_{\Gamma}$$
				lies in $H_g$. Thus we have an induced map
				$\Lambda_{\widehat{\Phi}} \to H_g.$
				In particular, any character $\chi: H_g \to \mathbb{C}^*$ can be viewed as a character of $\Lambda_{\widehat{\Phi}}$.
			\end{lemma}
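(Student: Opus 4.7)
The plan is to apply the right-exact functor of $\Gamma$-coinvariants to the short exact sequence (\ref{Equation_SES}) and then simply read off $H_g$ as an image, making the containment automatic.

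First, I would apply $(-)_\Gamma$ to the $\Gamma$-equivariant short exact sequence
$$0 \to X_*((T_g)_{\operatorname{sc}}) \to X_*(T_g) \to \pi_1(G) \to 0$$
and use right exactness of coinvariants to obtain an exact sequence
$$X_*((T_g)_{\operatorname{sc}})_\Gamma \to X_*(T_g)_\Gamma \to \pi_1(G)_\Gamma \to 0.$$
By exactness at the middle term, the image of the left map is exactly $H_g = \ker(X_*(T_g)_\Gamma \to \pi_1(G)_\Gamma)$.

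Next, I would observe that the composition $X_*((T_g)_{\operatorname{sc}}) \hookrightarrow X_*(T_g) \twoheadrightarrow X_*(T_g)_\Gamma$ is a $\Gamma$-equivariant map into a group on which $\Gamma$ acts trivially, so it factors canonically through $X_*((T_g)_{\operatorname{sc}})_\Gamma$. Combined with the previous step, the image of this composition lies inside $H_g$. Transporting along the identification $X_*((T_g)_{\operatorname{sc}}) \cong \Lambda_{\widehat{\Phi}}$ then yields the induced homomorphism $\Lambda_{\widehat{\Phi}} \to H_g$. Finally, the last assertion is formal: any character $\chi: H_g \to \mathbb{C}^*$ pulls back via this homomorphism to a character of $\Lambda_{\widehat{\Phi}}$.

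I don't anticipate any real obstacle; the whole argument is a short diagram chase resting on the right exactness of coinvariants. The one mildly delicate point worth flagging is that the identification $X_*((T_g)_{\operatorname{sc}}) \cong \Lambda_{\widehat{\Phi}}$ is only an isomorphism of abelian groups, not of $\Gamma$-modules, but this is harmless since $\Lambda_{\widehat{\Phi}}$ appears in the statement only as an abstract abelian group and no Galois equivariance is being asserted for the resulting map $\Lambda_{\widehat{\Phi}} \to H_g$.
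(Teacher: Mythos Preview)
Your proposal is correct and is essentially the same diagram chase as the paper's proof, which just displays the commutative square between the top short exact sequence and the coinvariant quotients and reads off that the image lands in $H_g$. Your phrasing via right exactness of $(-)_\Gamma$ in fact establishes surjectivity of $\Lambda_{\widehat{\Phi}} \to H_g$ at the same time, which the paper proves separately as Lemma~\ref{Lemma:surj}.
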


			\begin{proof}
				The lemma follows from the following commutative diagram: 
				\begin{center}
					\begin{tikzcd}
						0 & {\Lambda_{\widehat{\Phi}}} & {X_*(T_g)} & {\pi_1(G)} & 0 \\
						&& {X_*(T_g)_{\Gamma}} & {\pi_1(G)_{\Gamma}}
						\arrow[from=1-1, to=1-2]
						\arrow[from=1-2, to=1-3]
						\arrow[from=1-3, to=1-4]
						\arrow[from=1-4, to=1-5]
						\arrow[from=1-3, to=2-3]
						\arrow[from=2-3, to=2-4]
						\arrow[from=1-4, to=2-4]
					\end{tikzcd}.
				\end{center}
			\end{proof}
			
			\begin{lemma}\label{Lemma:surj}
				The induced map $\Lambda_{\widehat{\Phi}} \to H_g$
				from Lemma \ref{Lemma:inflate} is surjective.
			\end{lemma}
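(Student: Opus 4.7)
The plan is to deduce surjectivity directly from the right-exactness of Galois coinvariants applied to the short exact sequence \eqref{Equation_SES}.

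First I would take $\Gamma$-coinvariants of the $\Gamma$-equivariant short exact sequence
\[
0 \to X_*((T_g)_{\operatorname{sc}}) \to X_*(T_g) \to \pi_1(G) \to 0.
\]
Since the functor of $\Gamma$-coinvariants is right exact (being the left adjoint of the trivial $\Gamma$-module functor, or equivalently $H_0$ of group homology), this yields an exact sequence
\[
X_*((T_g)_{\operatorname{sc}})_{\Gamma} \to X_*(T_g)_{\Gamma} \to \pi_1(G)_{\Gamma} \to 0.
\]
In particular, $H_g = \ker(X_*(T_g)_{\Gamma} \to \pi_1(G)_{\Gamma})$ equals the image of the map $X_*((T_g)_{\operatorname{sc}})_{\Gamma} \to X_*(T_g)_{\Gamma}$.

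Next I would factor the map from Lemma \ref{Lemma:inflate} as the composition
\[
\Lambda_{\widehat{\Phi}} \cong X_*((T_g)_{\operatorname{sc}}) \twoheadrightarrow X_*((T_g)_{\operatorname{sc}})_{\Gamma} \to X_*(T_g)_{\Gamma},
\]
where the first arrow is the (tautologically surjective) quotient map onto coinvariants, and the second arrow is the one appearing in the displayed sequence above. The commutativity of the diagram in the proof of Lemma \ref{Lemma:inflate} identifies this composition, landing in $H_g$, with the map $\Lambda_{\widehat{\Phi}} \to H_g$ under consideration. Since the first arrow is surjective and the image of the second arrow is exactly $H_g$ by the previous step, the composition is surjective.

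There is no substantive obstacle here; the only thing to be mindful of is that the isomorphism $\Lambda_{\widehat{\Phi}} \cong X_*((T_g)_{\operatorname{sc}})$ need not be $\Gamma$-equivariant, but this is irrelevant, as the argument only uses that $X_*((T_g)_{\operatorname{sc}})$ surjects onto its coinvariants as an abelian group.
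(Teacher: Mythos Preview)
Your proposal is correct and follows essentially the same approach as the paper: both apply right-exactness of $\Gamma$-coinvariants to the short exact sequence \eqref{Equation_SES} to identify $H_g$ with the image of $X_*((T_g)_{\operatorname{sc}})_{\Gamma} \to X_*(T_g)_{\Gamma}$, and then note that the map from $\Lambda_{\widehat{\Phi}} \cong X_*((T_g)_{\operatorname{sc}})$ factors through this image. Your remark that the (possibly non-$\Gamma$-equivariant) identification $\Lambda_{\widehat{\Phi}} \cong X_*((T_g)_{\operatorname{sc}})$ is harmless here corresponds exactly to the paper's observation that the statement is purely set-theoretic.
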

			
			\begin{proof}
				The statement is set-theoretic, we can thus replace $\Lambda_{\widehat{\Phi}}$ by $X_*((T_g)_{\operatorname{sc}})$.
				
				Taking coinvariance is right exact, we thus have the following exact commutative diagram
				\[\begin{tikzcd}
					0 & {X_*((T_g)_{\operatorname{sc}})} & {X_*(T_g)} & {\pi_1(G)} & 0 \\
					& {X_*((T_g)_{\operatorname{sc}})_{\Gamma}} & {X_*(T_g)_{\Gamma}} & {\pi_1(G)_{\Gamma}} & 0
					\arrow[from=1-1, to=1-2]
					\arrow[from=1-2, to=1-3]
					\arrow[from=1-2, to=2-2]
					\arrow[from=1-3, to=1-4]
					\arrow[from=1-3, to=2-3]
					\arrow[from=1-4, to=1-5]
					\arrow[from=1-4, to=2-4]
					\arrow[from=2-2, to=2-3]
					\arrow[from=2-3, to=2-4]
					\arrow[from=2-4, to=2-5]
				\end{tikzcd}.\]
				This shows that the induced map $\Lambda_{\widehat{\Phi}} \cong X_*((T_g)_{\operatorname{sc}})\to H_g$ is surjective.
			\end{proof}
			
			\begin{lemma}\label{Lemma:fab}
				$H_g$ is a finite abelian group.
			\end{lemma}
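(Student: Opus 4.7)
The plan is to realize $H_g$ as a quotient of the $\Gamma$-coinvariants of the cocharacter lattice of an anisotropic $F$-torus, and then use the fact that coinvariants of an anisotropic torus's cocharacter lattice are finite.

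First, I would apply the right-exact functor of $\Gamma$-coinvariants to the short exact sequence \eqref{Equation_SES}. This yields an exact sequence
\[
X_*((T_g)_{\operatorname{sc}})_\Gamma \longrightarrow X_*(T_g)_\Gamma \longrightarrow \pi_1(G)_\Gamma \longrightarrow 0,
\]
so by definition $H_g$ is exactly the image of $X_*((T_g)_{\operatorname{sc}})_\Gamma$ in $X_*(T_g)_\Gamma$. In particular $H_g$ is a subgroup of an abelian group and is a quotient of $X_*((T_g)_{\operatorname{sc}})_\Gamma$; it therefore suffices to show that the latter is finite.

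Second, I would check that $(T_g)_{\operatorname{sc}}$ is an anisotropic $F$-torus. Since $T_g$ is elliptic, its maximal $F$-split subtorus is contained in the center $Z(G)$. The isogeny $\rho\colon G_{\operatorname{sc}} \twoheadrightarrow G_{\operatorname{der}}$ has finite kernel $Z(G_{\operatorname{sc}})$, and restricts to an isogeny $(T_g)_{\operatorname{sc}} \to T_g \cap G_{\operatorname{der}}$. Any maximal $F$-split subtorus $S$ of $T_g \cap G_{\operatorname{der}}$ is contained in $Z(G) \cap G_{\operatorname{der}} \subseteq Z(G_{\operatorname{der}})$, which is finite. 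Hence $S$ is trivial, $T_g \cap G_{\operatorname{der}}$ is anisotropic, and therefore so is $(T_g)_{\operatorname{sc}}$ (anisotropy being preserved under isogeny).

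Finally, for any anisotropic $F$-torus $S$, the $\Gamma$-action on the finitely generated free abelian group $X_*(S)$ factors through a finite quotient, so tensoring with $\mathbb{Q}$ one has $X_*(S)_\Gamma \otimes \mathbb{Q} \cong X_*(S)^\Gamma \otimes \mathbb{Q}$ (invariants and coinvariants coincide for finite group actions in characteristic zero). Anisotropy of $S$ gives $X_*(S)^\Gamma = 0$, hence $X_*(S)_\Gamma$ is a torsion quotient of a finitely generated abelian group and is therefore finite. Applying this to $S = (T_g)_{\operatorname{sc}}$ concludes the argument.

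I do not expect any real obstacle here; the only point requiring some care is the verification that ellipticity of $T_g$ passes to anisotropy of $(T_g)_{\operatorname{sc}}$, which hinges on the fact that the center of a semisimple group is finite and on the isogeny $G_{\operatorname{sc}} \to G_{\operatorname{der}}$.
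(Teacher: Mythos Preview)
Your proof is correct and follows essentially the same route as the paper: both argue that $H_g$ is a quotient of $X_*((T_g)_{\operatorname{sc}})_\Gamma$ (via right-exactness of coinvariants applied to \eqref{Equation_SES}) and then invoke ellipticity of $T_g$ to conclude that $X_*((T_g)_{\operatorname{sc}})_\Gamma$ is finite. The paper simply asserts the last step, whereas you spell out the passage from ellipticity to anisotropy of $(T_g)_{\operatorname{sc}}$ and the standard argument that coinvariants of the cocharacter lattice of an anisotropic torus are finite.
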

			
			\begin{proof}
				Note in the proof of Lemma \ref{Lemma:surj} we have shown that the map $$X_*((T_g)_{\operatorname{sc}})_{\Gamma} \to H_g$$
				is surjective.  It thus suffices to show that $X_*((T_g)_{\operatorname{sc}})_{\Gamma}$ is finite. This follows from $T_g$ being elliptic.
			\end{proof}

			\subsection{Growth of $\chi(\character V_{\mu_m})$}

			Given a character $\chi: H_g \to \mathbb{C}^*$, by Lemma \ref{Lemma:inflate}, we can view it as a character $\chi: \Lambda_{\widehat{\Phi}} \to \mathbb{C}^*$ by inflation. Extending linearly, we can view it as a character
			$\chi: \mathbb{C}[\Lambda_{\widehat{\Phi}}] \to \mathbb{C},$
			where $\mathbb{C}[\Lambda_{\widehat{\Phi}}]$ is the group algebra of the abelian group $\Lambda_{\widehat{\Phi}}$ with $\mathbb{C}$-coefficients. In particular, it makes sense to talk about $\chi(\character(V_{\mu_m}))$ for $m \in \mathbb{Z}_{\geq 1}$.

			\begin{proposition} \label{Prop:dim}
				Let $k=
				|\widehat{\Phi}^+|$.
				\begin{enumerate}
					\item $\dim V_{\mu_m}$ is a polynomial in $m$ of degree $k$.
					\item Assume that $k \geq 1$. Then for any nontrivial character $\chi$ of $H_g$, $$\chi(\character(V_{\mu_m}))$$
					is bounded by a polynomial in $m$ of degree less or equal to $k-1$.
				\end{enumerate}
			\end{proposition}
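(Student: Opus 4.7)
The plan is to reduce everything to Lemma \ref{Lemma_char}, which expresses $\character V_{\mu_m}$ as a product over positive roots, and then evaluate the character $\chi$ on each factor separately. Recall that by Lemma \ref{Lemma_char},
\[
\character V_{\mu_m} = \prod_{\alpha \in \widehat{\Phi}^+}\bigl(e^{2m\alpha} + e^{(2m-1)\alpha} + \cdots + e^{-2m\alpha}\bigr),
\]
and applying any multiplicative character $\chi$ (extended $\mathbb{C}$-linearly to the group algebra) gives
\[
\chi(\character V_{\mu_m}) = \prod_{\alpha \in \widehat{\Phi}^+}\left(\sum_{j=-2m}^{2m}\chi(\alpha)^j\right).
\]

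For part (1), I would take $\chi$ trivial (equivalently, specialize every $e^{\lambda}$ to $1$). Each factor equals $4m+1$, so $\dim V_{\mu_m} = (4m+1)^k$, which is a polynomial in $m$ of degree $k$.

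For part (2), the key point is that for nontrivial $\chi: H_g \to \mathbb{C}^{\times}$ there exists some $\alpha \in \widehat{\Phi}^+$ with $\chi(\alpha) \neq 1$. This is where the main (small) obstacle lies, and I would argue it as follows. By Lemma \ref{Lemma:surj}, the composition $\Lambda_{\widehat{\Phi}} \to H_g$ is surjective, so $\chi$ pulls back to a nontrivial character of $\Lambda_{\widehat{\Phi}}$; since the positive roots generate $\Lambda_{\widehat{\Phi}} = \mathbb{Z}\widehat{\Phi}$ as an abelian group, at least one positive root $\alpha_0$ must satisfy $\chi(\alpha_0) \neq 1$. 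Because $H_g$ is finite (Lemma \ref{Lemma:fab}), $\chi(\alpha_0)$ is a root of unity different from $1$, so the corresponding factor is a geometric sum
\[
\sum_{j=-2m}^{2m}\chi(\alpha_0)^j = \frac{\chi(\alpha_0)^{2m+1}-\chi(\alpha_0)^{-2m}}{\chi(\alpha_0)-1},
\]
whose absolute value is bounded by $2/|\chi(\alpha_0)-1|$, a constant independent of $m$.

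Finally, I would estimate the remaining $k-1$ factors trivially: each $\chi(\alpha)^j$ is a root of unity, so every factor is bounded in absolute value by $4m+1$. Combining, $|\chi(\character V_{\mu_m})|$ is bounded by a constant times $(4m+1)^{k-1}$, which is a polynomial in $m$ of degree at most $k-1$, completing part (2).
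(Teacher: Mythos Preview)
Your proof is correct and follows essentially the same approach as the paper: apply $\chi$ to the product formula of Lemma \ref{Lemma_char}, use Lemma \ref{Lemma:surj} to find a positive root $\alpha_0$ with $\chi(\alpha_0)\neq 1$, use Lemma \ref{Lemma:fab} to see that $\chi(\alpha_0)$ is a nontrivial root of unity so the corresponding factor is bounded, and bound the remaining $k-1$ factors by $4m+1$. The only cosmetic difference is that the paper phrases the bound on the remaining factors as ``either $4m+1$ or bounded'' while you use the uniform estimate $\bigl|\sum_{j=-2m}^{2m}\chi(\alpha)^j\bigr|\le 4m+1$; both give a degree $\le k-1$ bound.
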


			\begin{proof}
				Let $\chi$ be any character of $H_g$. By Lemma \ref{Lemma_char},

				\begin{align}    	&
					\chi(\character(V_{\mu_m}))  \\
					=\;& \prod_{{\alpha} \in \widehat{\Phi}^+}\chi(e^{2m{\alpha}}+e^{(2m-1){\alpha}}+...+e^{-2m{\alpha}})\\
					=\;&\label{Eq_prod} \prod_{{\alpha}\in \widehat{\Phi}^+}\chi({\alpha})^{-2m}(1+\chi({\alpha})+\chi({\alpha})^2+...+\chi({\alpha})^{4m}).
				\end{align}

				We focus on each term for $\alpha \in \widehat{\Phi}^+$. If $\chi({\alpha})=1$, then  
				$$\chi({\alpha})^{-2m}(1+\chi({\alpha})+\chi({\alpha})^2+...+\chi({\alpha})^{4m})=4m+1.$$
				If $\chi({\alpha}) \neq 1$, $\chi({\alpha})$ is some nontrivial root of unity since $H_g$ is finite abelian (Lemma \ref{Lemma:fab}). Therefore,
				$$\chi({\alpha})^{-2m}(1+\chi({\alpha})+\chi({\alpha})^2+...+\chi({\alpha})^{4m})$$
				is bounded as $m \to \infty$.
				
				We prove the first claim concerning $\dim V_{\mu_m}$. Let $\mathbbm{1}$ be the trivial character of $H_g$. By Lemma \ref{Lemma_char},
				$$\dim(V_{\mu_m})=\mathbbm{1}(\character(V_{\mu_m}))=\prod_{{\alpha} \in \widehat{\Phi}^+}(4m+1)=(4m+1)^k$$
				is indeed a polynomial in $m$ of degree $k$.
				
				Now we prove the second claim concerning $\chi(\character(V_{\mu_m}))$ for a nontrivial character $\chi$ of $H_g$. By Lemma \ref{Lemma:surj}, $\chi$ is non-trivial as a character of $\LPhi$. Since $\LPhi$ is spanned by $\widehat{\Phi}^+$ as an abelian group, there is some ${\beta} \in \widehat{\Phi}^+$ such that $\chi({\beta}) \neq 1$. Hence the corresponding (to ${\beta}$) term in Equation (\ref{Eq_prod}) is bounded. Each of the rest terms is either $(4m+1)$ or bounded.  Therefore, $\chi(\character(V_{\mu_m}))$ is bounded by a polynomial in $m$ of degree $\leq k-1$.
				
			\end{proof}

			\section{Proof of Theorem \ref{Thm_main_general}}
			
			\subsection{Hecke eigensheaves}
			
			Let $\varphi$ be an elliptic $L$-parameter, which defines a point in the stack of $L$-parameters. Let
			$$i_{\varphi}: [*/S_{\varphi}] \to [Z^1(W_F, \widehat{G})/\widehat{G}]$$
			be the inclusion. Recall that $\mathcal{O}(S_{\varphi})$ is the regular representation of the $\Ql$-algebraic group
			$S_{\varphi}$ (Subsection \ref{Subsection_regular_rep}). While $\OS$ is not a perfect complex on $[*/S_{\varphi}]$ in general, its isotypic summands $\OS_{\chi}$ are perfect complexes for any $\chi \in X^*(\ZG)$. The goal of this section is to show that for any sheaf $A \in \operatorname{D}(\Bun_G)^{\omega}$,
			$$i_{\varphi*}\mathcal{O}(S_{\varphi})*A:=\bigoplus_{\chi \in X^*(\ZG)}(i_{\varphi*}\OS_{\chi}*A)$$
			is a Hecke eigensheaf, i.e., for any finite-dimensional representation $V \in \Rep_{\widehat{G}}(\Ql)$,
			$$T_V(i_{\varphi*}\OS*A)=\left(i_{\varphi*}\OS*A\right)^{\oplus\dim V}.$$

			\begin{lemma}\label{Lemma:regular_rep}
				For any finite dimensional algebraic representation $W$ of $S_{\varphi}$, we have an isomorphism as $S_{\varphi}$-representations $$W \otimes \OS \cong \OS^{\oplus\dim W}.$$
			\end{lemma}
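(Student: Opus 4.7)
The statement is a standard fact about the regular representation of an algebraic group, and the plan is to deduce it cleanly from Lemma \ref{Lemma_O(S_phi)}, which has already determined the decomposition of $\OS$ into irreducibles. Since $\varphi$ is elliptic, in particular semisimple, $S_{\varphi}$ is reductive (as the centralizer of a semisimple element of $\widehat{G}$), so finite-dimensional representations of $S_{\varphi}$ over $\Ql$ are completely reducible. Hence it suffices to check that $W \otimes \OS$ and $\OS^{\oplus \dim W}$ have the same multiplicity of every irreducible $S_{\varphi}$-representation $W''$.

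The key computation is tensor-hom adjunction: using that $W$ is finite-dimensional, so $W \otimes \OS \cong \Hom(W^{\vee}, \OS)$ as $S_{\varphi}$-representations, one obtains
$$\Hom_{S_{\varphi}}(W'', W \otimes \OS) \cong \Hom_{S_{\varphi}}(W^{\vee} \otimes W'', \OS).$$
By Lemma \ref{Lemma_O(S_phi)}, the right-hand side has dimension $\dim(W^{\vee} \otimes W'') = \dim W \cdot \dim W''$. The same lemma gives $\dim \Hom_{S_{\varphi}}(W'', \OS^{\oplus \dim W}) = \dim W \cdot \dim W''$, so the multiplicities of $W''$ on both sides agree; semisimplicity then promotes the numerical match to an isomorphism of representations.

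An alternative, more concrete approach would be to write down an explicit intertwiner. Identifying $W \otimes \OS$ with regular $W$-valued functions on $S_{\varphi}$, the map $\Phi(f)(h) := \rho(h^{-1}) f(h)$, where $\rho$ denotes the $S_{\varphi}$-action on $W$, is an automorphism (with inverse $f(h) \mapsto \rho(h) f(h)$) and a direct check transports the diagonal $S_{\varphi}$-action on the source to the action on $\OS$ alone on the target. This exhibits $W \otimes \OS$ as a trivial $S_{\varphi}$-representation of dimension $\dim W$ tensored with $\OS$, i.e. $\OS^{\oplus \dim W}$.

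There is no real obstacle here; the lemma is routine given the earlier material. The only hypothesis that matters is the finite-dimensionality of $W$, which is needed both for tensor-hom adjunction (to identify $W \otimes \OS$ with $\Hom(W^{\vee}, \OS)$) and for the explicit formula $\Phi$ to be well-defined. I would favor the multiplicity argument in the writeup since Lemma \ref{Lemma_O(S_phi)} is already in hand and the adjunction makes the computation essentially one line.
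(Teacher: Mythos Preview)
Your proposal is correct, but the paper takes a different and slicker route: it views algebraic $S_{\varphi}$-representations as quasicoherent sheaves on $[*/S_{\varphi}]$, identifies $\OS$ with $\pr_*\Ql$ for $\pr: * \to [*/S_{\varphi}]$, and applies the projection formula $W \otimes \pr_*\Ql \cong \pr_*(\pr^*W \otimes \Ql) \cong \pr_*(\Ql^{\oplus \dim W}) = \OS^{\oplus \dim W}$. This is a one-line argument that uses no semisimplicity hypothesis and works for an arbitrary affine algebraic group, not just $S_{\varphi}$. Your primary argument via multiplicity matching and Lemma \ref{Lemma_O(S_phi)} is valid, but it leans on complete reducibility of $S_{\varphi}$-representations; note that your justification (``centralizer of a semisimple element'') is a bit loose, since $S_{\varphi}$ is the centralizer of a cocycle rather than of a single element, though in the elliptic case $S_{\varphi}/Z(\widehat{G})^{\Gamma}$ is finite and the conclusion holds. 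Your alternative explicit intertwiner $\Phi(f)(h)=\rho(h^{-1})f(h)$ is in fact the projection formula written out by hand, so that approach is essentially the paper's argument in coordinates. If you want the cleanest writeup, the projection formula version is preferable: it avoids the reductivity discussion entirely and makes the finite-dimensionality hypothesis transparently the only input.
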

			
			\begin{proof}
				Let $*=\Spec \Ql$. We view algebraic representations of $S_\varphi$ as quasicoherent sheaves on the classifying stack $[*/S_{\varphi}]$. Let $\pr: * \to [*/S_{\varphi}]$ be the quotient map. Then the regular representation corresponds to the pushforward of the structure sheaf
				$$\OS=\pr_*\Ql.$$
				By projection formula,
				$$W \otimes \OS=W \otimes \pr_*\Ql \cong \pr_*(\pr^*W \otimes \Ql)=\pr_*(\Ql^{\oplus \dim W})=\OS^{\oplus\dim W}.$$
			\end{proof}

			\begin{proposition}\label{Prop:Hecke_eigen}
				For any $A \in \operatorname{D}(\Bun_G)^{\omega}$, $i_{\varphi*}\OS*A$ is a Hecke eigensheaf.
			\end{proposition}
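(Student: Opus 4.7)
The plan is to reduce the Hecke eigensheaf property to Lemma~\ref{Lemma:regular_rep} by using the compatibility of the spectral action with the Hecke action recorded in Lemma~\ref{Lemma_Spectral_Hecke}. For each individual $\chi \in X^*(\ZG)$, the summand $\OS_{\chi}$ is a perfect complex on $[*/S_{\varphi}]$, so $i_{\varphi*}\OS_{\chi}$ is a perfect complex on $\ZoneG$ and the spectral action by it is genuinely defined. Writing $V_{\Psi}$ for the vector bundle on $\ZoneG$ associated to the representation $V \in \Rep_{\widehat{G}}(\Ql)$, Lemma~\ref{Lemma_Spectral_Hecke} identifies the Hecke operator $T_V$ with the spectral action by $V_{\Psi}$, yielding
$$T_V(i_{\varphi*}\OS_{\chi} * A) \cong (V_{\Psi} \otimes i_{\varphi*}\OS_{\chi}) * A.$$

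Next I would apply the projection formula for $i_{\varphi}$, together with the observation that $i_{\varphi}^* V_{\Psi}$ is simply the restriction $V|_{S_{\varphi}}$ of $V$ along the inclusion $S_{\varphi} \hookrightarrow \widehat{G}$, to rewrite the right-hand side as $i_{\varphi*}(V|_{S_{\varphi}} \otimes \OS_{\chi}) * A$. Taking the direct sum over all $\chi$ and commuting the tensor product past the sum gives $V|_{S_{\varphi}} \otimes \OS$, which by Lemma~\ref{Lemma:regular_rep} is isomorphic to $\OS^{\oplus \dim V}$; decomposing the latter back along $\ZG$-isotypic components yields $\bigoplus_{\chi} \OS_{\chi}^{\oplus \dim V}$. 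Since $T_V$ is exact and commutes with arbitrary direct sums (being a pull-push along Hecke correspondences, hence having both adjoints), the left-hand side assembles to $T_V(i_{\varphi*}\OS * A)$, and we obtain
$$T_V(i_{\varphi*}\OS * A) \cong \bigoplus_{\chi} i_{\varphi*}\OS_{\chi}^{\oplus \dim V} * A = (i_{\varphi*}\OS * A)^{\oplus \dim V}.$$

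The main subtlety lies in the book-keeping for the direct sum over $\chi$, which is potentially infinite: one needs to know that $i_{\varphi*}$, tensoring with the dualizable sheaf $V_{\Psi}$ on $\ZoneG$, and the spectral action all commute with such direct sums. Each of these is either a left adjoint or is bicontinuous thanks to dualizability, so this is essentially automatic. Beyond this the argument is a clean chain of natural isomorphisms, and the same reasoning handles the $W_F$-equivariant refinement since the compatibility of Hecke with the spectral action in Lemma~\ref{Lemma_Spectral_Hecke} is itself $W_F$-equivariant.
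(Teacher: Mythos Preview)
Your proof is correct and follows essentially the same route as the paper's: both identify $T_V$ with the spectral action by the pullback of $V$ to $\ZoneG$ (you write $V_\Psi$, the paper writes $f^*V$ for $f\colon \ZoneG \to [*/\widehat{G}]$), apply the projection formula along $i_\varphi$, and then invoke Lemma~\ref{Lemma:regular_rep}. The only difference is presentational: the paper runs the chain of isomorphisms directly with $\OS$ (relying on the definition $i_{\varphi*}\OS*A := \bigoplus_\chi i_{\varphi*}\OS_\chi*A$ given just before the proposition), whereas you work summand-by-summand and reassemble at the end, making the commutation with the direct sum explicit.
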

			
			\begin{proof}
				Consider the maps 
				\begin{tikzcd}
					{[*/S_{\varphi}]} & {[Z^1(W_F, \widehat{G})/\widehat{G}]} & {[*/\widehat{G}]}
					\arrow["{i_{\varphi}}", from=1-1, to=1-2]
					\arrow["f", from=1-2, to=1-3]
				\end{tikzcd}, where $f$ is the natural projection from $[Z^1(W_F, \widehat{G})/\widehat{G}]$ to $[*/\widehat{G}]$. Note that the composition is the natural projection from $[*/S_{\varphi}]$ to $[*/\widehat{G}]$, so $i_{\varphi}^*f^*$ is the restriction from $\widehat{G}$ to $S_{\varphi}$.
				\begin{align*}    	
					& T_V(i_{\varphi*}\OS*A)  \\
					=\;&  f^*V * \left(i_{\varphi*}\OS*A\right) && \text{Compatibility of Hecke operators with spectral action}\\
					=\;& (f^*V \otimes i_{\varphi*}\OS)*A && \text{Spectral action is monoidal}&& 
					\\
					=\;& \left(i_{\varphi*}(i_{\varphi}^*f^*V \otimes \OS)\right)*A && \text{Projection formula}\\
					=\;& \left(i_{\varphi*}( \OS^{\oplus\dim V})\right)*A && \text{Lemma \ref{Lemma:regular_rep}}\\
					=\;& \left(i_{\varphi*}\OS*A\right)^{\oplus\dim V}. 
				\end{align*}
				
			\end{proof}

			\subsection{The Harish-Chandra  character $\Theta_{\pi_0}$ as a weighted sum}
			Recall that $\varphi$ is an elliptic $L$-parameter. For every $\pi \in \Irr_{\Ql} G(F)$ such that $\varphi_{\pi}^{\FS}=\varphi$, consider \begin{equation}\label{Notation_F0}
				\mathcal{F}_0:=(i_{\varphi})_*\mathcal{O}\left(S_{\varphi}/Z(\widehat{G})^{\Gamma}\right)*(i_1)_!\pi .
			\end{equation}
			Let $\pi_0:=i_1^*\mathcal{F}_0.$ Note that $\pi_0 \in \bigoplus_{b \in B(G)_{\basic}}\operatorname{D}(G_b(F), \Ql)$ is a direct sum of finitely many irreducible representations up to degree shifts (Proposition \ref{Proposition_elliptic}). In particular, the Harish-Chandra character $\Theta_{\pi_0}$ is well defined (Definition \ref{Def_HC}).
			For any $g \in G(F)_{\elliptic}$, the goal of this section is to express the Harish-Chandra character $\Theta_{\pi_0}(g)$ as a weighted sum over the stable conjugacy class of $g$ modulo $G(F)$-conjugacy. This is done by the following equation on Harish-Chandra characters
			\begin{equation}\label{Eq_Hecke_HC}
				\mathcal{T}_{V_{\mu_m}}\Theta_{\pi_0}(g)=\Theta_{i_1^*T_{V_{\mu_m}}(i_1)_*\pi_0}(g)=\dim(V_{\mu_m})\Theta_{\pi_0}(g),
			\end{equation}
			where $\mathcal{T}_{V_{\mu_m}}:=T_{1, \mu_m}^{G \to G}$ is the Hecke transfer map defined in \cite[Definition 3.2.7]{hansen2022kottwitz}. 
			
			First, let us prove the first equality of Equation (\ref{Eq_Hecke_HC}).
			
			\begin{lemma}\label{Lemma_first_eq}
				For any $g \in G(F)_{\elliptic}$,
				$$\mathcal{T}_{V_{\mu_m}}\Theta_{\pi_0}(g)=\Theta_{i_1^*T_{V_{\mu_m}}(i_1)_*\pi_0}(g).$$
			\end{lemma}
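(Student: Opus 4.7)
The plan is to reduce the identity to the Hansen--Kaletha--Weinstein formula (Theorem \ref{Theorem_HKW}) applied with $b = 1$ and $\mu = \mu_m$, which in that case reads
$$\Theta_{\Mant_{G, 1, \mu_m}(\pi_0)}(g) \;=\; \mathcal{T}_{V_{\mu_m}}\Theta_{\pi_0}(g)$$
for every $g \in G(F)_{\elliptic}$. Granting this, the lemma reduces to identifying the virtual $G(F)$-representation $i_1^*T_{V_{\mu_m}}(i_1)_*\pi_0$ with $\Mant_{G, 1, \mu_m}(\pi_0)$, at least at the level of Harish-Chandra characters.

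First I would use Lemma \ref{Lemma:*!} to replace $(i_1)_*\pi_0$ by $(i_1)_!\pi_0 = \mathcal{F}_0$; this is legitimate because $\mathcal{F}_0$ lies in $\operatorname{D}^{C_{\varphi}}(\Bun_G, \Ql)^{\omega}$. After this replacement, the composition $i_1^*T_{V_{\mu_m}}(i_1)_!\pi_0$ is exactly the form in which the Fargues--Scholze Hecke operator, constructed by pull-push along the Hecke correspondence together with the local shtuka tower $\operatorname{Sht}_{G, 1, \mu_m}$, is identified with the Mantovan-type cohomology $\Mant_{G, 1, \mu_m}(\pi_0)$. This identification is built into the setup of [HKW, Section~6] and is one of the ingredients feeding into the proof of Theorem \ref{Theorem_HKW} itself; it gives the required isomorphism in $\operatorname{D}(G(F), \Ql)$ up to the admissibility/finiteness already ensured by Proposition \ref{Proposition_elliptic}.

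Because $\pi_0$ is by Proposition \ref{Proposition_elliptic} a finite direct sum of irreducible admissible representations up to degree shifts, and because both sides of the desired equality are additive in virtual representations, it suffices to verify the identity on each irreducible summand with appropriate signs; then HKW applies directly and one sums with signs to obtain the stated character equality.

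The main technical point is the swap between $(i_1)_*$ and $(i_1)_!$: the Fargues--Scholze spectral action on $\Bun_G$ is naturally phrased through $*$-extension, while the shtuka-cohomology description underlying HKW uses $!$-extension, and it is precisely the ellipticity of $\varphi$ (through Lemma \ref{Lemma:support} and Lemma \ref{Lemma:*!}) that lets us identify the two. Once this swap is in place, the rest is bookkeeping with the HKW formula.
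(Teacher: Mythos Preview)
Your overall strategy matches the paper's: reduce to the Hansen--Kaletha--Weinstein character identity (Theorem~\ref{Theorem_HKW}) via the identification of $i_1^*T_{V}(i_1)_*\pi_0$ with Mantovan-type cohomology coming from \cite[Section~6]{hansen2022kottwitz}. However, one ingredient is missing from your argument: Lemma~\ref{Lemma_selfdual}, the self-duality $V_{\mu_m}^{\vee} \cong V_{\mu_m}$.

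The identification between the geometric Hecke operator on $\Bun_G$ and the Mantovan functor introduces a contragredient. The general relation, as the paper records later (see also Equation~(\ref{Equation:key}) in the introduction) for arbitrary dominant $\mu$, reads
\[
\mathcal{T}_{V_{\mu}}\Theta_{\mathcal{F}} \;=\; \Theta_{T_{V_{\mu}^{\vee}}\mathcal{F}},
\]
and this is what one extracts from \cite[Theorem~6.4.5 and Theorem~6.5.2]{hansen2022kottwitz}. In particular, $i_1^*T_{V_{\mu_m}}(i_1)_*\pi_0$ corresponds to $\Mant_{G,1,-w_0\mu_m}(\pi_0)$, not to $\Mant_{G,1,\mu_m}(\pi_0)$ as you assert; the two agree here precisely because $V_{\mu_m}$ is self-dual. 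Without invoking Lemma~\ref{Lemma_selfdual} (or an equivalent observation), your step ``identify $i_1^*T_{V_{\mu_m}}(i_1)_!\pi_0$ with $\Mant_{G,1,\mu_m}(\pi_0)$'' is unjustified. Once you add that, your argument is essentially the paper's proof.
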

			
			\begin{proof}
				This follows from \cite[Theorem 6.4.5 and Theorem 6.5.2]{hansen2022kottwitz} and Lemma \ref{Lemma_selfdual}.
			\end{proof}

			Now let us turn to the second equality of (\ref{Eq_Hecke_HC}). Consider the sheaf 
			$$\mathcal{F}:=(i_{\varphi})_*\mathcal{O}(S_{\varphi})*(i_1)_!\pi.$$
			By Proposition \ref{Prop:Hecke_eigen}, $\mathcal{F}$ is a Hecke eigensheaf. 
			
			\begin{lemma}\label{Lemma_F_0_Hecke_eigen}
				For any $m \in \mathbb{Z}_{\geq 1}$, $\mathcal{F}_0$ is a Hecke eigensheaf \textbf{with respect to $T_{V_{\mu_m}}$}, i.e.
				$$T_{V_{\mu_m}}(\mathcal{F}_0)\cong \mathcal{F}_0^{\oplus \dim(V_{\mu_m})}.$$
			\end{lemma}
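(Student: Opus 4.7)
The plan is to refine the proof of Proposition \ref{Prop:Hecke_eigen} by keeping track of the $Z(\widehat{G})^{\Gamma}$-isotypic decomposition. Set $\mathcal{F}:=(i_{\varphi})_{*}\mathcal{O}(S_{\varphi}) * (i_{1})_{!}\pi$. Equation \eqref{Eq:res_to_Z} gives
\begin{equation*}
    \mathcal{F} \cong \bigoplus_{\chi \in X^{*}(Z(\widehat{G})^{\Gamma})} \mathcal{F}_{\chi}, \qquad \mathcal{F}_{\chi} := (i_{\varphi})_{*}\mathcal{O}(S_{\varphi})_{\chi} * (i_{1})_{!}\pi,
\end{equation*}
and by Lemma \ref{Lemma_OSZ} the summand indexed by the trivial $\chi=0$ is exactly $\mathcal{F}_{0}$ as defined in \eqref{Notation_F0}. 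So it suffices to prove the stronger statement that each $\mathcal{F}_{\chi}$ is a Hecke eigensheaf with respect to $T_{V_{\mu_{m}}}$.

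The key algebraic input is the identity
\begin{equation*}
    V_{\mu_{m}}|_{S_{\varphi}} \otimes \mathcal{O}(S_{\varphi})_{\chi} \cong \mathcal{O}(S_{\varphi})_{\chi}^{\oplus \dim V_{\mu_{m}}}
\end{equation*}
as $S_{\varphi}$-representations. To establish this, I would first observe that $\mu_{m}=4m\rho_{G}=2m(2\rho_{G})$ lies in the root lattice $\Lambda_{\widehat{\Phi}}$, since $2\rho_{G}=\sum_{\alpha \in \widehat{\Phi}^{+}}\alpha \in \Lambda_{\widehat{\Phi}}$. Because all weights of $V_{\mu_{m}}$ lie in $\mu_{m}+\Lambda_{\widehat{\Phi}}\subseteq \Lambda_{\widehat{\Phi}}$ and $X^{*}(Z(\widehat{G}))=X^{*}(\widehat{T})/\Lambda_{\widehat{\Phi}}$, the center $Z(\widehat{G})$, and a fortiori $Z(\widehat{G})^{\Gamma}$, acts trivially on $V_{\mu_{m}}$. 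Consequently $V_{\mu_{m}}|_{S_{\varphi}} \otimes \mathcal{O}(S_{\varphi})_{\chi}$ remains $\chi$-isotypic under $Z(\widehat{G})^{\Gamma}$, and the identity follows by comparing $\chi$-isotypic components in the isomorphism $V_{\mu_{m}}\otimes \mathcal{O}(S_{\varphi})\cong \mathcal{O}(S_{\varphi})^{\oplus \dim V_{\mu_{m}}}$ supplied by Lemma \ref{Lemma:regular_rep}.

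With this in hand, the chain of manipulations in the proof of Proposition \ref{Prop:Hecke_eigen}—using Lemma \ref{Lemma_Spectral_Hecke}, the monoidality of the spectral action, and the projection formula along $i_{\varphi}\colon [*/S_{\varphi}]\to [Z^{1}(W_{F},\widehat{G})/\widehat{G}]$—yields
\begin{equation*}
    T_{V_{\mu_{m}}}(\mathcal{F}_{\chi}) \cong (i_{\varphi})_{*}\bigl(V_{\mu_{m}}|_{S_{\varphi}} \otimes \mathcal{O}(S_{\varphi})_{\chi}\bigr) * (i_{1})_{!}\pi \cong \mathcal{F}_{\chi}^{\oplus \dim V_{\mu_{m}}},
\end{equation*}
and specializing to $\chi=0$ gives the statement of the lemma.

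I do not expect any serious obstacle here; the argument is essentially the proof of Proposition \ref{Prop:Hecke_eigen} with the extra bookkeeping of the central-character decomposition. The only point deserving care is the verification that $Z(\widehat{G})^{\Gamma}$ acts trivially on $V_{\mu_{m}}$, which hinges precisely on the choice $\mu_{m}=4m\rho_{G}\in \Lambda_{\widehat{\Phi}}$; this is one of the reasons the factor $4$ (rather than $1$) appears in the definition of $\mu_{m}$.
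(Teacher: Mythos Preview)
Your proof is correct and takes a somewhat different route from the paper. The paper argues on the automorphic side: it invokes Proposition~\ref{Prop:Hecke_eigen} for the full sheaf $\mathcal{F}=(i_{\varphi})_{*}\mathcal{O}(S_{\varphi})*(i_{1})_{!}\pi$, then uses Lemma~\ref{Lemma_pi_1} (Hecke operators respect the $\pi_{1}(G)_{\Gamma}$-grading on $\operatorname{D}(\Bun_{G})$) together with the fact that $\mu_{m}\in\Lambda_{\widehat{\Phi}}$ has trivial image in $\pi_{1}(G)_{\Gamma}$, so that the eigensheaf identity for $\mathcal{F}$ restricts component-by-component to each $\mathcal{F}_{\chi}$. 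You instead stay entirely on the spectral side: from $\mu_{m}\in\Lambda_{\widehat{\Phi}}$ you deduce that $Z(\widehat{G})^{\Gamma}$ acts trivially on $V_{\mu_{m}}$, hence tensoring with $V_{\mu_{m}}|_{S_{\varphi}}$ preserves each $\mathcal{O}(S_{\varphi})_{\chi}$, and then rerun the projection-formula computation of Proposition~\ref{Prop:Hecke_eigen} with $\mathcal{O}(S_{\varphi})_{\chi}$ in place of $\mathcal{O}(S_{\varphi})$. The two arguments are dual under $X^{*}(Z(\widehat{G})^{\Gamma})\cong\pi_{1}(G)_{\Gamma}$; yours is more self-contained (no appeal to Lemma~\ref{Lemma_pi_1}), while the paper's is shorter because it treats that compatibility as a black box.

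One small inaccuracy in your closing remark: the condition $\mu_{m}\in\Lambda_{\widehat{\Phi}}$ would already hold for $2m\rho_{G}$ since $2\rho_{G}=\sum_{\alpha\in\widehat{\Phi}^{+}}\alpha\in\Lambda_{\widehat{\Phi}}$; the factor $4$ is there to force $(-1)^{d}=1$ (cf.\ the remark after Theorem~\ref{Thm_indep} in the introduction), not for the present lemma.
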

			
			\begin{proof}
				This follows from Proposition \ref{Prop:Hecke_eigen} and Lemma \ref{Lemma_pi_1} by noticing that the image of $\mu_m \in \Lambda_{\widehat{\Phi}}$ in $\pi_1(G)_{\Gamma}$ is trivial (Lemma \ref{Lemma:inflate}).			
			\end{proof}
			
			\begin{remark}
				As the Hecke operators $T_V$ will shift around the strata in general, $\mathcal{F}_0$ is not a Hecke eigensheaf for all $T_V, V \in \Rep_{\widehat{G}}(\Ql)$.
			\end{remark}

			\begin{lemma}
				$$\Theta_{i_1^*T_{V_{\mu_m}}(i_1)_*\pi_0}=\dim(V_{\mu_m})\Theta_{\pi_0}.$$
			\end{lemma}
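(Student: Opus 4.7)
The plan is to chain together the two results we have just established: that $\mathcal{F}_0$ represents $\pi_0$ upon pushforward (with $!$ and $*$ agreeing), and that $\mathcal{F}_0$ is a Hecke eigensheaf for $T_{V_{\mu_m}}$. Nothing new needs to be proven; the lemma is essentially a bookkeeping statement that translates the eigensheaf identity on $\Bun_G$ into an identity of Harish-Chandra characters on $G(F)$.

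First, I would invoke Lemma \ref{Lemma:*!} applied to $\pi_0 \in \operatorname{D}^{C_\varphi}(\Bun_G, \Ql)^\omega$ (where $\pi_0$ is regarded as sitting inside $\operatorname{D}(\Bun_G, \Ql)$ via $i_{1,!}$) to identify
\[
(i_1)_* \pi_0 \;\cong\; (i_1)_! \pi_0 \;\cong\; \mathcal{F}_0,
\]
using that $\mathcal{F}_0 \cong (i_1)_! \pi_0$ since $\mathcal{F}_0$ is supported on the open substack $\Bun_G^{\operatorname{ss}}$ and in particular $\pi_0 = i_1^*\mathcal{F}_0$ recovers $\mathcal{F}_0$ after $(i_1)_!$.

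Next, I would apply the Hecke operator $T_{V_{\mu_m}}$ and invoke Lemma \ref{Lemma_F_0_Hecke_eigen} to obtain
\[
T_{V_{\mu_m}}(i_1)_* \pi_0 \;\cong\; T_{V_{\mu_m}}\mathcal{F}_0 \;\cong\; \mathcal{F}_0^{\oplus \dim V_{\mu_m}}.
\]
Pulling back along $i_1$ then gives
\[
i_1^* T_{V_{\mu_m}}(i_1)_* \pi_0 \;\cong\; (i_1^* \mathcal{F}_0)^{\oplus \dim V_{\mu_m}} \;\cong\; \pi_0^{\oplus \dim V_{\mu_m}}.
\]

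Finally, the Harish-Chandra character is additive on direct sums (Definition \ref{Def_HC}), so taking $\Theta$ of both sides yields exactly $\Theta_{i_1^* T_{V_{\mu_m}}(i_1)_* \pi_0} = \dim(V_{\mu_m}) \Theta_{\pi_0}$. There is no real obstacle here: the only subtle point is that one must apply the eigensheaf identity at the level of $\Bun_G$ before restricting back to $[*/G(F)]$, rather than trying to work with $\pi_0$ directly, since the Hecke operators are defined on $\operatorname{D}(\Bun_G, \Ql)$ and involve the geometry of the other strata.
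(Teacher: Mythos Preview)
Your proposal is correct and follows exactly the paper's approach: the paper's proof is the one-line statement ``This follows from Lemma \ref{Lemma_F_0_Hecke_eigen} and Lemma \ref{Lemma:*!},'' and you have simply written out the implicit chain of identifications $(i_1)_*\pi_0 \cong \mathcal{F}_0$, $T_{V_{\mu_m}}\mathcal{F}_0 \cong \mathcal{F}_0^{\oplus \dim V_{\mu_m}}$, $i_1^*\mathcal{F}_0 = \pi_0$, and additivity of the Harish-Chandra character.
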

			
			\begin{proof}
				This follows from Lemma \ref{Lemma_F_0_Hecke_eigen} and Lemma \ref{Lemma:*!}.
			\end{proof}
			
			Now we have finished the proof of Equation (\ref{Eq_Hecke_HC}).

			A corollary of Equation (\ref{Eq_Hecke_HC}) is an explicit formula for $\Theta_{\pi_0}$. To state this, let us recall some definitions from \cite{hansen2022kottwitz}. Let $g \in G(F)_{\elliptic}$ and $T_g=\Cent(g, G)$ with a choice of Borel over $\overline{F}$ containing $(T_g)_{\overline{F}}$. For $\lambda \in X_*(T_g)$ and $V \in \Rep_{\widehat{G}}(\Ql)$, let $\dim V[\lambda]$ be the weight multiplicity of $\lambda$ in $V$. Moreover, for $\lambda \in X_*(T_g)$, let $\overline{\lambda}$ be the image of $\lambda$ in $X_*(T_g)_{\Gamma} \cong B(T_g)$.
			
			\begin{definition}[{\cite[Definition 3.2.2]{hansen2022kottwitz}}]\label{Defition_inv}
				Let $b \in G(\breve{F})$ be a basic element. Let $g_1 \in G(F)_{\sr}$ and $g_b \in G_b(F)_{\sr}$ such that $g_1$ is conjugate to $g_b$ in $G(\overline{F})$. Let $T_1:=\Cent(g_1, G)$. Define $\inv[b](g_1, g_b)$ to be  the class of $y^{-1}by^{\sigma}$ in $B(T_1) \cong X_*(T_1)_{\Gamma}$, where $y \in G(\breve{F})$ satisfies $g_b=yg_1y^{-1}$. For $b=1$, we will denote 
				$\inv(g_1, g_b):=\inv[1](g_1, g_b).$
			\end{definition}

			We record some immediate observations which will be used later.
			
			\begin{lemma}\label{Lemma:ker}
				For $b=1$, $\inv(g_1, g_b) \in \ker(B(T_1) \to B(G)).$
			\end{lemma}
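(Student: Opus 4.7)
The plan is to unwind the definition directly and exhibit $\sigma$-conjugation in $G(\breve{F})$ that trivializes the relevant element.

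First I would recall Definition \ref{Defition_inv} in the case $b=1$. Since $g_1, g_b \in G(F)$ are conjugate in $G(\overline{F})$, pick $y \in G(\breve{F})$ with $g_b = y g_1 y^{-1}$. Then $\inv(g_1, g_b)$ is by definition the class of $y^{-1} \cdot 1 \cdot y^\sigma = y^{-1} y^\sigma$ in $B(T_1) \cong X_*(T_1)_\Gamma$, where one checks (as in \cite{hansen2022kottwitz}) that $y^{-1} y^\sigma$ lies in $T_1(\breve{F})$ because $y^{-1} y^\sigma$ commutes with $g_1$ and $g_1$ is strongly regular. The natural map $B(T_1) \to B(G)$ is induced by viewing $y^{-1} y^\sigma$ as an element of $G(\breve{F})$ and taking its $\sigma$-conjugacy class.

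The key calculation is then immediate: taking the element $y \in G(\breve{F})$ as a $\sigma$-conjugator, one has
$$y \cdot (y^{-1} y^\sigma) \cdot \sigma(y)^{-1} = y^\sigma \cdot (y^\sigma)^{-1} = 1,$$
so $y^{-1} y^\sigma$ is $\sigma$-conjugate to the identity in $G(\breve{F})$. Hence its image in $B(G)$ is trivial, and $\inv(g_1, g_b) \in \ker(B(T_1) \to B(G))$ as claimed.

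There is essentially no obstacle here; the lemma is a direct consequence of unwinding the definition, and the only content is the observation that the cocycle obstruction $y^{-1} y^\sigma$ that measures the failure of $y$ to lie in $G(F)$ becomes a coboundary for $G$ the moment we pass from $T_1$ to $G$, because $y$ itself serves as the trivializing element.
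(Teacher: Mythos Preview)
Your proof is correct and is essentially identical to the paper's own argument: both observe that $y^{-1}y^{\sigma} = y^{-1}\cdot 1 \cdot y^{\sigma}$ is the $\sigma$-conjugate of $1$ by $y$ in $G(\breve{F})$, hence trivial in $B(G)$. You have simply spelled out the verification in slightly more detail than the paper's one-line proof.
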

			
			\begin{proof}
				Note that $y^{-1}y^{\sigma}=y^{-1}.1.y^{\sigma}$ lies in the $\sigma$-conjugacy class of $1 \in G(\breve{F})$.
			\end{proof}
			
		\begin{lemma}\label{Lemma_inv_surj}
					For any $g \in G(F)_{\elliptic}$ and $T_g=\Cent(g, G)$. Denote the natural map $B(T_g) \to B(G)$ by $\alpha$. Then for any $b \in B(G)_{\basic}$,
					$$\inv[b](g, -): \{g' \in G_b(F) \;|\; g' \;\text{is stably conjugate to}\; g\} \to B(T_g)$$
					factors through $\alpha^{-1}(b) \subseteq B(T_g)$ and is surjective onto $\alpha^{-1}(b)$.
			\end{lemma}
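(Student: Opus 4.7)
The plan is to unpack Definition \ref{Defition_inv} and check each assertion directly; there is no deep obstacle, only careful bookkeeping with $\sigma$-conjugation.

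For the \emph{factoring} statement, I would start with $g' \in G_b(F)_{\sr}$ stably conjugate to $g$ and $y \in G(\breve F)$ such that $g' = y g y^{-1}$. Then $\inv[b](g, g') = [y^{-1} b y^{\sigma}] \in B(T_g)$, and its image in $B(G)$ under $\alpha$ equals $[b]$ because $y$ itself witnesses a $\sigma$-conjugacy in $G(\breve F)$:
$$y \cdot (y^{-1} b y^{\sigma}) \cdot \sigma(y)^{-1} = b y^{\sigma} \sigma(y)^{-1} = b.$$
Hence $\inv[b](g, -)$ lands in $\alpha^{-1}(b) \subseteq B(T_g)$.

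For \emph{surjectivity}, I would take $[c] \in \alpha^{-1}(b)$ represented by an element $c \in T_g(\breve F)$. Since $\alpha([c]) = [b]$ in $B(G)$, there exists $y \in G(\breve F)$ with $y^{-1} b y^{\sigma} = c$. Define $g' := y g y^{-1}$. Then I need to verify two things. First, $g' \in G_b(F)$: using $\sigma(g) = g$, a short calculation reduces $b \sigma(g') b^{-1} = g'$ to the identity $c g c^{-1} = g$, which holds because $c \in T_g(\breve F) = \Cent(g, G)(\breve F)$. Second, $\inv[b](g, g') = [c]$: this is read off directly from the definition, using the same $y$ as a witness for $g' = y g y^{-1}$. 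Stable conjugacy of $g'$ to $g$ and regular semisimplicity are automatic, since $y \in G(\breve F) \subseteq G(\overline F)$ and these properties are invariant under $G(\overline F)$-conjugacy.

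The only subtlety worth flagging is the well-definedness of $\inv[b](g, g')$: two witnesses $y_1, y_2$ with $g' = y_i g y_i^{-1}$ differ by $t := y_2^{-1} y_1 \in T_g(\breve F)$, and this $t$ realizes a $\sigma$-conjugacy in $T_g(\breve F)$ between $y_1^{-1} b y_1^{\sigma}$ and $y_2^{-1} b y_2^{\sigma}$, so the class in $B(T_g) \cong X_*(T_g)_{\Gamma}$ is independent of the choice. Beyond this, the argument is a formal manipulation of the Kottwitz-set definitions, and I expect no serious obstacle.
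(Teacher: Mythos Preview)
Your argument is correct and follows essentially the same route as the paper's proof: both verify the factoring by observing that $y^{-1}by^{\sigma}$ is $\sigma$-conjugate to $b$ via $y$, and both prove surjectivity by lifting a class $[c] \in \alpha^{-1}(b)$ to $c \in T_g(\breve F)$, choosing $y$ with $y^{-1}by^{\sigma}=c$, and setting $g' = ygy^{-1}$. Your additional paragraph on well-definedness of $\inv[b]$ is a helpful supplement but not needed for the lemma itself.
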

			
			\begin{proof}
				First of all, note that by definition,
				$$\inv[b](g, g')=y^{-1}by^{\sigma}$$
				is $\sigma$-conjugate to $b$, hence $\alpha(\inv[b](g, g'))=b$, i.e., the map $\inv[b](g, -)$ factors through $\alpha^{-1}(b)$.
				
				It remains to show surjectivity. Indeed, let $[t] \in B(T_g)$ (choosing a lift $t \in T_g(\breve{F})$) such that $\alpha([t])=b$, i.e., there exists $h \in G(\breve{F})$ such that $hth^{-\sigma}=b$. We define
				$$g':=hgh^{-1}.$$
				Then $\inv[b](g, g')=h^{-1}bh^\sigma=t \in T_g(\breve{F})$. To show $g' \in G_b(F)$, we need to show that 
				$$g'^{\sigma}=b^{-1}g'b,$$
				which follows from $h^{-1}bh^{\sigma}=t \in T_g(\breve{F})=\Cent(g, G)(\breve{F})$. 
			\end{proof}
			
			\begin{corollary}\label{Corollary:explicit}
				For any $g \in G(F)_{\elliptic}$ and $T_g=\Cent(g, G)$ with a choice of Borel over $\overline{F}$ containing $(T_g)_{\overline{F}}$,
				\begin{equation}\label{Equation:sum_stable}
					\Theta_{\pi_0}(g)=\sum_{g' \in [[g]]}\frac{\sum_{\lambda \in X_*(T_g),\;\overline{\lambda}=\inv(g, g')}\dim V_{\mu_m}[\lambda]}{\dim V_{\mu_m}}\Theta_{\pi_0}(g'),
				\end{equation}
				where $[[g]]:=\{g' \in G(F)\;|\;g'\; \text{is conjugate to g in}\;  G(\overline{F})\}/G(F)-\text{conjugacy}$.
			\end{corollary}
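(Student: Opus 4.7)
The plan is to obtain the identity by computing $\mathcal{T}_{V_{\mu_m}}\Theta_{\pi_0}(g)$ in two independent ways and equating the results. On one hand, Equation (\ref{Eq_Hecke_HC}), already established earlier in this section, evaluates this quantity as the scalar multiple $\dim(V_{\mu_m})\Theta_{\pi_0}(g)$, reflecting the Hecke eigensheaf property of $\mathcal{F}_0$. On the other hand, Theorem \ref{Theorem_HKW} of Hansen--Kaletha--Weinstein expands the same quantity as a weighted sum of Harish-Chandra characters over stable conjugates of $g$. Matching the two expressions yields the formula.

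First, I would apply Theorem \ref{Theorem_HKW} with $b = 1$ (so $G_b(F) = G(F)$), with $\pi_b$ taken to be the restriction of $\pi_0$ to the basic stratum indexed by $b = 1$, and with $\mu = \mu_m$. This produces
\begin{equation*}
    \mathcal{T}_{V_{\mu_m}}\Theta_{\pi_0}(g) = (-1)^d \sum_{(g, g', \lambda) \in \Rel_1} \dim V_{\mu_m}[\lambda]\, \Theta_{\pi_0}(g'),
\end{equation*}
where $d = \langle \mu_m, 2\rho_G\rangle$. The scalar factor $4$ in $\mu_m = 4m\rho_G$ forces $(-1)^d = 1$, as noted in the remark following Theorem \ref{Thm_indep}.

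Next, I would re-index the sum. By Definition \ref{Defition_inv} and Lemma \ref{Lemma_inv_surj}, the invariant $\inv(g, g')$ depends only on the $G(F)$-conjugacy class of $g'$ within the stable class $[[g]]$, and the assignment $(g, g', \lambda) \mapsto ([g'], \lambda)$ identifies $\Rel_1$ with the set of pairs $([g'], \lambda)$ such that $[g'] \in [[g]]$ and $\lambda \in X_*(T_g)$ satisfies $\overline{\lambda} = \inv(g, g')$. This rewrites the single sum over $\Rel_1$ as the iterated sum appearing on the right-hand side of the statement.

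Finally, equating the two expressions for $\mathcal{T}_{V_{\mu_m}}\Theta_{\pi_0}(g)$ and dividing through by $\dim(V_{\mu_m})$ gives the desired identity. The argument is essentially bookkeeping; the only delicate step is the re-indexing of $\Rel_1$, which requires verifying that $\inv(\cdot, \cdot)$ descends to $G(F)$-conjugacy classes and that the fibers of $\inv(g, -)$ over the relevant invariants are enumerated correctly via Lemma \ref{Lemma_inv_surj}. Beyond this, no new analytic or geometric input is required; the corollary is a direct combinatorial consequence of Theorem \ref{Theorem_HKW} and Equation (\ref{Eq_Hecke_HC}).
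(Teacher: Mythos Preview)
Your proposal is correct and follows essentially the same approach as the paper: both derive the formula from Equation~(\ref{Eq_Hecke_HC}) by expanding $\mathcal{T}_{V_{\mu_m}}\Theta_{\pi_0}(g)$ via the Hansen--Kaletha--Weinstein transfer map and noting that $(-1)^d=1$ for $\mu_m=4m\rho_G$. One minor remark: the re-indexing of $\Rel_1$ is purely definitional (it is exactly how $T_{1,\mu_m}^{G\to G}$ is written in \cite[Definition~3.2.7]{hansen2022kottwitz}), so you do not actually need Lemma~\ref{Lemma_inv_surj} here---that surjectivity statement is used later for transfer between distinct inner forms, not for this corollary.
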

			
			\begin{proof}
				This is clear from Equation (\ref{Eq_Hecke_HC}) by unraveling the definition of $\mathcal{T}_{V_{\mu_m}}=T_{1, \mu_m}^{G \to G}$ (\cite[Definition 3.2.7]{hansen2022kottwitz}). Note that the sign $(-1)^d$ in \cite[Definition 3.2.7]{hansen2022kottwitz} is always $1$ in our case because we take $\mu=\mu_m=4m\rho_G$.
			\end{proof}
			
			Since the right hand side of Equation (\ref{Equation:sum_stable}) is summing over $[[g]]$, the $G(\overline{F})$-conjugacy class modulo $G(F)$-conjugacy, we see that to show $\Theta_{\pi_0}$ is stable (i.e., invariant under $G(\overline{F})$-conjugation), it suffices to show that the coefficient in front of $\Theta_{\pi_0}(g')$ is essentially a constant independent of $g' \in [[g]]$. More precisely, in later sections, we will show that the limit
			$$\lim_{m \to \infty}\frac{\sum_{\lambda \in X_*(T_g),\;\overline{\lambda}=\inv(g, g')}\dim V_{\mu_m}[\lambda]}{\dim V_{\mu_m}}$$
			exists and is independent on $g' \in [[g]]$.

			\subsection{Equi-distribution of weight multiplicities}\label{Section_equi_dist}
			
			Recall that $g \in G(F)_{\elliptic}$ and $T_g=\Cent(g, G)$ with a choice of Borel over $\overline{F}$ containing $(T_g)_{\overline{F}}$. The goal of this section is to show that the limit
			$$\lim_{m \to \infty}\frac{\sum_{\lambda \in X_*(T_g),\;\overline{\lambda}=\inv(g, g')}\dim V_{\mu_m}[\lambda]}{\dim V_{\mu_m}}$$
			exists and is independent on $g' \in [[g]]$, hence complete the proof that $\Theta_{\pi_0}$ is stable.
			
			Recall $\pi_1(G):=X^*(\widehat{T})/\Lambda_{\widehat{\Phi}} \cong X_*(T_g)/\Lambda_{\widehat{\Phi}}$. By Lemma \ref{Lemma:ker}, $\inv(g, g') $ is mapped to $1$ under the composition 
			$$B(T_g) \cong X_*(T_g)_{\Gamma} \to \pi_1(G)_{\Gamma} \cong B(G)_{\basic}.$$
			In other words,  
			$$\inv(g, g')  \in H_g:=\ker(X_*(T_g)_{\Gamma} \to \pi_1(G)_{\Gamma}).$$ To obtain our goal, we show more generally that for any $h \in H_g$,
			$$\lim_{m \to \infty}\frac{\sum_{\lambda \in X_*(T_g),\;\overline{\lambda}=h}\dim V_{\mu_m}[\lambda]}{\dim V_{\mu_m}}$$
			exists and is independent on $h \in H_g$.
			
			For any $h \in H_g$ and $m \in \mathbb{Z}_{\geq 1}$, let
			$$S_{h, m}:=\frac{\sum_{\lambda \in X_*(T_g), \overline{\lambda}=h \in H_g} \dim (V_{\mu_m}[\lambda])}{\dim V_{\mu_m}}=\frac{\sum_{\overline{\lambda}=h} \dim (V_{\mu_m}[\lambda])}{\dim V_{\mu_m}}.$$
			
			\begin{theorem}\label{Thm:equi-dist}
				For any $h, h' \in H_g=\ker(X_*(T_g)_{\Gamma} \to \pi_1(G)_{\Gamma})$,
				$$\lim_{m \to \infty}(S_{h,m}-S_{h', m})=0.$$
			\end{theorem}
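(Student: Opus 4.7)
The plan is to apply Fourier analysis on the finite abelian group $H_g$ (Lemma \ref{Lemma:fab}), using Proposition \ref{Prop:dim} as the key input. First I would record the Fourier-theoretic identity: since $\mu_m = 4m\rho_G \in \Lambda_{\widehat{\Phi}}$, every weight $\lambda$ occurring in $V_{\mu_m}$ lies in $\Lambda_{\widehat{\Phi}}$, so by Lemma \ref{Lemma:inflate} its image $\overline{\lambda}$ in $X_*(T_g)_\Gamma$ actually lies in $H_g$. Inflating a character $\chi$ of $H_g$ to a character of $\Lambda_{\widehat{\Phi}}$ as in Lemma \ref{Lemma:inflate}, and grouping weights by their image in $H_g$, one gets
\begin{equation*}
\chi(\character V_{\mu_m}) \;=\; \sum_{\lambda}\dim V_{\mu_m}[\lambda]\,\chi(\overline{\lambda}) \;=\; \dim V_{\mu_m}\cdot \widetilde S_{\chi,m}, \qquad \widetilde S_{\chi,m} := \sum_{h \in H_g}\chi(h) S_{h,m}.
\end{equation*}

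Next I would invoke Proposition \ref{Prop:dim}. The case $|\widehat{\Phi}^+|=0$ means $G$ is a torus, whence $H_g = \ker(X_*(T_g)_\Gamma \to \pi_1(G)_\Gamma)$ is trivial and the theorem is vacuous; so I assume $k := |\widehat{\Phi}^+| \geq 1$. Then $\dim V_{\mu_m} = (4m+1)^k$ grows like a polynomial of degree exactly $k$, while for every nontrivial character $\chi$ of $H_g$ the numerator $\chi(\character V_{\mu_m})$ is bounded by a polynomial of degree at most $k-1$. Dividing, I conclude
\begin{equation*}
\widetilde S_{\chi,m} \;\longrightarrow\; 0 \quad\text{as } m \to \infty, \qquad \text{for every } \chi \in \widehat{H_g} \setminus \{\mathbbm{1}\}.
\end{equation*}

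Finally I would apply Fourier inversion on $H_g$. Since $S_{h,m} = |H_g|^{-1}\sum_{\chi} \overline{\chi(h)}\,\widetilde S_{\chi,m}$, taking differences removes the trivial character's contribution, leaving
\begin{equation*}
S_{h,m} - S_{h',m} \;=\; \frac{1}{|H_g|}\sum_{\chi \neq \mathbbm{1}}\bigl(\overline{\chi(h)}-\overline{\chi(h')}\bigr)\widetilde S_{\chi,m},
\end{equation*}
a finite sum each term of which tends to $0$, which gives the theorem. The only nontrivial input is the asymptotic bound in Proposition \ref{Prop:dim}, already proved; the rest is purely formal Fourier inversion, so the main obstacle was really the careful choice of $\mu_m = 4m\rho_G$ ensuring that \emph{every} positive root contributes a genuinely decaying factor whenever $\chi$ is nontrivial.
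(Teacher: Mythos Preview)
Your proof is correct and follows essentially the same approach as the paper: both reduce to Proposition \ref{Prop:dim} after rewriting $\chi(\character V_{\mu_m})/\dim V_{\mu_m}$ as $\sum_{h}\chi(h)S_{h,m}$ for nontrivial $\chi$, and both dispose of the torus case separately. The only cosmetic difference is in the last step: you invoke explicit Fourier inversion on $H_g$ to express $S_{h,m}-S_{h',m}$ as a finite linear combination of the $\widetilde S_{\chi,m}$ with $\chi\neq\mathbbm{1}$, whereas the paper phrases the same conclusion via the observation that the vectors $(\chi(h))_{h\in H_g}$ for nontrivial $\chi$ span the zero-sum hyperplane in $\mathbb{C}^{|H_g|}$.
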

			
			\begin{proof}
				If $G$ has no roots, $G$ is a torus and $G=T_g$. Hence the map $X_*(T_g)_{\Gamma} \to \pi_1(G)_{\Gamma}$ is an isomorphism and $H_g=\{1\}$, and there is nothing to prove. So we may assume that $k=|\widehat{\Phi}| \geq 1$, i.e., the assumption of the second item of Proposition \ref{Prop:dim} is satisfied.
				
				We will show more generally that for any tuple $(c(h))_{h \in H_g} \in \mathbb{C}^{|H_g|}$ such that $\sum_{h \in H_g}c(h)=0$,
				$$\lim_{m \to \infty}\sum_{h \in H_g}c(h)S_{h, m}=0.$$
				
				For any nontrivial character $\chi$ of $H_g$, we can view it as a character of $\LPhi$ via the map $\LPhi \to H_g$ (Lemma \ref{Lemma:inflate}). Moreover, we extend linearly to view it as a map
				$$\chi: \mathbb{C}[\LPhi] \to \mathbb{C},$$
				where $\mathbb{C}[\LPhi]$ denotes the group algebra of the abelian group $\LPhi$. We shall denote the basis of the group algebra $\mathbb{C}[\LPhi]$ by $e^{\lambda}$ for $\lambda \in \LPhi$. 
				
				Note that $\character(V_{\mu_m}) \in \LPhi$. Applying $\chi$ to $\character(V_{\mu_m})$, we get
				\begin{align}\label{Eq:rearrange}
					\chi(\character(V_{\mu_m}))&\;=\chi(\sum_{h \in H_g}\sum_{\overline{\lambda}=h}\dim V_{\mu_m}e^{\lambda}) \\
					&\;=\sum_{h \in H_g}\sum_{\overline{\lambda}=h}\dim V_{\mu_m}\chi(\lambda)
					=\sum_{h \in H_g}\sum_{\overline{\lambda}=h}\dim V_{\mu_m}\chi(h).
				\end{align}
				
				So we have 
				$$\frac{\chi(\character(V_{\mu_m}))}{\dim V_{\mu_m}}=\sum_{h \in H_g}\frac{\sum_{\overline{\lambda}=h} \dim V_{\mu_m}[\lambda]}{\dim V_{\mu_m}}\chi(h)=\sum_{h \in H_g} \chi(h)S_{h, m}.$$
				
				By Proposition \ref{Prop:dim},
				$$\lim_{m \to \infty}\frac{\chi(\character(V_{\mu_m}))}{\dim V_{\mu_m}}=0,$$
				as inside the limit, the denominator is a polynomial in $m$ of degree $k$, and the numerator is bounded by a polynomial in $m$ of degree $\leq k-1$.
				
				Finally, notice that since $H_g$ is finite abelian (Lemma \ref{Lemma:fab}), the set of tuples
				$$\{(\chi(h))_{h \in H_g} \in \mathbb{C}^{|H_g|}\;|\;\chi\; \text{nontrivial character of}\;H_g\}$$
				spans the linear subspace
				$$\{(c(h))_{h \in H_g} \in \mathbb{C}^{|H_g|}\;|\;\sum_{h \in H_g}c(h)=0\}.$$ Therefore, for any tuple $(c(h))_{h \in H_g} \in \mathbb{C}^{|H_g|}$ such that $\sum_{h \in H_g}c(h)=0$,
				$$\lim_{m \to \infty}\sum_{h \in H_g}c(h)S_{h, m}=0,$$
				as desired.
			\end{proof}

			\begin{theorem}\label{Thm_indep}
				For any $h \in H_g$,
				The limit
				$$\lim_{m \to \infty}\frac{\sum_{\lambda \in X_*(T_g),\;\overline{\lambda}=h}\dim V_{\mu_m}[\lambda]}{\dim V_{\mu_m}}$$
				exists and is independent on $h \in H_g$.
			\end{theorem}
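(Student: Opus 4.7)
The statement is essentially a corollary of Theorem \ref{Thm:equi-dist} combined with a normalization identity, so the proof should be short.

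My plan is to reduce Theorem \ref{Thm_indep} to Theorem \ref{Thm:equi-dist} by exhibiting one obvious linear constraint: that the quantities $S_{h,m}$ sum to $1$. First I would check that every weight $\lambda$ of $V_{\mu_m}$ has $\overline{\lambda} \in H_g$. This is immediate because the weights of $V_{\mu_m} = V_{4m\rho_G}$ all lie in the coset $\mu_m + \Lambda_{\widehat{\Phi}} = \Lambda_{\widehat{\Phi}}$ (as $\mu_m$ itself lies in the root lattice), and by Lemma \ref{Lemma:inflate} the image of $\Lambda_{\widehat{\Phi}}$ in $X_*(T_g)_\Gamma$ is contained in $H_g$. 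Therefore, partitioning the weights by the image of $\lambda$ in $X_*(T_g)_\Gamma$ gives
\begin{equation*}
\sum_{h \in H_g} S_{h,m} \;=\; \frac{\sum_{\lambda \in X_*(T_g)} \dim V_{\mu_m}[\lambda]}{\dim V_{\mu_m}} \;=\; 1.
\end{equation*}

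Next, I would average this identity against the estimates from Theorem \ref{Thm:equi-dist}. Fix $h \in H_g$. Writing
\begin{equation*}
|H_g| \cdot S_{h,m} \;=\; \sum_{h' \in H_g} S_{h',m} \;+\; \sum_{h' \in H_g}\bigl(S_{h,m} - S_{h',m}\bigr) \;=\; 1 \;+\; \sum_{h' \in H_g}\bigl(S_{h,m} - S_{h',m}\bigr),
\end{equation*}
and applying Theorem \ref{Thm:equi-dist} to each summand in the last term, we get
\begin{equation*}
\lim_{m \to \infty} S_{h,m} \;=\; \frac{1}{|H_g|}.
\end{equation*}
This shows both that the limit exists and that it equals $1/|H_g|$ regardless of $h$.

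There is essentially no obstacle here since the substantive work has already been done in Theorem \ref{Thm:equi-dist} via the Fourier analysis on $H_g$ and the polynomial growth estimate of Proposition \ref{Prop:dim}. The only subtlety is the verification that all weights of $V_{\mu_m}$ map into $H_g$, which is why the choice $\mu_m \in \Lambda_{\widehat{\Phi}}$ was made in the first place; together with the surjectivity from Lemma \ref{Lemma:surj}, this ensures the total mass identity $\sum_h S_{h,m} = 1$ used in the averaging argument.
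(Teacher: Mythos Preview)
Your proposal is correct and follows essentially the same approach as the paper: establish the normalization $\sum_{h\in H_g} S_{h,m}=1$ and combine it with Theorem~\ref{Thm:equi-dist} to conclude $\lim_{m\to\infty} S_{h,m}=1/|H_g|$. You supply a bit more detail than the paper (justifying why all weights of $V_{\mu_m}$ land in $H_g$ and writing out the averaging explicitly); note that the surjectivity of Lemma~\ref{Lemma:surj} is not actually needed for the sum-to-one identity, only the containment from Lemma~\ref{Lemma:inflate}.
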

			
			\begin{proof}
				Note that for any $m \in \mathbb{Z}_{\geq 1}$,
				$$\sum_{h \in H_g}S_{h, m}=1.$$ Combining with Theorem \ref{Thm:equi-dist}, we see that for any $h \in H_g$,
				$$\lim_{m \to \infty}S_{h, m}=\frac{1}{|H_g|}$$ is independent of $h \in H_g$.
			\end{proof}
			
			Combing Corollary \ref{Corollary:explicit} with Theorem \ref{Thm_indep}, we get our main result, as follows. We note moreover that $\Theta_{\pi_{0}}$ is nonzero, see Lemma \ref{Lem_nonzero} below.
			
			\begin{theorem}\label{Thm_main_general}
				Let $G$ be a connected reductive group over $F$. Let $\varphi: W_F \to \widehat{G}(\Ql)$ be an elliptic $L$-parameter. For every $\pi \in \Irr_{\Ql}(G(F))$ such that $\varphi_{\pi}^{\FS}=\varphi$, let
				$$\mathcal{F}_0:=(i_{\varphi})_*\mathcal{O}\left(S_{\varphi}/Z(\widehat{G})^{\Gamma}\right)*(i_1)_!\pi, \qquad \pi_0:=i_1^*\mathcal{F}_0.$$
				Then the Harish-Chandra character $\Theta_{\pi_0}$ is stable as a non-zero function on $G(F)_{\elliptic}$, i.e., invariant under $G(\overline{F})$-conjugacy. 
				
			\end{theorem}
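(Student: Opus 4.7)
The plan is to combine the explicit weighted-sum expression of Corollary \ref{Corollary:explicit} with the equidistribution statement of Theorem \ref{Thm_indep} by passing to the limit $m \to \infty$. All the heavy machinery (the Hecke eigensheaf property of $\mathcal{F}_0$, the relative trace formula input from \cite{hansen2022kottwitz}, and the polynomial growth estimate Proposition \ref{Prop:dim}) has been assembled; what remains is the final assembly.

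Concretely, I fix $g \in G(F)_{\elliptic}$ and let $T_g = \Cent(g, G)$. By Corollary \ref{Corollary:explicit}, for every $m \geq 1$ we have
\begin{equation*}
\Theta_{\pi_0}(g) = \sum_{g' \in [[g]]} \frac{\sum_{\lambda \in X_*(T_g),\ \overline{\lambda}=\inv(g, g')} \dim V_{\mu_m}[\lambda]}{\dim V_{\mu_m}}\, \Theta_{\pi_0}(g').
\end{equation*}
The left-hand side is independent of $m$, the finite set $[[g]]$ is independent of $m$, and the values $\Theta_{\pi_0}(g')$ are independent of $m$. By Lemma \ref{Lemma:ker}, $\inv(g, g') \in H_g$ for every $g' \in [[g]]$, so Theorem \ref{Thm_indep} applies to each summand and the coefficient in front of $\Theta_{\pi_0}(g')$ converges to $1/|H_g|$ as $m \to \infty$, a value independent of $g'$. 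Since $[[g]]$ is finite, I may pass to the limit term by term, obtaining
\begin{equation*}
\Theta_{\pi_0}(g) = \frac{1}{|H_g|} \sum_{g' \in [[g]]} \Theta_{\pi_0}(g').
\end{equation*}
The right-hand side depends only on the stable conjugacy class of $g$, since both $[[g]]$ and $H_g$ are canonically attached to that class. Applying the same identity to any $g_1 \in G(F)_{\elliptic}$ stably conjugate to $g$ (for which $[[g_1]] = [[g]]$ and $H_{g_1} = H_g$ under the canonical identifications) gives $\Theta_{\pi_0}(g_1) = \Theta_{\pi_0}(g)$, which is the stability on $G(F)_{\elliptic}$.

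For the non-vanishing part of the statement, I will invoke Lemma \ref{Lem_nonzero}, which asserts that the original character $\Theta_{\pi}$ appears in $\Theta_{\pi_0}$ with a non-zero coefficient. By the standard linear independence of Harish-Chandra characters of pairwise non-isomorphic irreducible smooth representations on the regular semisimple locus, this forces $\Theta_{\pi_0}$ to be non-zero as a function. Given that the two substantive inputs (the trace identity and the equidistribution) are already in place, there is no real obstacle at this point: the only things to check are the termwise limit (immediate from finiteness of $[[g]]$), the verification that $\inv(g, g')$ lands in $H_g$ (Lemma \ref{Lemma:ker}), and the canonical identification of the Kottwitz data $H_g$ across stably conjugate tori.
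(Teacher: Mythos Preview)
Your proposal is correct and follows essentially the same route as the paper: the paper's proof of Theorem \ref{Thm_main_general} consists precisely of combining Corollary \ref{Corollary:explicit} with Theorem \ref{Thm_indep} and invoking Lemma \ref{Lem_nonzero} for non-vanishing. Your write-up in fact makes the limiting identity $\Theta_{\pi_0}(g) = \tfrac{1}{|H_g|}\sum_{g' \in [[g]]}\Theta_{\pi_0}(g')$ explicit, which the paper leaves implicit.
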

			
			\begin{remark}
				We emphasize that a prior, there might not exist $\pi \in \Irr_{\Ql}G(F)$ such that $\varphi_{\pi}^{\FS}=\varphi$. If that is the case, the theorem is empty. We expect that there always exists such $\pi$. However, the existence of such $\pi$ is not known in general.
			\end{remark}

			A prior, the Harish-Chandra character $\Theta_{\pi_0}$ might be $0$. We show that it is not the case in the following lemma.

			\begin{lemma}\label{Lem_nonzero}
				The Harish-Chandra charater $\Theta_{\pi_0}$ in Theorem \ref{Thm_main_general} is non-zero. Indeed, $\Theta_{\pi}$ occurs in $\Theta_{\pi_0}$ with a non-zero coefficient.
			\end{lemma}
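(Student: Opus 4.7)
The plan is to identify $\pi$ as a direct summand of $\pi_0$ coming from the trivial summand of the regular representation, and then to use an extra symmetry to rule out cancellation. By Lemma \ref{Lemma_O(S_phi)} together with Lemma \ref{Lemma_OSZ}, the regular representation decomposes as an $S_\varphi$-module
$$\mathcal{O}(S_\varphi/Z(\widehat{G})^\Gamma) \cong \bigoplus_W W^{\oplus \dim W},$$
with $W$ ranging over irreducible representations of the finite group $H := S_\varphi/Z(\widehat{G})^\Gamma$, and in particular the trivial representation $\mathbf{1}$ appears exactly once. Writing $W*\pi := i_1^*((i_\varphi)_*W * (i_1)_!\pi)$, the spectral action yields
$$\pi_0 \cong (\mathbf{1}*\pi) \oplus \bigoplus_{W \ne \mathbf{1}}(W*\pi)^{\oplus \dim W}.$$

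The key computation is to show $\mathbf{1}*\pi \cong \pi$, concentrated in degree zero. The object $(i_\varphi)_*\mathbf{1}$ is the structure sheaf of the closed substack $[*/S_\varphi] \hookrightarrow C_\varphi$. I would verify this isomorphism by exploiting the explicit description of $C_\varphi$ as the unramified-twist component of the stack of $L$-parameters (so that $[*/S_\varphi]$ is the unique closed $\widehat{G}$-orbit of $\varphi$) together with the compatibility of the spectral action with the map to the Bernstein center from Fargues--Scholze: the action of $(i_\varphi)_*\mathbf{1}$ on $(i_1)_!\pi$ is controlled by the formal neighborhood of $\varphi$ in $C_\varphi$, where it reduces to the identity because $\pi$ has Fargues--Scholze parameter $\varphi$. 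This yields a copy of $\pi$ in degree zero inside $\pi_0$, contributing $+1$ to the coefficient of $\Theta_\pi$ in $\Theta_{\pi_0}$.

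The main obstacle is to guarantee that this contribution is not cancelled by (possibly degree-shifted) copies of $\pi$ occurring in the complementary summands $W*\pi$ for $W \ne \mathbf{1}$. My approach is to use the commuting right $H$-action on $\pi_0$ coming from the bimodule decomposition $\mathcal{O}(H) \cong \bigoplus_W W \boxtimes W^*$: the right $H$-action commutes with the spectral action and hence with the $G(F)$-action on $\pi_0$, and the summand $\mathbf{1}*\pi$ is precisely the $H$-invariant part while each $W*\pi$ for $W\ne\mathbf{1}$ has no nonzero $H$-invariant. Viewing $\mathrm{RHom}_{G(F)}(\pi,\pi_0)$ as a virtual $H$-representation whose $H$-invariant part equals $\mathrm{RHom}_{G(F)}(\pi,\mathbf{1}*\pi) = \overline{\mathbb{Q}}_\ell$, I would deduce from the structure theory of Proposition \ref{Proposition_elliptic} (semisimplicity) combined with the uniform $H$-equivariance of the multiplicity spaces of $\pi$ in each cohomological degree that the trivial-isotypic $+1$ contribution persists in the full Euler characteristic, yielding the desired non-vanishing of the coefficient of $\Theta_\pi$ in $\Theta_{\pi_0}$.
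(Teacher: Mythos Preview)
Your identification of $\pi$ as a degree-zero summand of $\pi_0$ via the trivial summand $\mathbf{1} \subset \OSZ$ matches the paper's starting point, but your method for ruling out cancellation has a genuine gap. You correctly observe that the right $H$-action on $\OSZ$ makes each multiplicity space $M_n := \Hom_{G(F)}(\pi, H^n(\pi_0))$ an $H$-module with $M_n^H = 0$ for $n \neq 0$ and $M_0^H = \Ql$. But the coefficient of $\Theta_\pi$ in $\Theta_{\pi_0}$ is the virtual dimension $\sum_n (-1)^n \dim M_n$, not the dimension of the invariant part of the virtual $H$-representation $M := \sum_n (-1)^n [M_n]$. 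Knowing $M^H = [\mathbf{1}]$ does not force $\dim M \neq 0$: if $H$ has a nontrivial one-dimensional character $W$, then the configuration $M_0 = \mathbf{1}$, $M_1 = W$, $M_n = 0$ for $n\neq 0,1$ is compatible with everything you have established, yet gives $\dim M = 0$. Your phrase ``uniform $H$-equivariance of the multiplicity spaces'' does not supply any constraint that excludes this.

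The paper instead uses an elementary bootstrap that bypasses the $H$-action entirely. Suppose $\pi[d_0]$ is a summand of $\pi_0 = \OSZ*\pi$ for some $d_0 \neq 0$. Applying $\OSZ*(-)$ and using semisimplicity (Proposition~\ref{Proposition_elliptic}), one obtains $\pi_0[d_0]$, hence $\pi[2d_0]$, as a summand of $\OSZ^{\otimes 2}*\pi$; iterating, $\pi[nd_0]$ is a summand of $\OSZ^{\otimes n}*\pi$ for every $n \geq 1$. But Lemma~\ref{Lemma:regular_rep} applied to $S_\varphi/\ZG$ gives $\OSZ^{\otimes n} \cong \OSZ^{\oplus e^{n-1}}$ with $e = \dim\OSZ$, so $\OSZ^{\otimes n}*\pi \cong \pi_0^{\oplus e^{n-1}}$ has cohomological amplitude bounded independently of $n$, contradicting $|nd_0| \to \infty$. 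Hence $\pi$ occurs in $\pi_0$ only in degree zero, and its coefficient in $\Theta_{\pi_0}$ is strictly positive.
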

			
			\begin{proof}
				Recall that $\pi_0=\OSZ*\pi$ is a finite direct sum of irreducible representations up to degree shifts. Note that $\pi=\operatorname{triv}\;*\;\pi$ occurs in $\pi_{0}$, where $\operatorname{triv}$ denotes the trivial representation of $S_{\varphi}$.
				Since the Harish-Chandra characters for different irreducible representations are linearly independent, the only possibility for $\Theta_{\pi_0}$ to be $0$ is that $\pi$ also occurs in some non-zero degree. 
				
				We shall prove this is not the case. By contradiction, $\pi[d_0]$ occurs in $\pi_0$ for some integer $d_0 \neq 0$. Then if we act by $\OSZ$ again we see that $\pi[nd_0]$ will appear in $\OSZ^{\otimes n} * \pi$ for any $n \in \mathbb{Z}_{\geq 1}$. However, by Lemma \ref{Lemma:regular_rep}, $$\OSZ^{\otimes n}=\OSZ^{\oplus e^{(n-1)}},$$ where $e=\dim \OSZ$. Thus $\OSZ^{\otimes n} * \pi$ has bounded degree uniform in $n$, which contradicts with $\pi[nd_0]$ appearing in $$\OSZ^{\otimes n} * \pi$$
				 for any $n \in \mathbb{Z}_{\geq 0}$.
			\end{proof}
			
			It was pointed out to us by David Hansen and later also by Wen-Wei Li that for elliptic virtual characters (including unitary discrete series virtual characters) in the sense of \cite{arthur1996local}, stability on $G(F)_{\elliptic}$ implies stability, as follows.
			
			\begin{theorem}\label{Thm_main_distribution}
				Under the same assumptions of Theorem \ref{Thm_main_general}, the Harish-Chandra character $\Theta_{\pi_0}$ is a nonzero stable distribution on $G(F)$.
			\end{theorem}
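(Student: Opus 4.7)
The plan is to deduce Theorem \ref{Thm_main_distribution} from Theorem \ref{Thm_main_general} by invoking Theorem 6.1 of \cite{arthur1996local}. First, I observe that by Proposition \ref{Proposition_elliptic}, the derived representation $\pi_0$ is, up to cohomological degree shifts, a finite direct sum of irreducible supercuspidal representations of $G(F)$, each with Fargues--Scholze $L$-parameter $\varphi$. By Definition \ref{Def_HC}, $\Theta_{\pi_0}$ is therefore a finite $\mathbb{Z}$-linear combination of Harish-Chandra characters of irreducible supercuspidal representations. Since $F$ has characteristic zero by hypothesis, Harish-Chandra's theorem realizes each such character as an invariant distribution represented by a locally integrable function, so $\Theta_{\pi_0}$ itself defines a distribution on $G(F)$ in the standard way.

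Next, I would identify $\Theta_{\pi_0}$ as an element of Arthur's space $I_{\operatorname{ell}}(G)$ of elliptic tempered virtual characters from \cite{arthur1996local}. Supercuspidal representations are tempered (indeed, discrete series) and do not occur as subquotients of representations parabolically induced from any proper Levi subgroup, which is precisely the ellipticity condition in Arthur's framework. Hence $\Theta_{\pi_0}$ lies in $I_{\operatorname{ell}}(G)$ as a $\mathbb{Z}$-linear combination of elliptic tempered characters.

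The key input is then Arthur's Theorem 6.1, which asserts that an elliptic tempered virtual character defines a stable distribution on $G(F)$ if and only if its restriction to $G(F)_{\elliptic}$ is invariant under stable conjugacy. The nontrivial direction is that stability on the elliptic regular semisimple locus propagates to all of $G(F)$, using parabolic descent and the fact that tempered characters on non-elliptic regular semisimple elements are controlled by constant terms along proper Levis. Applying this to $\Theta_{\pi_0}$, whose restriction to $G(F)_{\elliptic}$ is stable by Theorem \ref{Thm_main_general}, immediately yields stability as a distribution on $G(F)$. The nonvanishing follows from Lemma \ref{Lem_nonzero} together with the linear independence of Harish-Chandra characters of inequivalent irreducible representations as distributions.

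The main obstacle is verifying cleanly that $\Theta_{\pi_0}$ belongs to Arthur's space $I_{\operatorname{ell}}(G)$, which amounts to matching conventions between the two frameworks; this rests on the supercuspidality of the constituents of $\pi_0$ established in Proposition \ref{Proposition_elliptic} through the compatibility of Fargues--Scholze parameters with parabolic induction (Lemma \ref{Lemma_parabolic}). Once this identification is made, Arthur's theorem performs the essential analytic work of extending stability from the elliptic locus to a global stable distribution, and no further input specific to the Fargues--Scholze setting is required.
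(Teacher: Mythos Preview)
Your approach is essentially the same as the paper's: reduce to Arthur's Theorem 6.1 via the supercuspidality established in Proposition \ref{Proposition_elliptic}, and use Lemma \ref{Lem_nonzero} for nonvanishing. One small point worth tightening: the assertion that supercuspidal representations are tempered (discrete series) is only correct when the central character is unitary, which is not guaranteed here; the paper handles this by noting that all constituents of $\pi_0$ share the same central character (Proposition \ref{Proposition_elliptic}), so after twisting by a suitable character of $G(F)$ one obtains a unitary discrete series virtual character to which Arthur's framework applies.
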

			
			\begin{proof}
			 By the proof of Proposition \ref{Proposition_elliptic}, the irreducible subquotients of $\pi_{0}$ are (up to degree shifts) supercuspidal representations with the same central character. Therefore,  $\Theta_{\pi_{0}}$ is a discrete series virtual character. Up to twist by a character of $G(F)$, $\Theta_{\pi_{0}}$ is a unitary discrete series virtual character, hence a elliptic virtual character in the sense of \cite{arthur1996local}. Now the theorem follows from \cite[Theorem 6.1]{arthur1996local}. See also \cite[Th. XI.3]{moeglin2016Stabilisation} and \cite[Proposition 3.2.6]{varmasome}.
			\end{proof}

			\subsection{Transfer between extended pure inner forms}

			Recall that we have proved in Theorem \ref{Thm_main_general} that the values of $\Theta_{\pi_0}$ on two stably conjugated elliptic elements of $G(F)$ are the same. In fact, we can even relate the character values of two stably conjugated elliptic elements of different extended pure inner forms.
			
			Recall that 
			$$\mathcal{F} :=\OS * \pi \cong \bigoplus_{\chi \in X^*(Z(\widehat{G})^{\Gamma})}(i_{\varphi})_*\mathcal{O}(S_{\varphi})_{\chi}*(i_1)_!\pi = \bigoplus_{\chi \in X^*(Z(\widehat{G})^{\Gamma})}\mathcal{F}_{\chi},$$
			where $\mathcal{F}_{\chi}:=(i_{\varphi})_*\mathcal{O}(S_{\varphi})_{\chi}*(i_1)_!\pi $
			is supported on $\Bun_G^{-\chi}$ by Lemma \ref{Lemma_pi_1}. 
			
			We shall first extend Lemma \ref{Lemma_first_eq} to arbitrary $\mu \in X_*$.	
			\begin{lemma}For  any $\mu \in X_*$,
				$$\mathcal{T}_{V_{\mu}}\Theta_{\mathcal{F}}=\Theta_{T_{V_{\mu}^{\vee}}\mathcal{F}}$$
				 as elements in $\prod_{b \in B(G)_{\basic}} C_c^{\infty}(G_b(F)_{\elliptic})$.
			\end{lemma}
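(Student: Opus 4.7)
The plan is to prove the equality stratum-by-stratum on $\prod_{b \in B(G)_{\basic}} C_c^{\infty}(G_b(F)_{\elliptic})$, generalizing the argument of Lemma \ref{Lemma_first_eq} by tracking the duality $V_\mu$ versus $V_\mu^\vee$ explicitly. Lemma \ref{Lemma_first_eq} relied on the self-duality of $V_{\mu_m}$ (Lemma \ref{Lemma_selfdual}) to identify these; without self-duality, we carry both through the argument, with $V_\mu^\vee$ appearing on the Hecke side and $V_\mu$ governing the weight multiplicities on the character-transfer side.

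First, I would note that since $\mathcal{F} \in \operatorname{D}^{C_\varphi}(\Bun_G, \Ql)^{\omega}$ and the Hecke action is compatible with the spectral action (Lemma \ref{Lemma_Spectral_Hecke}), both $\mathcal{F}$ and $T_{V_\mu^\vee}\mathcal{F}$ are supported on the basic locus (Lemma \ref{Lemma:support}). Write $\mathcal{F} = \bigoplus_{b' \in B(G)_{\basic}} i_{b',*}\mathcal{F}_{b'}$ with $\mathcal{F}_{b'} := i_{b'}^*\mathcal{F}$, a finite direct sum of irreducible $G_{b'}(F)$-representations up to degree shifts (Proposition \ref{Proposition_elliptic}). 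Both sides of the claimed equality then decompose along $b \in B(G)_{\basic}$, and it is enough, for each fixed $b$, to match the $b$-components as functions on $G_b(F)_{\elliptic}$.

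Next, for each pair $(b, b')$ of basic elements I would identify $i_b^* T_{V_\mu^\vee} i_{b',*}\mathcal{F}_{b'}$ with an appropriate Mantovan-type virtual representation attached to a shtuka space running between the strata $b$ and $b'$, using the geometric description of Hecke operators from \cite[Theorem 6.4.5]{hansen2022kottwitz}. Then \cite[Theorem 6.5.2]{hansen2022kottwitz} (i.e., Theorem \ref{Theorem_HKW}) computes the Harish-Chandra character of this piece on $G_b(F)_{\elliptic}$ as the weight-multiplicity-weighted sum $T_{b',\mu}^{G_{b'} \to G_b}\Theta_{\mathcal{F}_{b'}}$, in which the weights of $V_\mu$ (not $V_\mu^\vee$) appear. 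Summing over $b'$ and reading the result as the $b$-component of $\mathcal{T}_{V_\mu}\Theta_{\mathcal{F}}$, interpreted in the extended-inner-form setting via the matrix of transfer maps $(T_{b',\mu}^{G_{b'} \to G_b})_{b,b'}$, completes the proof.

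The main obstacle is not conceptual but rather notational and sign bookkeeping: one must fix a convention defining $\mathcal{T}_{V_\mu}$ as the linear operator on $\prod_b C_c^{\infty}(G_b(F)_{\elliptic})$ assembled from the transfer maps between all extended pure inner forms, and verify that the sign $(-1)^{\langle \mu, 2\rho_G\rangle}$ together with the pairing of $V_\mu^\vee$ on the Hecke side against $V_\mu$ on the character side are consistent across all pairs of strata. Once this bookkeeping is in place, the proof is a direct application of the two HKW theorems, just as in Lemma \ref{Lemma_first_eq}.
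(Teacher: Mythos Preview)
Your approach is the same as the paper's: decompose $\mathcal{F}$ along the source stratum (the paper indexes by $\chi \in X^*(\ZG)$ via $\mathcal{F} = \bigoplus_\chi \mathcal{F}_\chi$, which is equivalent to your indexing by $b'$ since $\mathcal{F}_\chi$ is supported on $\Bun_G^{-\chi}$) and then invoke \cite[Theorems 6.4.5 and 6.5.2]{hansen2022kottwitz} for each target component.

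One point you gloss over: those HKW results are stated under the hypothesis $b \in B(G, \mu)$, so your sentence ``for each pair $(b, b')$ of basic elements I would identify $i_b^* T_{V_\mu^\vee} i_{b',*}\mathcal{F}_{b'}$ with an appropriate Mantovan-type virtual representation'' is too broad. The paper explicitly splits off the case $b \notin B(G, \mu)$ and checks that both sides vanish there separately, using \cite[Lemma 3.2.8]{hansen2022kottwitz} for the character-transfer side and Lemma \ref{Lemma_pi_1} (compatibility of Hecke operators with the $\pi_1(G)_\Gamma$-grading) for the Hecke side. This is precisely the bookkeeping you anticipated in your final paragraph; the only thing to add is naming these two specific inputs.
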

			\begin{proof}
				It suffices to prove the similar equality for $\mathcal{F_{\chi}}$ instead of $\mathcal{F}$ for each $\chi \in X^*(\ZG)$. The equality holds in the component $C_c^{\infty}(G_{b-\chi}(F)_{\elliptic})$ for $b \in B(G, \mu)$ by \cite[Lemma 6.4.5 and Lemma 6.5.2]{hansen2022kottwitz}. For $b \notin B(G, \mu)$, the image of both sides in $C_c^{\infty}(G_{b-\chi}(F)_{\elliptic})$ is zero by  \cite[Lemma 3.2.8]{hansen2022kottwitz} and Lemma \ref{Lemma_pi_1}.
			\end{proof}
			
			$\mathcal{F}$ is a Hecke eigensheaf by Proposition \ref{Prop:Hecke_eigen}.
			Let $g \in G(F)_{\elliptic}$ and $T_g=\Cent(g, G)$ with a chosen Borel over $\overline{F}$ containing $(T_g)_{\overline{F}}$. For any $\chi \in X^*(\ZG)$, let $\mu \in X^*(\widehat{T}) \cong X_*(T_g)$ such that its image under
			$$X_*(T_g) \to X_*(T_g)_{\Gamma} \cong B(T_g) \to B(G)$$
			is $\chi$ (such $\mu$ exists by Theorem \ref{Thm_factor_basic}).

			We have
			\begin{equation}\label{Eq_inner_form}
				\mathcal{T}_{V_{\mu}}\Theta_{\mathcal{F}}=\Theta_{T_{V_{\mu}^{\vee}}\mathcal{F}}=\dim(V_{\mu}) \Theta_{\mathcal{F}}.
			\end{equation}
			Since for any $\chi' \in X^*(\ZG)$,
			$\mathcal{T}_{V_{\mu}}$
			transfers  functions on $G_{\chi'}(F)_{\elliptic}$ to  functions on $G_{\chi'-\chi}(F)_{\elliptic}$, if we look at the $b=-\chi$ part of Equation (\ref{Eq_inner_form}), we have
			\begin{equation}
				\mathcal{T}_{V_{\mu}}\Theta_{\mathcal{F}_0}=\dim(V_{\mu}) \Theta_{\mathcal{F}_{\chi}}.
			\end{equation}
			Now, for any $g_{-\chi} \in G_{-\chi}(F)_{\elliptic}$ and $T_{g_{-\chi}}:=\Cent(g_{-\chi}, G_{-\chi})$ with a chosen Borel over $\overline{F}$ containing $(T_{g_{-\chi}})_{\overline{F}}$, \begin{equation}\label{Equation_T_V_mu}
				(\mathcal{T}_{V_{\mu}}\Theta_{\mathcal{F}_0})(g_{-\chi}) = (-1)^d\sum_{g'}\sum_{\lambda}\dim V_{\mu}[\lambda]\Theta_{\mathcal{F}_0}(g')
			\end{equation} 
			where $d=\langle \mu, 2\rho_{G}\rangle$, $g'$ runs over elements in $G(F)$ (up to $G(F)$-conjugacy) that are stably conjugate to $g_{-\chi}$, and $\lambda$ runs over $X_*(T_{g_{-\chi}})$ such that $\overline{\lambda}=\inv[\chi](g_{-\chi}, g') \in X_*(T_{g_{-\chi}})_{\Gamma}$. Since we have shown that $\Theta_{\mathcal{F}_0}$ is stable, the terms
			$$\Theta_{\mathcal{F}_0}(g')$$
			in Equation (\ref{Equation_T_V_mu}) are independent on $g'$. Moreover, note that when $g' \in G(F)$ runs over elements that are stably conjugate to the given element $g_{-\chi} \in G_{-\chi}(F)_{\elliptic}$, $\lambda$ runs over elements in $X_*(T_{g_{-\chi}})$ such that $\dim(V_{\mu}[\lambda]) \neq 0$, as follows. 
			
			\begin{lemma}\label{Lemma_dim_neq_0}
				Let $\mu \in X^*(\widehat{T}) \cong X_*(T_g)$ such that its restriction to $Z(\widehat{G})^{\Gamma}$ is $\chi \in X^*(Z(\widehat{G})^{\Gamma}) \cong \pi_1(G)_{\Gamma}$. Let $\lambda \in X^*(\widehat{T}) \cong X_*(T_{g_{-\chi}})$ such that $\dim(V_{\mu}[\lambda]) \neq 0$, then there is some $g' \in G(F)$ stably conjugate to $g_{-\chi}$ such that 
				$$\inv[\chi](g_{-\chi}, g')=\overline{\lambda} \in X_*(T_{g_{-\chi}})_{\Gamma}.$$
			\end{lemma}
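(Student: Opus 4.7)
The plan is to reduce the statement to (a direct generalization of) Lemma \ref{Lemma_inv_surj}. The key preliminary observation is a constraint on $\overline{\lambda}$ forced by the hypothesis that $\lambda$ is a weight of $V_\mu$, namely that $\overline{\lambda}$ must already land in the appropriate fiber over $\chi$; once that is established, the realization of $\overline{\lambda}$ as an $\inv$-invariant is a direct application of the surjectivity statement.

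First, I would observe that any weight $\lambda$ of $V_\mu$ differs from the highest weight $\mu$ by an element of the root lattice $\Lambda_{\widehat{\Phi}} \subseteq X^*(\widehat{T})$. Consequently, $\mu$ and $\lambda$ have the same image in
$$\pi_1(G) \cong X^*(\widehat{T})/\Lambda_{\widehat{\Phi}},$$
and hence also the same image in $\pi_1(G)_{\Gamma} \cong X^*(\ZG)$. By hypothesis, the image of $\mu$ in $X^*(\ZG)$ is $\chi$, so $\overline{\lambda} \in X_*(T_{g_{-\chi}})_{\Gamma} \cong B(T_{g_{-\chi}})$ maps to $\chi$ under the natural map $B(T_{g_{-\chi}}) \to B(G)_{\basic}$. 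This is exactly the necessary compatibility: were $g' \in G(F)$ stably conjugate to $g_{-\chi}$ with $\inv[\chi](g_{-\chi}, g') = \overline{\lambda}$, then $\overline{\lambda}$ would have to lie in the preimage of $\chi$ (cf.\ the first half of the proof of Lemma \ref{Lemma_inv_surj}).

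Second, I would invoke the direct analogue of Lemma \ref{Lemma_inv_surj} in which the base group is $G_{-\chi}$ rather than $G$, using the canonical identification of the extended pure inner form of $G_{-\chi}$ indexed by $\chi$ with $G$ itself. Concretely, choose a lift $t \in T_{g_{-\chi}}(\breve{F})$ of $\overline{\lambda}$; since the image of $[t] \in B(T_{g_{-\chi}})$ in $B(G_{-\chi}) = B(G)$ equals $\chi$, there is $h \in G(\breve{F})$ with $h t h^{-\sigma_{-\chi}}$ a representative of $\chi$, where $\sigma_{-\chi}$ denotes the twisted Frobenius defining $G_{-\chi}$. Setting
$$g' := h g_{-\chi} h^{-1},$$
the same computation as in Lemma \ref{Lemma_inv_surj} shows that $g' \in G(F)$ (because $h t h^{-\sigma_{-\chi}} \in T_{g_{-\chi}}(\breve{F}) = \Cent(g_{-\chi}, G)(\breve{F})$), that $g'$ is stably conjugate to $g_{-\chi}$, and that $\inv[\chi](g_{-\chi}, g') = [t] = \overline{\lambda}$.

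The main obstacle will be the bookkeeping of $\sigma$-twists when the base of the $\inv$ construction is an arbitrary extended pure inner form $G_{-\chi}$ rather than the quasi-split $G$ itself. One must verify that the surjectivity argument of Lemma \ref{Lemma_inv_surj} does extend verbatim under the identification of the $\chi$-twist of $G_{-\chi}$ with $G$, and in particular that the candidate $g' = h g_{-\chi} h^{-1}$ is fixed by the Frobenius of $G$ (and not merely by some auxiliary twisted Frobenius). This is essentially the same type of inner twisting bookkeeping that justifies Lemma \ref{Lemma_inv_surj}, and should cause no real difficulty provided one tracks the normalizations carefully.
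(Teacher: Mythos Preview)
Your proposal is correct and follows essentially the same approach as the paper: first use that weights of $V_\mu$ differ from $\mu$ by elements of the root lattice to conclude that $\overline{\lambda}$ maps to $\chi$ in $B(G)_{\basic}$, then invoke the surjectivity statement of Lemma \ref{Lemma_inv_surj}. The paper simply cites Lemma \ref{Lemma_inv_surj} directly (via the identification $B(G_{-\chi}) \cong B(G)$) without spelling out the $\sigma$-twist bookkeeping you flag; your version is slightly more explicit on that point, but the substance is identical.
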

			
			\begin{proof}
				Recall that we have maps 
				$$X_*(T_{g_{-\chi}})_{\Gamma} \cong B(T_{g_{-\chi}}) \to B(G_{-\chi}) \cong B(G),$$
				where the image of the map in the middle is  $B(G)_{\basic}$ by Theorem \ref{Thm_factor_basic}. By Lemma \ref{Lemma_inv_surj}, it suffices to show that the image of $\lambda$ in $B(G)_{\basic}$ is $\chi \in X^*(Z(\widehat{G})^{\Gamma})$. Indeed, by highest weight theory, $\dim(V_{\mu}[\lambda]) \neq 0$ implies that the image of $\lambda$ and $\mu$ in $X^*(\widehat{T})/\Lambda_{\widehat{\Phi}} \cong \pi_1(G)$ are the same. Therefore, the image of $\lambda$ and $\mu$ in $\pi_1(G)_{\Gamma} \cong B(G)_{\basic} \cong X^*(Z(\widehat{G})^{\Gamma})$ are the same, which is $\chi$ by the assumption on $\mu$.
			\end{proof}

			So it follows from Equation (\ref{Equation_T_V_mu}) that for any $g' \in G(F)$ stably conjugate to $g_{-\chi} \in G_{-\chi}(F)_{\elliptic}$, we have
			\begin{equation}
				\Theta_{\mathcal{F}_0}(g')=(-1)^d\Theta_{\mathcal{F}_{\chi}}(g_{-\chi}),
			\end{equation}
			where $d=\langle \mu, 2\rho_{G}\rangle$. Note that $(-1)^d$ is independent of the choice of $\mu$ lifting $\chi$ because any two lifts differ by a $\mathbb{Z}$-linear combination of simple roots, and $\langle \alpha, 2\rho_{G}\rangle$ is even for any simple root $\alpha$ (\cite[13.1, Lemma A]{humphreys1972introduction}).
			In other words, we have shown the following corollary of Theorem \ref{Thm_main_general}.
			
			\begin{corollary}
				Let $G$ be a connected reductive group over $F$ and $\chi \in X^*(\ZG)$. Let $\varphi: W_F \to \widehat{G}(\Ql)$ be an elliptic $L$-parameter. For every $\pi \in \Irr_{\Ql}(G(F))$ such that $\varphi_\pi^{\FS}=\varphi$, let
				$$\mathcal{F}_{\chi}:=(i_{\varphi})_*\OS_{\chi}*(i_1)_!\pi, \qquad \pi_{\chi}:=i_{-\chi}^*\mathcal{F}_{\chi},$$
				where $i_{-\chi}$ denotes the open immersion $\Bun_G^{-\chi} \subseteq \Bun_G$. Given $g_{-\chi} \in G_{-\chi}(F)_{\elliptic}$ and $g' \in G(F)_{\elliptic}$ such that $g_{-\chi}$ and $g'$ are stably conjugate, we have
				$$\Theta_{\mathcal{F}_0}(g')=(-1)^d\Theta_{\mathcal{F}_{\chi}}(g_{-\chi}),$$
				where $d=\langle \mu, 2\rho_{G}\rangle$ for any $\mu \in X^*(\widehat{T})$ lifting $\chi$.
				
			\end{corollary}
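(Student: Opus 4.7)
The plan is to extend the argument that worked for $\mathcal{F}_0$ in Theorem \ref{Thm_main_general} to all of the sheaves $\mathcal{F}_\chi$ by exploiting the Hecke eigensheaf property of the full sum $\mathcal{F} = \OS * \pi$ (Proposition \ref{Prop:Hecke_eigen}) and tracking how the Hecke operators shift between strata. First, for any $\mu \in X^*(\widehat{T}) \cong X_*(\Tuniv)$ lifting $\chi \in X^*(\ZG) \cong \pi_1(G)_\Gamma$ (such $\mu$ exists by Theorem \ref{Thm_factor_basic} together with Lemma \ref{Lemma:ZG}), I would combine Proposition \ref{Prop:Hecke_eigen} with the extension of Lemma \ref{Lemma_first_eq} to general $\mu$ to obtain the identity
\begin{equation*}
  \mathcal{T}_{V_\mu}\Theta_{\mathcal{F}} = \Theta_{T_{V_\mu^\vee}\mathcal{F}} = \dim(V_\mu)\,\Theta_{\mathcal{F}}.
\end{equation*}
Because $\mathcal{F}_{\chi'}$ is supported on $\Bun_G^{-\chi'}$ (Lemma \ref{Lemma_pi_1}) and $T_{V_\mu^\vee}$ shifts gradings by the class of $\mu$ in $\pi_1(G)_\Gamma$, which is $\chi$, the component of this equality on $\Bun_G^{-\chi}$ reads
\begin{equation*}
  \mathcal{T}_{V_\mu}\Theta_{\mathcal{F}_0} = \dim(V_\mu)\,\Theta_{\mathcal{F}_\chi}
\end{equation*}
as functions on $G_{-\chi}(F)_{\elliptic}$.

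Next I would unravel the definition of $\mathcal{T}_{V_\mu} = T_{1,\mu}^{G\to G_{-\chi}}$ at a point $g_{-\chi} \in G_{-\chi}(F)_{\elliptic}$: by Theorem \ref{Theorem_HKW} this expresses the left-hand side as
\begin{equation*}
  (-1)^d \sum_{g'} \sum_{\lambda} \dim V_\mu[\lambda]\,\Theta_{\mathcal{F}_0}(g'),
\end{equation*}
where $g'$ runs over representatives in $G(F)$ for the $G(F)$-conjugacy classes that are stably conjugate to $g_{-\chi}$, and $\lambda \in X_*(T_{g_{-\chi}})$ ranges over weights with $\overline{\lambda} = \inv[\chi](g_{-\chi}, g')$. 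Here is the first place where Theorem \ref{Thm_main_general} is crucial: the factor $\Theta_{\mathcal{F}_0}(g')$ depends only on the stable class of $g_{-\chi}$, so it pulls out of the sums.

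The remaining step is to show the double sum of weight multiplicities collapses to $\dim V_\mu$. The surjectivity statement in Lemma \ref{Lemma_inv_surj}, combined with Lemma \ref{Lemma_dim_neq_0}, shows precisely that as $g'$ varies over stable conjugates and $\lambda$ varies over weights with $\overline{\lambda} = \inv[\chi](g_{-\chi}, g')$, the set of $\lambda$ that appear exactly sweeps out all weights of $V_\mu$ with nonzero multiplicity (since every such weight has image $\chi$ in $\pi_1(G)_\Gamma \cong B(G)_{\basic}$ by highest weight theory). Therefore the double sum equals $\sum_\lambda \dim V_\mu[\lambda] = \dim V_\mu$, and dividing both sides by $\dim V_\mu$ gives $\Theta_{\mathcal{F}_\chi}(g_{-\chi}) = (-1)^d \Theta_{\mathcal{F}_0}(g')$.

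I expect the main technical obstacle to be checking that $(-1)^d$ is independent of the chosen lift $\mu$ of $\chi$; the way to handle it is to note that two lifts differ by an element of $\Lambda_{\widehat{\Phi}}$, and that $\langle \alpha, 2\rho_G\rangle \in 2\mathbb{Z}$ for every simple root $\alpha$ (for instance by \cite[13.1, Lemma A]{humphreys1972introduction}), so $(-1)^{\langle\mu,2\rho_G\rangle}$ depends only on the image of $\mu$ in $X^*(\widehat{T})/\Lambda_{\widehat{\Phi}}$. The rest is bookkeeping with the $\pi_1(G)_\Gamma$-grading of $\Bun_G$ and the compatibility of the spectral action with the isotypic decomposition (\ref{Eq:res_to_Z}).
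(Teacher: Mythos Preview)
Your proposal is correct and follows essentially the same approach as the paper: both exploit the Hecke eigensheaf property of $\mathcal{F}$ together with the $\pi_1(G)_\Gamma$-grading to reduce to the identity $\mathcal{T}_{V_\mu}\Theta_{\mathcal{F}_0} = \dim(V_\mu)\,\Theta_{\mathcal{F}_\chi}$, then invoke the stability of $\Theta_{\mathcal{F}_0}$ (Theorem \ref{Thm_main_general}) and Lemma \ref{Lemma_dim_neq_0}/Lemma \ref{Lemma_inv_surj} to collapse the double sum of weight multiplicities to $\dim V_\mu$, and finally verify independence of $(-1)^d$ on the lift $\mu$ via $\langle \alpha, 2\rho_G\rangle \in 2\mathbb{Z}$ for simple roots.
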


			\bibliographystyle{alpha}
			\bibliography{reference}

		\end{document}